\renewcommand\eqref[1]{(\ref{#1})} 
\newtheorem{defi}{Definition}[section]
\newtheorem{exa}[defi]{Example}
\newtheorem{lema}[defi]{Lemma}
\newtheorem{teo}[defi]{Theorem}
\newtheorem{rem}[defi]{Remark}
\newtheorem*{asu}{Assumption}
\newtheorem{coro}[defi]{Corollary}
\newtheorem{pro}[defi]{Proposition}
\newtheorem*{rem*}{Remark}
\newcommand{\Ombar}{\overline{ \Omega}}
\newcommand{\interior}[1]{%
  {\kern0pt#1}^{\mathrm{o}}%
}
\newcommand{\MO}{\mathfrak{L}}
\newcommand{\I}{\mathcal{I}}
\newcommand{\T}{\mathbb{T}}
\newcommand{\Td}{\mathbb{T}^d}
\newcommand{\C}{\mathbb{C}}
\newcommand{\R}{\mathbb{R}}
\newcommand{\N}{\mathbb{N}}
\newcommand{\Z}{\mathbb{Z}}
\newcommand{\norm}[1]{\left\lVert#1\right\rVert}
\newcommand{\esp}{\text{  }}
\begin{document}

\title[NONHARMONIC ANALYSIS, SOME SPECTRAL PROPERTIES]
 {NONHARMONIC GOHBERG'S LEMMA, GERSHGORIN THEORY AND HEAT EQUATION ON MANIFOLDS WITH BOUNDARY}


\author[M. Ruzhansky]{Michael Ruzhansky}

\address{Department of Mathematics: Analysis, Logic and Discrete Mathematics
	\\
	Ghent University, Belgium
	\\
	and
	\\
	School of Mathematical Sciences
		\\ Queen Mary University of London 
			\\
		United Kingdom
		}

\email{Michael.Ruzhansky@ugent.be}

\author[Juan Pablo Velasquez-Rodriguez]{Juan Pablo Velasquez-Rodriguez}

\address{Department of Mathematics\\
Universidad del Valle\\
Calle 13 No 100-00, Cali\\
Colombia}

\email{velasquez.juan@correounivalle.edu.co}


\subjclass[2010]{Primary; 58J40; Secondary: 47A10. }

\keywords{Non-harmonic analysis, Spectral theory, Pseudo-differential operators, Compact operators, Gershgorin theory, Fourier Analysis}

\thanks{The 
	authors were supported by the EPSRC Grant 
	EP/R003025/1, by the Leverhulme Research Grant RPG-2017-151, and by the FWO Odysseus grant.}
	
\date{\today}
\begin{abstract}
In this paper, 
we use Operator Ideals Theory and  Gershgorin Theory to obtain explicit information concerning the spectrum of pseudo-differential operators, on a smooth manifold $\Omega$ with boundary $\partial \Omega$, in the context of the non-harmonic analysis of boundary value problems, introduced in \cite{N-H.AnalysisRT1} in terms of a model operator $\MO $. Under certain assumptions about the eigenfunctions of the model operator, for symbols in the Hörmander class $S^0_{1,0} (\Ombar \times \I )$,  we provide a ``non-harmonic version" of Gohberg's Lemma, and  a sufficient and necessary condition to ensure that the corresponding pseudo-differential operator is a compact operator in $L^2(\Omega)$. Also, for  pseudo-differential operators with symbols satisfying some integrability condition, one defines its associated matrix in terms of the biorthogonal system associated to $\MO$, and this matrix is used to give necessary and sufficient conditions for the $L^2(\Omega)$-boundedness, and to locate the spectrum of some operators. After that, we extend to the context of the non-harmonic analysis of boundary value problems the well known theorems about the exact domain of elliptic operators, and discuss some applications of the obtained results to evolution equations. Specifically we provide sufficient conditions to ensure the smoothness and stability of solutions to a generalised version of the heat equation. 
\end{abstract}

\maketitle
\tableofcontents
\section{{Introduction}}

Boundary value problems for pseudo-differential operators on manifolds or in domains $\Omega \subseteq \R^d$ with boundary $\partial \Omega$ have been studied in \cite{Abels, bvp2, bvp3, bvp4, bvp5, bvp6, bvp7, monvel1971, bvp8, bvp9, bvp10, bvp11, Mani1, Mani2, Mani3, Mani4} and references therein, among others. Most of the works in this topic exhibit a local approach, studying operators on the manifold through the use of charts. However, in some cases the analysis of pseudo-differential operators on a manifold can be simplified substituting the local approach by a global description \cite{DELGADONHAnalysis, Levyglobalsym, pseudosgeneratedbvp, globals1}  similar to the case of compact Lie groups \cite{clg1, clg2, ruzhansky1}.

The simplest example where a global analysis can be carried out is the $d$-dimensional torus $\Td:= \R^d /  2 \pi \Z^d$, where we have the concept of periodic pseudo-differential operators \cite{ruzhansky1, ruzhanskytor2008, Ruzhansky2007} developed with the aid of classical Fourier series techniques. The Fourier series on the unit circle $\T$, or more generally on any torus, can be viewed as an unitary transform in the Hilbert space $L^2(-\pi, \pi)$,  generated by the operator of differentiation $-i \frac{d}{dx}$ with periodic boundary conditions, because the system of exponential functions $\{e^{i x \cdot k}\}_{k \in \Z}$ is a system of its eigenfunctions. As it is exposed in \cite{N-H.AnalysisRT1, N-H.AnalysisRT2, DELGADONHAnalysis}
 this idea can be extended to a more general setting, without assuming that the problem has symmetries, using a differential operator $\MO$ of order $m$ with smooth coefficients, instead of the differential operator $-i \frac{d}{dx}$. In those papers the authors assume that $\MO$ is equipped with some boundary conditions, leading to a discrete spectrum, with its family of eigenfunctions yielding a Riesz basis in $L^2 (\Omega)$, which is a sequence $(x_n)_{n \in \N} \subseteq L^2 (\Omega)$ such that $\overline{Span\{x_n\}} = L^2 (\Omega)$ and $$ c \Big( \sum_n |a_n|^2 \Big) \leq ||\sum_n a_n x_n ||_{L^2 (\Omega)}^2 \leq C \Big( \sum_n |a_n|^2 \Big),  $$ for some constants $ 0<c \leq C < \infty$. This basis allows one to mimic the harmonic analysis constructions, and to carry out a global analysis similar to the toroidal case. The term ``non harmonic analysis" comes from the work of Paley and Wiener \cite{paley} who studied exponential systems $\{e^{2 \pi i x \cdot \lambda}\}_{\lambda \in \Lambda}$ on $L^2(0,1)$ for a discrete set $\Lambda$. Paley and Wiener use the term non-harmonic Fourier series to emphasize the distinction with the usual
(harmonic) Fourier series when $\Lambda = \Z$, and similarly in \cite{N-H.AnalysisRT1} the authors introduce the ``non-harmonic analysis of boundary value problems" as a (non-harmonic) Fourier analysis adapted to a boundary value problem.   
 
 The aim of this paper is to extend several results concerning the spectrum of pseudo-differential operators in the unit circle, to the context of the non-harmonic analysis of boundary value problems. Specifically, we will provide ``non-harmonic versions" of Gohberg's Lemma, compact operators characterisation, and spectrum localisation through Gershgorin Theory. For this, similar to \cite{N-H.AnalysisRT1, N-H.AnalysisRT2, DELGADONHAnalysis}, we have to make some assumptions about the model operator $\MO$. Throughout this paper we will be always working in the following setting:
 
 \hfill
 
\begin{asu}[\textbf{A}]
Let $\Omega$ be a smooth $d$-dimensional manifold with boundary $\partial \Omega$ such that $\Ombar$ is a compact (not necessarily smooth in the boundary) manifold. By $\MO_\Omega$ we denote a differential operator $\MO$
of order $m$ with smooth bounded coefficients in $\Omega$, equipped with some fixed linear boundary conditions. We assume that the boundary conditions called (BC) lead to a discrete spectrum, with a family of eigenfunctions yielding a Riesz basis in $L^2(\Omega)$. The discrete sets of eigenvalues and eigenfunctions will be indexed by a countable set $\I$ that, without loss of generality, will be always a subset of $\Z^l$ for some $l \in \N$ . We consider the spectrum $$Spec(\MO)=\{\lambda_\xi \in \C \esp : \esp  \xi  \in \I \},$$ of $\MO$ with
corresponding eigenfunctions in $L^2 (\Omega)$ denoted by $u_\xi$, i.e. $$\MO u_\xi = \lambda_\xi u_\xi \esp \text{in} \esp \Omega , \esp \text{for all} \esp \xi \in \I,$$ and the eigenfunctions $u_\xi$ satisfy the boundary conditions (BC). The conjugate spectral problem is $$\MO^* v_\xi = \overline{\lambda}_\xi v_\xi \esp \text{in} \esp \Omega , \esp \text{for all} \esp \xi \in \I,$$ which we equip with the conjugate boundary conditions $(BC)^*$. We assume that the functions $u_\xi$,$v_\xi$ are normalised $$\int_\Omega |u_\xi (x)|^2 dx = \int_\Omega |v_\xi (x)|^2 dx = 1 , \esp \esp \text{for all} \esp \xi \in \I, $$ and that $$ \sup_{x \in \Omega} |u_\xi (x)|\leq C_b \langle \xi \rangle^{\mu_0},  $$ for some constants $C_b >0$, $\mu_0 >0$ and every $\xi \in \I$. Here we have used the notation $$\langle \xi \rangle := (1 + |\lambda_\xi|^2)^{\frac{1}{2m}}, $$ where $m$ is the order of the differential operator $\MO$. Recall that the systems $\{u_\xi \}_{\xi \in \I}$ and $\{v_\xi \}_{\xi \in \I}$ are biorthogonal, i.e. $$( u_\xi , v_\eta )_{L^2(\Omega)} = \delta_{\xi\eta},$$ where $$( f , g )_{L^2(\Omega)} := \int_{\Omega} f(x) \overline{g(x)} dx,$$ is the usual inner product of the Hilbert space $L^2(\Omega)$ and $dx$ a measure on $\Omega$. If $\Omega$ has finite measure then we assume that the measure is normalised.  
\end{asu}

 By associating a discrete Fourier analysis to the system $\{ u_\xi \}_{\xi \in \I} $, the authors in \cite{N-H.AnalysisRT1} introduced a full symbol for a given operator acting on suitable functions over $\Omega \subset \R^d$, and this development has already been extended to smooth manifolds with boundary in \cite{DELGADONHAnalysis}. We will recall the basic elements of such symbolic analysis in Section 3.

\hfill

This paper is organised as follows:

\begin{itemize}
    \item Section 2: we give examples of operators $\MO$, and different boundary conditions yielding different types of biorthogonal systems. 
    \item Section 3: we recall the basic elements of the discrete Fourier analysis, quantisation and full symbols associated to the system of eigenfunctions of a model operator $\MO$. 
    \item Section 4: assuming Gohberg's Lemma in $L^2(\Omega)$, we provide a sufficient and necessary condition to ensure the compactness of a global pseudo-differential operator with symbol in the Hörmander class $S^0_{1,0} (\Ombar \times \I )$.
    \item Section 5: we will show that the spectrum of some pseudo-differential operators can be localised with the aid of Gershgorin Theory. Also, we will discuss an aplication of this spectrum localisation to evolution equations. 
    \item Section 6: we provide a proof of Gohberg's Lemma in $L^2(\Omega)$.
\end{itemize}

\hfill

 \section{{Examples of operators $\MO$ and boundary conditions}}
 
In this section we give several examples of the operator $\MO$ satisfying Assumption (A) and of boundary conditions $(BC)$. We want to remark that the property of having real-valued eigenfunctions will be of importance for the analysis in Section 5. For more examples see \cite{N-H.AnalysisRT1}. 

\hfill

\begin{exa}
Let $\Omega= (0,2\pi)^d$. Define $\MO_d$ in $\Omega$ as the differential operator $$\MO_d := \sum_{j=1}^d \frac{\partial^2}{\partial x_j^2},$$
together with periodic boundary conditions. This operator is self-adjoint with the domain $W_2^2 (\Omega)$ and its system of eigenfunctions is $$\{e^{i x \cdot \xi} \esp : \esp \xi \in \Z^d\},$$ which form, with a proper choise of measure, an orthonormal basis of $L^2 (\Omega)$. Eigenvalues of $\MO_d$ are $$\{ -|\xi|^2 \esp : \esp \xi \in \Z^d \}.$$
Recall that we can identify the functions in $(0,2\pi)^d$ that satisfy periodic boundary conditions with functions on the $d$-dimensional torus $\Td$. Clearly, eigenvalues and eigenfunctions of $\MO_d$ from both perspectives coincide, and also  satisfy Assumption (A). If we restrict our attention to real-valued functions, the periodic boundary value problem leads to the orthonormal basis of real-valued eigenfunctions $$\{\sqrt{2} \sin{(x \cdot \xi)} , \sqrt{2} \cos{(x \cdot \xi)}\}_{\xi \in \Z^d}.$$ 
\end{exa}

\begin{exa}
Similar to the previous example, let $\Omega= (0,2\pi)^d$ and let $h \in \R^d$ be such that $h_j >0$ for $1 \leq j \leq d$. Define $\MO_{h,d}$ in $\Omega$ as the differential operator $$\MO_{h,d} := \sum_{j=1}^d \frac{\partial^2}{\partial x_j^2},$$
together with the boundary conditions $(BC)$: \begin{align}
     f(x) |_{x_j=0} = h_jf(x)|_{x_j = 2 \pi} , \esp  \frac{\partial f}{\partial x_j} (x)|_{x_j = 0} = h_j \frac{\partial f}{\partial x_j} (x)|_{x_j = 2 \pi} ,  \esp \esp 1 \leq j \leq d, \esp
\end{align}and the domain $$Dom(\MO_{h,d}) = \{f \in L^2(\Omega) \esp : \esp \MO_{h,d} f \in L^2(\Omega) \esp \text{and} \esp f \esp \text{satisfies (1)}\},$$
Then, with $\I = \Z^d$, the system of eigenfunctions of the operator $\MO_{h,d}$ is $$\{u_\xi (x) = h^{x/2\pi} e^{i x \cdot \xi} \esp : \esp \xi \in \I\},$$ and the conjugate system is $$\{v_\xi (x) = h^{-{x/2\pi}} e^{i x \cdot \xi} \esp : \esp \xi \in \I\},$$ where $$h^{x/2\pi} := \prod_{j=1}^d h_j^{x_j/2\pi},$$ 

See \cite{N-H.AnalysisRT1} and the references therein for a detailed treatment. 
\end{exa}

\begin{exa}
The real-valued analogue of the above example is the operator $$\MO_{h,d} := \sum_{j=1}^d \frac{\partial^2}{\partial x_j^2} -  \frac{\ln{h_j}}{ \pi} \frac{\partial}{\partial x_j},$$ with the same boundary conditions as in the previous example. This operator leads to the basis of eigenfunctions $$\{ \sqrt{2} h^{x/2\pi} \cos(x\cdot \xi), \sqrt{2} h^{x/2\pi} \sin(x \cdot \xi): \esp \xi \in \Z^d\},$$ with the corresponding eigenvalues $$ - |\xi|^2 - \frac{1}{ 4 \pi^2} \sum_{j=1}^d (\ln{h_j})^2,$$ and with the corresponding biorthogonal system $$\{ \sqrt{2} h^{-x/2\pi} \cos(x\cdot \xi), \sqrt{2} h^{-x/2\pi} \sin(x \cdot \xi): \esp \xi \in \Z^d\}.$$
\end{exa}

\hfill

\begin{exa} Let $\Omega = (0,2 \pi a) \times (0,2 \pi b)$ with $a>0$ and $b>0$. Define $\MO$ as the two dimensional Laplace operator with domain $W_2^2 (\Ombar)$ and Neumann boundary conditions. As it is well known this operator is self-adjoint and its system of eigenfunctions is $$\{u_{nm}(x,y)= \sqrt{2} \cos{\frac{n x}{a}} \cdot \cos{\frac{m y}{b}} : \esp m,n \in \N\},$$ which is an orthonormal basis of $L^2 (\Omega)$. Thus $\MO$ satisfies Assumption (A).
\end{exa}

\hfill

\begin{exa}
Let $\Omega=(0,1)$. Define $\MO$ as the differential operator $$\MO:= - \frac{d^2}{dx^2},$$ on the domain  $$Dom(\MO)= \{f \in W^2_2 [0,1] : \esp f(0)=0 \esp , \frac{df}{dx}(0) = \frac{df}{dx}(1)\}.$$ This operator was studied in detail in \cite{Ionkin, Kanguzhin-Tokmagambetov}. The system of eigenfunctions of $\MO$ is $$u_0 (x) = x , \esp u_{2k-1}(x) = \sin (2\pi kx), \esp u_{2k} (x) = x \cos (2 \pi kx), \esp \esp k \in \N,$$ and the adjoint functions are $$v_0 (x) = 2 , \esp v_{2k-1}(x) =4(1-x) \sin (2\pi kx), \esp v_{2k} (x) = 4 \cos (2 \pi kx) \esp \esp k \in \N.$$ Since this system is a biorthogonal basis of $L^2 (0,1)$, the operator $\MO$ satisfies Assumption (A). 
\end{exa} 

\hfill

\begin{exa}
Let $\Omega = (0,1)$. Consider the operator $\MO = -i \frac{d}{dx}$ with the domain $$Dom(\MO)= \Big\{ f \in W^1_2 [0,1]: af(0) + bf(1) + \int_0^1 f(x) q(x) dx = 0 \Big\},$$ where $a,b \neq 0$ and $q \in C^1[0,1]$. We assume that $$a+b+ \int_0^1 q(x)dx = 1,$$ so that the inverse $\MO^{-1}$ exists and is bounded. Following \cite{Kanguzhin} we have that the system of extended eigenfunctions of $\MO$ is $$\Big\{ u_{jk} (x) = \frac{(ix)^k}{k!} e^{i \lambda_j x} : \esp 0 \leq k \leq m_j -1, \esp j \in \Z \Big\},$$ where $m_j$ denotes the multiplicity of the eigenvalue $\lambda_j= -i \ln(-a/b) + 2j\pi + \alpha_j,$ and for any $\varepsilon > 0$ we have $$\sum_{j \in \Z} |\alpha_j|^{1 + \varepsilon} < \infty. $$ Its biorthogonal system is given by $$v_{jk}(x) = \lim_{\lambda \to \lambda_j} \frac{1}{k!} \frac{d^k}{d \lambda^k} \Big( \frac{(\lambda - \lambda_j)^{m_j}}{\Delta(\lambda)} (ib e^{i \lambda(1-x)} + i \int_x^1 e^{i \lambda (t-x)}q(t) dt) \Big),$$ where $$\Delta(\lambda) = a + b e^{i \lambda} + \int_0^1 e^{i \lambda x} q(x) dx.$$ 
\end{exa}

\hfill

\begin{exa}
Let $\Omega = (-\pi,\pi) \times (0, \pi)$. Define $\MO$ as the operator $$\MO f = \frac{1}{\sin{(x_2)}}\frac{\partial}{\partial x_2}(\sin(x_2) \frac{\partial f}{\partial x_2}) + \frac{1}{\sin^2 (x_2)}\frac{\partial^2 f}{\partial x_1^2},$$ together with the boundary conditions $(BC)$:
\begin{enumerate}
    \item[(i)] $f(x_1,0) = c_1,$
    \item[(ii)] $f(x_1, \pi) = c_2,$
    \item[(iii)] $f(-\pi , x_2) = f(\pi , x_2)$ for all $x_2 \in (0,\pi)$.
\end{enumerate}
 
Similar to the periodic case, a function that satisfies $(BC)$ can be identified with a function on the sphere $\mathbb{S}^2$. Thus $\MO$ is self-adjoint in the weighted Lebesgue space$$L^2(\Omega , dx') = \{f: \Omega \to \C : \esp \int_{\Omega} |f(x_1,x_2)|^2 \sin(x_2) dx_2 dx_1 < \infty \},$$ where $dx' = \sin(x_2) dx_2 dx_1$. Its corresponding orthonormal basis of eigenfunctions is the collection of spherical harmonics $$u_{ml}(x_1 , x_2) = \sqrt{\frac{(2l+1)(l-m)!}{4 \pi (l+m )!}} P_l^m (\cos(x_2)) e^{i x_1 m}, \esp \esp l \in \N , m \in \Z,$$ with eigenvalues $l(l+1)$, where $P_l^m$ is the corresponding associated polynomial of Legendre. 
If we restrict our attention to real-valued functions, the boundary value problem leads to the orthonormal basis of real eigenfunctions \[   
u_{ml}(x_1,x_2) = 
     \begin{cases}
        (-1)^m (2 \pi) \sqrt{\frac{(2l+1)(l-|m|)!}{4 \pi (l+|m| )!}} P_l^{|m|} (\cos(x_2)) \sin{|m|x_1}, & \text{if} \esp m<0,\\ (2 \pi^2) \sqrt{\frac{(2l+1)}{4 \pi}} P_l^m (\cos{(x_2)}), &\text{if} \esp m=0, \\ (-1)^m (2 \pi) \sqrt{\frac{(2l+1)(l-m)!}{4 \pi (l+m )!}} P_l^m (\cos(x_2)) \cos{m x_1}, & \text{if} \esp m>0 .\\ 
     \end{cases}
\]

\end{exa}

\begin{exa}
Let $\Omega = (0,2 \pi) \times (0, \pi)$. Combining Examples 2.3 and 2.7 we can consider the operator $$\MO_h f := \frac{1}{\sin{(x_2)}}\frac{\partial}{\partial x_2}(\sin(x_2) \frac{\partial f}{\partial x_2}) + \frac{1}{\sin^2 (x_2)}\Big( \frac{\partial^2 f}{\partial x_1^2} - \frac{\ln h}{ \pi} \frac{\partial f}{\partial x_1} +\frac{(\ln h )^2}{4 \pi } f  \Big),$$ together with the boundary conditions $(BC)$
\begin{enumerate}
    \item[(i)] $f(x_1,0) = c_1,$
    \item[(ii)] $f(x_1, \pi) = c_2,$
    \item[(iii)] $f(0 , x_2) = h f(2 \pi , x_2)$ for all $x_2 \in (0,\pi)$.
\end{enumerate}
The operator $\MO_h$ has a discrete spectrum and its eigenvalues are $$l(l+1)$$ with corresponding eigenfunctions \[   
u_{ml}(x_1,x_2) = 
     \begin{cases}
        (-1)^m  (2 \pi) \sqrt{\frac{(2l+1)(l-|m|)!}{4 \pi (l+|m| )!}} P_l^{|m|} (\cos(x_2)) h^{\frac{x_1}{2\pi}} \sin{|m|x_1}, & \text{if} \esp m < 0, \\ (2 \pi^2) \sqrt{\frac{(2l+1)}{2}} P_l^m (\cos{(x_2)}) h^{\frac{x_1}{2 \pi}}, & \text{if} \esp m =0,\\ (-1)^m (2 \pi) \sqrt{\frac{(2l+1)(l-m)!}{4 \pi (l+m )!}} P_l^m (\cos(x_2)) h^{\frac{x_1}{2 \pi}}\cos{m x_1}, & \text{if} \esp m>0,\\ 
     \end{cases}
\] and the corresponding biorthonormal system \[   
v_{ml}(x_1,x_2) = 
     \begin{cases}
        (-1)^m  (2\pi) \sqrt{\frac{(2l+1)(l-|m|)!}{4 \pi (l+|m| )!}} P_l^{|m|} (\cos(x_2)) h^{\frac{-x_1}{2\pi}} \sin{|m|x_1}, & \text{if} \esp m < 0,\\ (2 \pi^2)\sqrt{\frac{(2l+1)}{2}} P_l^m (\cos{(x_2)}) h^{\frac{-x_1}{2 \pi}}, & \text{if} \esp m =0,  \\ (-1)^m (2 \pi)  \sqrt{\frac{(2l+1)(l-m)!}{4 \pi (l+m )!}} P_l^m (\cos(x_2)) h^{\frac{-x_1}{2 \pi}}\cos{m x_1}, & \text{if} \esp m>0.\\ 
     \end{cases}
\]
\end{exa}

\begin{exa}
Let $\Omega =(-\pi, \pi) \times (-\pi/2 , \pi/2)$. Consider the linear operator $$\MO:= \frac{\partial^2}{\partial x^2} + \frac{\partial^2}{\partial y^2},$$ together with the boundary conditions \begin{enumerate}
    \item[(i)] $f(- \pi , y ) = f(\pi , -y)$ and $\frac{\partial f}{\partial x}(- \pi , y ) = \frac{\partial f}{\partial x}( \pi , -y )=0 $ for all $y \in (-\pi , \pi)$,
    \item[(ii)] $f(x,- \pi/2) = f(x,\pi/2) = 0$ for all $x \in (- \pi , \pi).$
\end{enumerate}

Functions that satisfy the first item of the above boundary conditions can be identified with functions in the Möbius strip. The second item determines a Dirichlet boundary condition in the Möbius strip. With this boundary conditions the operator $\MO$ is self-adjoint and, using separation of variables, one can see that it has an orthonormal basis of eigenfunctions given by \[   
u_{mn}(x,y) = \frac{1}{\sqrt{2}} \sin( (\frac{2m+1}{2})x)\sin(2ny).
\]

\end{exa}

\section{{Preliminaries}}
In this section we recall the basics on the discrete Fourier analysis associated to the system of eigenfunctions of a model operator $\MO$ introduced in \cite{N-H.AnalysisRT1, N-H.AnalysisRT2, DELGADONHAnalysis}. In what follows,  $\mathcal{L} (E,F)$ will denote the collection of all continous linear operators from $E$ to $F$, the Fréchet spaces. For $E=F$  we write $\mathcal{L}(E)$ instead of $\mathcal{L} (E,E)$. 

\subsection{Test functions for $\MO$ and Schwartz kernel} In this subsection we recall some spaces of distributions generated by $\MO$ and by its adjoint $\MO^*$. We also recall the version of the Schwartz kernel theorem corresponding to the present framework. 

\begin{defi}\normalfont
\normalfont{ The space $C^\infty_\MO (\Ombar):= Dom (\MO^\infty)$ is called} the space of test functions for $\MO$.\normalfont{ Here, as in \cite{N-H.AnalysisRT1}, it is defined by $$Dom(\MO^\infty) := \bigcap_{k = 1 }^{\infty} Dom(\MO^k),$$ where $Dom(\MO^k)$ is the domain of the operator $\MO^k$, in turn defined as $$Dom(\MO^k) := \{ f \in L^2 (\Omega) : \esp \MO^j f \in Dom(\MO) ,  \esp j= 0, 1, ..., k-1\}.$$ The Fréchet topology of  $C^\infty_\mathfrak{L} (\Ombar)$ is given by the family of norms $$\norm{\varphi}_{C_\MO^k} := \max_{0 \leq j \leq k} \norm{\MO^j \varphi }_{L^2 (\Omega)}, \esp \esp k \in \N_0 , \esp \varphi \in  C^\infty_\MO (\Ombar).$$  

Analogously to the $\MO$-case, the space  $C^\infty_{\MO^*} (\Ombar)$ corresponding to the adjoint operator $\MO^*$ is defined by $$C^\infty_{\MO^*} (\Ombar) := Dom((\MO^*)^\infty) = \bigcap_{k=1}^{\infty} Dom((\MO^*)^k),$$ where $Dom((\MO^*)^k)$ is the domain of the operator $(\MO^*)^k$ $$Dom((\MO^*)^k):= \{ f \in L^2 (\Omega) : \esp (\MO^*)^j f \in Dom(\MO^*) ,  \esp j= 0, 1, ..., k-1\},$$ which satisfy the adjoint boundary conditions corresponding to the operator $\MO_\Omega^*$. The Fréchet topology of  $C^\infty_{\MO^*} (\Ombar)$ is given by the family of norms $$\norm{\psi}_{C_{\MO^*}^k} := \max_{0 \leq j \leq k} \norm{(\MO^*)^j \psi }_{L^2 (\Omega)}, \esp \esp k \in \N_0 , \esp \varphi \in  C^\infty_{\MO^*} (\Ombar).$$} 
\end{defi}

\begin{rem}

Since we have $u_\xi \in C^\infty_\MO (\Ombar)$ and $v_\xi \in C^\infty_{\MO^*}(\Ombar)$ for all $\xi \in \I$, we observe that Assumption (A) implies that the spaces $C^\infty_\MO (\Ombar)$ and $C^\infty_{\MO^*}(\Ombar)$ are dense in $L^2 (\Omega)$. 
\end{rem}

\begin{defi}\normalfont\normalfont
\normalfont The space $$\mathcal{D}_\MO' (\Omega) := \mathcal{L}(C_{\MO*}^\infty (\Ombar) , \C),$$ of linear continuous functionals on $C_{\MO*}^\infty (\Ombar)$ is called the space of $\MO$-distributions. Analogously the space $$\mathcal{D}_{\MO^*}'(\Omega) := \mathcal{L}(C_{\MO}^\infty (\Ombar), \C),$$ of linear continuous functionals on $C^\infty_\MO (\Omega)$ is called the space of $\MO^*$-distributions.
\end{defi}

\begin{rem}
For any $\psi \in C^\infty_\MO (\Ombar) $, $$C^\infty_{\MO^*} (\Ombar) \ni \varphi \mapsto \int_\Omega \psi(x) \varphi (x) dx $$ is an $\MO$-distribution, which gives an embedding $C^\infty_\MO (\Ombar) \xhookrightarrow{} \mathcal{D}_\MO' (\Omega)$.
\end{rem}
Now we recall the Schwartz kernel theorem. For this we need the following:

\begin{asu}[\textbf{B}]
Assume that the number $s_0 >0$ is such that we have $$\sum_{\xi \in \I} \langle \xi \rangle^{-s_0}< \infty.$$ 
\end{asu}

We will use the notation $$C^\infty_\MO (\Ombar \times \Ombar) := C^\infty_\MO (\Ombar) \overline{\otimes} C^\infty_\MO (\Ombar),$$ and $$C^\infty_{\MO^*} (\Ombar \times \Ombar) := C^\infty_{\MO^*} (\Ombar) \overline{\otimes} C^\infty_{\MO*} (\Ombar),$$ with the Fréchet topologies given by the family of tensor norms $$\norm{\varphi \otimes \psi}_{C^k_\MO(\Ombar \times \Ombar)} := \max_{0 \leq j + l \leq k } \norm{\MO^j \varphi}_{L^2(\Omega)} \norm{\MO^l \psi}_{L^2(\Omega)}, \esp \esp k \in \N_0, $$ and  $$\norm{\varphi \otimes \psi}_{C^k_{\MO^*}(\Ombar \times \Ombar)} := \max_{0 \leq j + l \leq k } \norm{(\MO^*)^j \varphi}_{L^2(\Omega)} \norm{(\MO^*)^l \psi}_{L^2(\Omega)}, \esp \esp k \in \N_0.$$ For the corresponding dual spaces we write \begin{align*}
    \mathcal{D}_\MO' (\Omega \times \Omega) := (C^\infty_{\MO^*} (\Ombar \times \Ombar))', \\ \mathcal{D}_{\MO^*}' (\Omega \times \Omega) := (C^\infty_{\MO} (\Ombar \times \Ombar))'.  
\end{align*} 

\begin{teo}[Schwartz kernel]
For any linear continuous operator $$A: C^\infty_\MO (\Ombar) \to \mathcal{D}_\MO' (\Omega ),$$ there exists a kernel $K_A \in \mathcal{D}_\MO' (\Omega \times \Omega)$ such that for all $f \in C^\infty_\MO (\Ombar)$, we can write, in the distribution sense $$A f (x) = \int_\Omega K_A(x,y) f(y) dy.$$ 
Also, for any linear continuous operator $$A: C^\infty_{\MO^*} (\Ombar) \to \mathcal{D}_{\MO^*}' (\Omega)$$ there exists a kernel $\Tilde{K}_A \in \mathcal{D}_{L^*} '(\Omega \times \Omega)$ such that for all $f \in C^\infty_{\MO^*} (\Ombar)$ we can write, in the distribution sense $$A f (x) = \int_\Omega \Tilde{K}_A(x,y) f(y) dy.$$ 
\end{teo}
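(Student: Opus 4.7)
My plan is to adapt the classical Schwartz kernel theorem to the present non-harmonic setting by constructing the kernel explicitly through the biorthogonal expansion associated with $\{u_\xi\}_{\xi\in\I}$ and $\{v_\xi\}_{\xi\in\I}$. The overall strategy has three stages: identify the test function spaces with sequence spaces, write $A$ as a Fourier multiplier against the basis, and read off the kernel as a formal series whose convergence is controlled by Assumption~(B).

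First I would verify that the $\MO$-Fourier transform $f\mapsto \widehat f$ with $\widehat f(\xi):=(f,v_\xi)_{L^2(\Omega)}$ is a topological isomorphism between $C^\infty_\MO(\Ombar)$ and the rapidly decreasing sequence space $\mathcal{S}(\I):=\{a:\I\to\C:\sup_\xi\langle\xi\rangle^{N}|a(\xi)|<\infty\ \forall N\}$. This is a direct consequence of the Riesz basis property together with the relations $\MO^j u_\xi=\lambda_\xi^j u_\xi$: the seminorms $\|\varphi\|_{C^k_\MO}=\max_{j\le k}\|\MO^j\varphi\|_{L^2(\Omega)}$ translate, under the Riesz equivalence, into the seminorms $\sup_\xi\langle\xi\rangle^{km}|\widehat\varphi(\xi)|$ (up to constants). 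The same argument identifies $C^\infty_{\MO^*}(\Ombar)$ with $\mathcal{S}(\I)$ via the conjugate transform $\psi\mapsto(\psi,u_\xi)_{L^2(\Omega)}$. Nuclearity of $\mathcal{S}(\I)$ then identifies the tensor products $C^\infty_\MO(\Ombar)\,\overline\otimes\, C^\infty_\MO(\Ombar)$ and $C^\infty_{\MO^*}(\Ombar)\,\overline\otimes\, C^\infty_{\MO^*}(\Ombar)$ with the analogous smooth spaces on $\Omega\times\Omega$, and passes naturally to the duals.

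Next, for any $f\in C^\infty_\MO(\Ombar)$ I would expand $f=\sum_{\xi\in\I}\widehat f(\xi) u_\xi$ with convergence in $C^\infty_\MO(\Ombar)$; continuity of $A:C^\infty_\MO(\Ombar)\to\mathcal{D}'_\MO(\Omega)$ then yields $Af=\sum_\xi \widehat f(\xi)\,Au_\xi$ in $\mathcal{D}'_\MO(\Omega)$. Substituting the integral definition of $\widehat f(\xi)$ suggests taking
$$K_A(x,y):=\sum_{\xi\in\I}(Au_\xi)(x)\,\overline{v_\xi(y)},$$
interpreted as an $\MO$-distribution on $\Omega\times\Omega$. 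To justify this, I would pair with an arbitrary test $\varphi\otimes\psi\in C^\infty_{\MO^*}(\Ombar\times\Ombar)$, obtaining $\sum_\xi \langle Au_\xi,\varphi\rangle\,\overline{(\psi,u_\xi)_{L^2}}$. The factor $\overline{(\psi,u_\xi)}$ decays faster than any polynomial in $\langle\xi\rangle$ by the first paragraph, while continuity of $A$ gives a bound $|\langle Au_\xi,\varphi\rangle|\lesssim \|u_\xi\|_{C^k_\MO}\lesssim \langle\xi\rangle^{km}$ for some finite $k$ depending on $\varphi$. Combined with Assumption~(B), this delivers absolute convergence and the required continuity in $\varphi\otimes\psi$.

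Finally I would verify the representation $Af(x)=\int_\Omega K_A(x,y)f(y)\,dy$ in the distribution sense by pairing both sides with $\varphi\in C^\infty_{\MO^*}(\Ombar)$: Fubini, applied to the absolutely convergent series in the previous paragraph, reduces the identity to the biorthogonal expansion of $Af$. The second half of the theorem, concerning $A:C^\infty_{\MO^*}(\Ombar)\to\mathcal{D}'_{\MO^*}(\Omega)$, follows by the same argument after swapping the roles of $u_\xi$ and $v_\xi$ throughout. The main technical obstacle will be the convergence analysis in the second paragraph, namely balancing the polynomial growth of $\|Au_\xi\|$ on $\xi$ against the rapid decay of the Fourier coefficients of test functions; the summability Assumption~(B) together with the Fourier characterisation of smoothness are precisely the tools that close this gap.
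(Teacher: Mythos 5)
The paper does not actually prove this theorem: after its statement it simply refers the reader to \cite{N-H.AnalysisRT1, N-H.AnalysisRT2} for further discussion. So there is no in-paper proof to compare your argument against; I can only assess your proposal on its own terms.

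Your overall architecture (identify $C^\infty_\MO(\Ombar)$ with $\mathcal{S}(\I)$ via the $\MO$-Fourier transform, exploit nuclearity, write the kernel as the biorthogonal series $K_A(x,y)=\sum_{\xi}(Au_\xi)(x)\,\overline{v_\xi(y)}$) is the natural one, and the choice of kernel is forced by the requirement $\int\overline{v_\xi(y)}f(y)\,dy=\widehat f(\xi)$. However, there is a genuine gap in the convergence step. When you pair $K_A$ with $\varphi\otimes\psi\in C^\infty_{\MO^*}(\Ombar\times\Ombar)$ using the \emph{bilinear} pairing of Remark 3.4, the factor contributed by the second variable is
\[
\int_\Omega \overline{v_\xi(y)}\,\psi(y)\,dy \;=\; (\psi, v_\xi)_{L^2(\Omega)},
\]
\emph{not} $\overline{(\psi,u_\xi)_{L^2}}=\overline{\widehat{\psi}_*(\xi)}$ as you wrote. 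This distinction matters: for $\psi\in C^\infty_{\MO^*}(\Ombar)$ the rapid decay you need is known only for $\widehat\psi_*(\xi)=(\psi,u_\xi)_{L^2}$, which follows by writing $(\psi,u_\xi)=\lambda_\xi^{-k}((\MO^*)^k\psi,u_\xi)$ and using that $\psi\in Dom((\MO^*)^k)$. The quantity $(\psi,v_\xi)_{L^2}$ admits no such integration by parts, since $v_\xi$ is an eigenfunction of $\MO^*$ (not of $\MO$) and $\psi$ is not assumed to lie in $Dom(\MO^k)$; expanding $\psi=\sum_\eta\widehat\psi_*(\eta)v_\eta$ leaves you with the Gram matrix $(v_\eta,v_\xi)$, which for a Riesz basis is merely bounded, not rapidly decaying off the diagonal. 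So the absolute convergence of your series, on which the whole argument rests, is not actually established. Note also that the same mismatch appears in the $x$-variable: each term $\overline{v_\xi(\cdot)}$ is a test function for $\MO^*$, not for $\MO$, so it is not obvious that the series defines an element of $\mathcal{D}'_\MO(\Omega\times\Omega)=(C^\infty_{\MO^*}(\Ombar\times\Ombar))'$ at all. To close the gap you would need either an additional structural assumption relating $\MO$ and $\MO^*$ (e.g.\ self-adjointness, or $\overline{v_\xi}\in C^\infty_\MO(\Ombar)$) or a different kernel construction that keeps the $y$-variable paired against $C^\infty_\MO(\Ombar)$ rather than $C^\infty_{\MO^*}(\Ombar)$, consistent with the representation formula $Af(x)=\int K_A(x,y)f(y)\,dy$ with $f\in C^\infty_\MO(\Ombar)$.
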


For further discussion see \cite{N-H.AnalysisRT1, N-H.AnalysisRT2}.

\subsection{$\MO$-Fourier transform} In this subsection we recall the definition of the $\MO$-Fourier transform. 

Let $\mathcal{ S}(\I)$ denote the space of rapidly decaying functions $\varphi : \I \to \C$. That is, $\varphi \in \mathcal{S} (\I)$ if for any $M < \infty$ there exists a constant $C_{\varphi , M}$ such that $$|\varphi (\xi)| \leq C_{\varphi , M} \langle \xi \rangle^{-M},$$ holds for all $\xi \in \I$. The topology in $\mathcal{S}(\I)$ is given by the seminorms $p_k$ where $k \in \N_0$ and $$p_k (\varphi) := \sup_{\xi \in \I} \langle  \xi \rangle^k |\varphi(\xi)|.$$ Continuous linear functionals on $\mathcal{S}(\I)$ are of the form $$\varphi \mapsto \langle u , \varphi \rangle := \sum_{\xi \in \I} u(\xi) \varphi (\xi),$$ where functions $u : \I \to \C$ grow at most polynomially at infinity i.e. there exist constants $M < \infty$ and $C_{u,M}$ such that $$|u(\xi)| \leq C_{u,M} \langle \xi \rangle^{M},$$ holds for all $\xi \in \I$. Such distributions $u:\I \to \C$ form the space of distributions which we denote by $\mathcal{S}'(\I)$.

\begin{defi}\normalfont\normalfont
The $\MO$-Fourier transform $$(\mathcal{F}_\MO  f)(\xi) = (f \mapsto \widehat{f}): C^\infty_\MO (\Ombar) \to \mathcal{S}(\I),$$ is defined by $$\widehat{f}(\xi):= (\mathcal{F}_\MO f)(\xi) = \int_\Omega f(x) \overline{v_\xi (x)} dx.$$ Analogously, one defines the $\MO^*$-Fourier transform $$(\mathcal{F}_{\MO^*}  f)(\xi) = (f \mapsto \widehat{f}_*): C^\infty_{\MO^*} (\Ombar) \to \mathcal{S}(\I),$$ by $$\widehat{f}_* (\xi):= (\mathcal{F}_{\MO^*} f)(\xi) = \int_\Omega f(x) \overline{u_\xi (x)} dx.$$
\end{defi}

The next proposition can be found in \cite[Proposition 2.7]{N-H.AnalysisRT1}.

\begin{pro}
The $\MO$-Fourier transform $\mathcal{F}_\MO$ is a bijective homeomorphism from $C^\infty_\MO (\Ombar)$ to $ \mathcal{S}(\I)$. Its inverse $$\mathcal{F}^{-1}_\MO : \mathcal{S}(\I)  \to C^\infty_\MO (\Ombar),$$ is given by $$(\mathcal{F}_\MO^{-1} h )(x) = \sum_{\xi \in \I} h (\xi) u_\xi (x), \esp \esp h \in \mathcal{S}(\I),$$ so that the Fourier inversion formula becomes $$f(x) = \sum_{\xi \in \I} \widehat{f}(\xi) u_\xi (x) \esp \esp \text{for all} \esp f \in C^\infty_\MO (\Ombar).$$ Similarly, $\mathcal{F}_{\MO^*} : C^\infty_{\MO^*} (\Ombar) \to \mathcal{S}(\I)$ is a bijective homeomorphism and its inverse $$\mathcal{F}_{\MO^*}^{-1} : \mathcal{S}(\I) \to C^\infty_{\MO^*} (\Ombar),$$ is given by $$(\mathcal{F}_{\MO^*}^{-1} h)(x):= \sum_{\xi \in \I} h(\xi) v_\xi (x), $$ so that the conjugate Fourier inversion formula becomes $$f(x) = \sum_{\xi \in \I} \widehat{f}_* (\xi) v_\xi (x) \esp \esp \text{for all } \esp f \in C^\infty_{\MO^*} (\Ombar).$$
\end{pro}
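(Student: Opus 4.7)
The plan is to establish the four claims separately — that $\mathcal{F}_\MO$ maps $C^\infty_\MO (\Ombar)$ into $\mathcal{S}(\I)$, that it is continuous, that the stated inverse formula really produces a two-sided inverse, and that this inverse is continuous — and then note that the statement for $\MO^*$ follows by the identical argument with the roles of $u_\xi$ and $v_\xi$ swapped. The engine throughout will be the symbolic identity produced by the adjoint relation $\MO^* v_\xi = \overline{\lambda}_\xi v_\xi$.

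First I would establish, for every $f \in C^\infty_\MO(\Ombar)$ and $k \in \N_0$, the identity
\[
\widehat{\MO^k f}(\xi) = \int_\Omega \MO^k f(x) \overline{v_\xi(x)} dx = \int_\Omega f(x) \overline{(\MO^*)^k v_\xi(x)} dx = \lambda_\xi^k \widehat{f}(\xi),
\]
which uses only the fact that $v_\xi$ lies in $C^\infty_{\MO^*}(\Ombar)$ (so the adjoint relation can be iterated without boundary contributions) and that $v_\xi$ is an eigenvector. Combining this with Cauchy-Schwarz and the normalisation $\norm{v_\xi}_{L^2(\Omega)}=1$ gives $|\lambda_\xi|^k |\widehat{f}(\xi)| \leq \norm{\MO^k f}_{L^2(\Omega)} \leq \norm{f}_{C_\MO^k}$. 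Since $\langle \xi\rangle^{2mk} = (1+|\lambda_\xi|^2)^k$, expanding the binomial and taking suprema produces a chain of estimates of the form $\langle \xi\rangle^{mk} |\widehat{f}(\xi)| \leq C_k \norm{f}_{C_\MO^k}$, which simultaneously places $\widehat{f}$ in $\mathcal{S}(\I)$ and yields continuity of $\mathcal{F}_\MO$.

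For the inverse, I would take $h \in \mathcal{S}(\I)$ and set $\varphi := \sum_\xi h(\xi) u_\xi$, then show the series converges in every $\norm{\cdot}_{C_\MO^k}$. Applying $\MO^k$ formally gives $\sum_\xi \lambda_\xi^k h(\xi) u_\xi$, and the upper Riesz-basis inequality $\norm{\sum_\xi a_\xi u_\xi}_{L^2(\Omega)}^2 \leq C \sum_\xi |a_\xi|^2$ reduces $L^2$-convergence to showing that $\sum_\xi |\lambda_\xi|^{2k} |h(\xi)|^2$ is finite; this follows from the rapid decay of $h$ together with Assumption (B), which furnishes a rate $s_0$ with $\sum_\xi \langle\xi\rangle^{-s_0}<\infty$ to absorb any polynomial weight. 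Because $\MO$ is closed on its domain and both the partial sums and their images under $\MO^k$ converge in $L^2(\Omega)$, the limit lies in $C_\MO^\infty(\Ombar)$ and one obtains a quantitative bound $\norm{\varphi}_{C_\MO^k} \leq C_k p_{M_k}(h)$, proving continuity of the proposed inverse map.

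It remains to verify that the two compositions are the identity. Applying $\mathcal{F}_\MO$ to $\mathcal{F}_\MO^{-1} h$ and interchanging sum and integral (justified by the norm convergence established in the previous step) gives
\[
(\mathcal{F}_\MO \mathcal{F}_\MO^{-1} h)(\eta) = \sum_{\xi \in \I} h(\xi) (u_\xi , v_\eta)_{L^2(\Omega)} = \sum_{\xi\in\I} h(\xi) \delta_{\xi\eta} = h(\eta)
\]
by biorthogonality. Conversely, for $f \in C_\MO^\infty(\Ombar)$ we must show $f = \sum_\xi \widehat{f}(\xi) u_\xi$; since $\{u_\xi\}$ is a Riesz basis in $L^2(\Omega)$ with unique biorthogonal dual $\{v_\xi\}$, the coefficients in the basis expansion of $f$ are forced to be $(f,v_\xi)_{L^2(\Omega)} = \widehat{f}(\xi)$. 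The step I expect to be most delicate is the justification that the termwise action of $\MO^k$ on the inversion series coincides with the action on the $L^2$-limit; once this is handled via the closedness of $\MO^k$ and the simultaneous $L^2$-convergence of the shifted series $\sum \lambda_\xi^k h(\xi) u_\xi$, everything else is a bookkeeping exercise in combining the Riesz basis inequalities with Assumption (B).
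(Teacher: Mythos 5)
The paper cites this proposition from \cite[Proposition~2.7]{N-H.AnalysisRT1} without reproducing a proof, so there is no in-text argument to compare against, but your argument is correct and follows the standard route the cited reference uses: the transfer identity $\widehat{\MO^k f}(\xi)=\lambda_\xi^k\widehat{f}(\xi)$ with Cauchy--Schwarz for forward continuity, the upper Riesz-basis inequality together with the rapid decay of $h$ and Assumption~(B) for convergence of the inversion series, and biorthogonality plus uniqueness of Riesz-basis expansions for the composition identities. The one point you are right to flag as delicate — that the $L^2$-limit of the inversion series, together with the $L^2$-convergence of $\MO^j$ applied to its partial sums, forces the limit into $C^\infty_\MO(\Ombar)$ — requires closedness of $\MO^k$, which should be stated as implicit in Assumption~(A): a differential operator with boundary conditions producing a discrete spectrum with a Riesz basis of eigenfunctions is necessarily closed (otherwise its spectrum is all of $\C$), and then $\MO^k$ on the nested domains of Definition~3.1 inherits closedness.
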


We note that since the systems of $u_\xi$ and of $v_\xi$ are Riesz bases, we can also compare the $L^2$-norms of functions with sums of squares of Fourier coefficients. The following statement follows from the work of Bari \cite{Bar51}.

\begin{lema}
There exist constants $k_1,K_1,k_2,K_2 >0$ such that for every $f \in L^2 (\Omega)$ we have $$k_1^2 \norm{f}_{L^2 (\Omega)}^2 \leq \sum_{\xi \in \I} |\widehat{f} (\xi)|^2 \leq K_1^2 \norm{f}_{L^2(\Omega)}^2,$$ and $$k_2^2 \norm{f}_{L^2(\Omega)}^2 \leq \sum_{\xi \in \I} |\widehat{f}_* (\xi)|^2 \leq K_2^2 \norm{f}_{L^2(\Omega)}^2.$$
\end{lema}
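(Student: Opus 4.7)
The plan is to reduce both inequalities to the defining Riesz basis inequality applied to $\{u_\xi\}$ and to $\{v_\xi\}$, using biorthogonality to identify the Riesz basis expansion coefficients of $f$ with the Fourier coefficients $\widehat{f}(\xi)$ and $\widehat{f}_*(\xi)$ respectively.

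First I would observe that, by definition, the $\MO$-Fourier coefficient is an $L^2$ inner product:
$$\widehat{f}(\xi) = \int_\Omega f(x) \overline{v_\xi(x)}\, dx = (f, v_\xi)_{L^2(\Omega)},$$
and similarly $\widehat{f}_*(\xi) = (f, u_\xi)_{L^2(\Omega)}$. Next, since $\{u_\xi\}_{\xi \in \I}$ is a Riesz basis of $L^2(\Omega)$ by Assumption (A), every $f \in L^2(\Omega)$ has a unique expansion $f = \sum_{\xi \in \I} a_\xi u_\xi$ with coefficients satisfying the two-sided inequality
$$c \sum_{\xi \in \I} |a_\xi|^2 \leq \|f\|_{L^2(\Omega)}^2 \leq C \sum_{\xi \in \I} |a_\xi|^2.$$
Applying $(\cdot, v_\eta)_{L^2(\Omega)}$ to the expansion and using the biorthogonality relation $(u_\xi, v_\eta)_{L^2(\Omega)} = \delta_{\xi \eta}$ from Assumption (A) gives $a_\eta = (f, v_\eta)_{L^2(\Omega)} = \widehat{f}(\eta)$. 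Substituting, this produces the first inequality with the explicit constants $k_1^2 = 1/C$ and $K_1^2 = 1/c$.

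For the second pair of inequalities I would invoke the classical result of Bari (cited in the paper) that the biorthogonal sequence to a Riesz basis is itself a Riesz basis of $L^2(\Omega)$. Hence there exist $c', C' > 0$ such that every $f \in L^2(\Omega)$ has an expansion $f = \sum_\xi b_\xi v_\xi$ with $c' \sum |b_\xi|^2 \leq \|f\|_{L^2(\Omega)}^2 \leq C' \sum |b_\xi|^2$. Pairing against $u_\eta$ and using biorthogonality once more gives $b_\eta = (f, u_\eta)_{L^2(\Omega)} = \widehat{f}_*(\eta)$, which yields the second inequality with $k_2^2 = 1/C'$ and $K_2^2 = 1/c'$.

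No step here should present a real obstacle: the mild point is that one must know the biorthogonal system of a Riesz basis is again a Riesz basis, which is exactly what the paper attributes to Bari's work, and the rest is a one-line identification of coefficients via biorthogonality. I would not carry out routine calculations beyond what is shown above, since the two Riesz inequalities and the biorthogonality relations directly give the four constants.
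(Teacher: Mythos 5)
Your proof is correct and matches the spirit of the paper's treatment: the paper itself gives no argument, merely citing Bari, and your proof supplies precisely the standard derivation (identify the Riesz-basis expansion coefficients with $\widehat{f}(\xi)$, respectively $\widehat{f}_*(\xi)$, via biorthogonality, then invert the two-sided Riesz inequality, invoking Bari's theorem that the biorthogonal system of a Riesz basis is again a Riesz basis for the second pair). The only unstated but routine point is that pairing the $L^2$-convergent expansion $f=\sum_\xi a_\xi u_\xi$ against $v_\eta$ term-by-term is legitimate by continuity of the inner product, which is immediate.
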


However we note that the Plancherel identity can be also achieved in suitably defined $\ell^2$-spaces of Fourier coefficients, see Proposition 3.10.

\subsection{Plancherel formula and Sobolev spaces} In this subsection we recall the Plancherel identity obtained by defining suitable sequence spaces $\ell^2 (\MO)$ and $\ell^2(\MO^*)$ adapted to the present framework. Also, we recall the definition of Sobolev spaces associated to the model operator $\MO$. 

\begin{defi}\normalfont
We will denote by $\ell^2 (\MO)$ the linear space of complex valued functions $a$ on $\I$ such that $\mathcal{F}_\MO^{-1} a \in L^2 (\Omega)$, i.e. if there exists $f \in L^2 (\Omega)$ such that $\mathcal{F}_\MO f = a$. Then the space of sequences $\ell^2 (\MO)$ is a Hilbert space with the inner product $$( a , b )_{\ell^2 (\MO)} := \sum_{\xi \in \I} a(\xi) \overline{(\mathcal{F}_{\MO^*} \circ \mathcal{F}_\MO^{-1} b) (\xi)},$$ for arbitrary $a,b \in \ell^2(\MO).$  Analogously, the Hilbert space $\ell^2 (\MO^*)$ is the space of functions $a$ on $\I$ such that $\mathcal{F}_{\MO^*}^{-1} a \in L^2 (\Omega)$, with the inner product $$( a , b )_{\ell^2 (\MO^*)} := \sum_{\xi \in \I} a(\xi) \overline{(\mathcal{F}_{\MO} \circ \mathcal{F}_{\MO^*}^{-1} b) (\xi)}.$$ Also, we recall the definition of the $\ell^p$-spaces (see \cite[Definition 7.1]{N-H.AnalysisRT1}) associated with the model operator $\MO$ defined by $$\ell^p(\MO) := \{a:\I \to \C: \esp \sum_{\xi \in \I} |a(\xi)|^p ||u_\xi||_{L^\infty (\Omega)}^{2-p} < \infty \},$$ $$\ell^p(\MO^*) := \{a:\I \to \C: \esp \sum_{\xi \in \I} |a(\xi)|^p ||v_\xi||_{L^\infty (\Omega)}^{2-p} < \infty \},$$ for $1\leq p \leq 2$, and $$\ell^p(\MO) := \{a:\I \to \C: \esp \sum_{\xi \in \I} |a(\xi)|^p ||v_\xi||_{L^\infty (\Omega)}^{2-p} < \infty \},$$ $$\ell^p(\MO^*) := \{a:\I \to \C: \esp \sum_{\xi \in \I} |a(\xi)|^p ||u_\xi||_{L^\infty (\Omega)}^{2-p} < \infty \},$$ for $2\leq p <\infty $. Also, we recall the definition of the usual $\ell^p$-spaces $$\ell^p(\I) := \{a:\I \to \C: \esp \sum_{\xi \in \I} |a(\xi)|^p < \infty \},$$ for $1\leq p < \infty$.
\end{defi}

The reason for the definition in the above form becomes clear in view of the following Plancherel identity. See \cite[Proposition 6.1]{N-H.AnalysisRT1}.

\begin{pro}[Plancherel's identity] If $f,g \in L^2 (\Omega)$ then $\widehat{f}, \widehat{g} \in \ell^2 (\MO)$, $\widehat{f}_*, \widehat{g}_* \in \ell^2 (\MO^*)$ and the inner products take the form $$( \widehat{f} , \widehat{g} )_{\ell^2 (\MO)} = \sum_{\xi \in \I} \widehat{f} (\xi) \overline{\widehat{g}_* (\xi)},$$ and $$( \widehat{f}_* , \widehat{g}_* )_{\ell^2 (\MO^*)} = \sum_{\xi \in \I} \widehat{f}_* (\xi) \overline{\widehat{g} (\xi)}.$$ In particular we have $$\overline{( \widehat{f} , \widehat{g} )}_{\ell^2 (\MO)} = ( \widehat{g}_* , \widehat{f}_* )_{\ell^2 (\MO^*)}.$$ The Parseval identity takes the form$$( f,g )_{L^2(\Omega)} = ( \widehat{f} , \widehat{g} )_{\ell^2 (\MO)} = \sum_{\xi \in \I} \widehat{f} (\xi) \overline{\widehat{g} (\xi)}.$$ Furthermore, for any $f \in L^2 (\Omega)$, we have $\widehat{f} \in \ell^2 (\MO)$, $\widehat{f}_* \in \ell^2 (\MO^*)$, and $$\norm{f}_{L^2(\Omega)} = ||\widehat{f}||_{\ell^2 (\MO)} = ||\widehat{f}_*||_{\ell^2 (\MO^*)}.$$
\end{pro}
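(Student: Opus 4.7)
The plan is to derive everything from the Fourier inversion formulas of Proposition 3.8 together with the bounds of Lemma 3.9 (Bari). First I would work under the additional assumption that $f,g\in C^\infty_\MO(\Ombar)$, where convergence is automatic because $\widehat{f},\widehat{g}_*\in\mathcal{S}(\I)$. Using the Fourier inversion $g=\sum_\eta \widehat{g}_*(\eta)\, v_\eta$ (valid in $L^2(\Omega)$ since $\{v_\xi\}$ is a Riesz basis and $\widehat{g}_*$ is rapidly decaying), together with the biorthogonality $(u_\xi,v_\eta)_{L^2(\Omega)}=\delta_{\xi\eta}$, I would compute
\begin{equation*}
(f,g)_{L^2(\Omega)}=\Bigl(\sum_{\xi}\widehat{f}(\xi)u_\xi,\,\sum_{\eta}\widehat{g}_*(\eta)v_\eta\Bigr)_{L^2(\Omega)}
=\sum_{\xi\in\I}\widehat{f}(\xi)\,\overline{\widehat{g}_*(\xi)},
\end{equation*}
and, in a symmetric way, $(f,g)_{L^2(\Omega)}=\sum_\xi \overline{\widehat{g}(\xi)}\,\widehat{f}_*(\xi)$ by swapping the roles of the two bases.

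Next I would translate these identities into inner products on the sequence spaces. By the very definition of $(\cdot,\cdot)_{\ell^2(\MO)}$ and the fact that $\mathcal{F}_\MO^{-1}\widehat{g}=g$ (Proposition 3.8), we have
\begin{equation*}
(\widehat{f},\widehat{g})_{\ell^2(\MO)}=\sum_{\xi\in\I}\widehat{f}(\xi)\,\overline{(\mathcal{F}_{\MO^*}\circ\mathcal{F}_\MO^{-1}\widehat{g})(\xi)}
=\sum_{\xi\in\I}\widehat{f}(\xi)\,\overline{\widehat{g}_*(\xi)},
\end{equation*}
which combined with the previous display yields $(f,g)_{L^2(\Omega)}=(\widehat{f},\widehat{g})_{\ell^2(\MO)}$. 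The analogous computation in $\ell^2(\MO^*)$ gives the second displayed formula, and taking complex conjugates provides the symmetry relation $\overline{(\widehat{f},\widehat{g})}_{\ell^2(\MO)}=(\widehat{g}_*,\widehat{f}_*)_{\ell^2(\MO^*)}$.

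To pass from $C^\infty_\MO(\Ombar)$ to $L^2(\Omega)$, I would invoke Lemma 3.9: it guarantees $\widehat{f},\widehat{g}\in\ell^2(\I)$ (with $\ell^2(\I)$-norms controlled by the $L^2$-norms), so by Cauchy--Schwarz the right-hand side $\sum_\xi \widehat{f}(\xi)\overline{\widehat{g}_*(\xi)}$ converges absolutely, and the identities extend by continuity from the dense subspace $C^\infty_\MO(\Ombar)\subseteq L^2(\Omega)$ (Remark 3.2). Moreover, since $\{u_\xi\}$ is a Riesz basis and the unique coefficients of $f$ in that basis are $\widehat{f}(\xi)$ (by biorthogonality with $\{v_\xi\}$), we have $f=\mathcal{F}_\MO^{-1}\widehat{f}\in L^2(\Omega)$, so $\widehat{f}\in\ell^2(\MO)$; the same argument for the $v_\xi$-basis shows $\widehat{f}_*\in\ell^2(\MO^*)$. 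The Parseval identity and the norm equalities follow by specialising $g=f$ in the sesquilinear formulas.

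The main obstacle I foresee is not the formal manipulation but the justification of the interchange of summation and inner product in $L^2(\Omega)$ when only the Riesz (not orthonormal) basis property is available: Bari's bounds in Lemma 3.9 are essential here, as they replace the usual Bessel inequality and legitimise both the absolute convergence of the scalar series and the continuity argument used for the density step.
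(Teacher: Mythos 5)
The paper itself does not prove this proposition; it cites it from \cite[Proposition~6.1]{N-H.AnalysisRT1}, so there is no internal proof to compare against. Your argument is the standard one and is essentially correct: expand $f$ in the $u_\xi$-basis, expand $g$ in the $v_\eta$-basis, use biorthogonality and the continuity of the $L^2$ inner product, then identify $(\widehat f,\widehat g)_{\ell^2(\MO)}$ with $\sum_\xi \widehat f(\xi)\overline{\widehat g_*(\xi)}$ directly from Definition~3.9.

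Two small remarks. First, the parenthetical justification ``$\widehat g_*$ is rapidly decaying'' for $g\in C^\infty_\MO(\Ombar)$ is not warranted: rapid decay of $\widehat g_*$ would require $g\in C^\infty_{\MO^*}(\Ombar)$, not $C^\infty_\MO(\Ombar)$, since $\lambda_\xi^k\widehat g_*(\xi)=\int_\Omega g\,\overline{\MO^k u_\xi}\,dx$ only yields decay after moving $(\MO^*)^k$ onto $g$. Fortunately this claim is not needed: the expansion $g=\sum_\eta\widehat g_*(\eta)v_\eta$ already converges in $L^2(\Omega)$ for \emph{every} $g\in L^2(\Omega)$, because $\{v_\eta\}$ is a Riesz basis with biorthogonal system $\{u_\eta\}$. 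In fact, for the same reason, the restriction to $C^\infty_\MO(\Ombar)$ and the subsequent density step can be dropped entirely; Lemma~3.8 and Cauchy--Schwarz already legitimise the interchange of summation and inner product directly for arbitrary $f,g\in L^2(\Omega)$. Second, note that your computation (correctly) gives $(f,g)_{L^2(\Omega)}=\sum_\xi\widehat f(\xi)\overline{\widehat g_*(\xi)}$, whereas the final displayed Parseval formula in the statement reads $\sum_\xi\widehat f(\xi)\overline{\widehat g(\xi)}$; the latter can only agree with $(\widehat f,\widehat g)_{\ell^2(\MO)}$ when $\widehat g_*=\widehat g$ (e.g.\ $\MO$ self-adjoint), so this appears to be a typographical slip in the statement itself. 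Your proof yields the correct version, but the discrepancy is worth flagging rather than passing over silently.
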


As a consequence of the properties of the $\MO$-Fourier transform collected so far, the definition of Sobolev space correspondent to the present setting naturally arises \cite{N-H.AnalysisRT1}.

\begin{defi}\normalfont[Sobolev spaces $\mathcal{H}^s_\MO (\Omega)$]
For $f \in \mathcal{D}_\MO' (\Omega) \cap \mathcal{D}_{\MO^*}' (\Omega)$ and $s \in \R$, we say that $f \in \mathcal{H}^s_\MO (\Omega)$ if and only if $\langle \xi \rangle^s \widehat{f} (\xi) \in \ell^2 (\MO)$. We define the norm on $\mathcal{H}^s_\MO (\Omega)$ by $$||f||_{\mathcal{H}^s_\MO (\Omega)} := \Big( \sum_{\xi \in \I} \langle \xi \rangle^{2s} \widehat{f} (\xi) \overline{\widehat{f}_* (\xi)} \Big)^{1/2}.$$ The Sobolev space $\mathcal{H}^s_\MO (\Omega)$ is then the space of $\MO$-distributions $f$ for which we have $||f||_{\mathcal{H}^s_\MO} < \infty$. Similarly, we can define the space $\mathcal{H}^s_{\MO^*} (\Omega)$ by the condition  $$||f||_{\mathcal{H}^s_{\MO^*} (\Omega)} := \Big( \sum_{\xi \in \I} \langle \xi \rangle^{2s} \widehat{f}_* (\xi) \overline{\widehat{f} (\xi)} \Big)^{1/2}< \infty.$$ We note that $\mathcal{H}_{\MO}^s= \mathcal{H}_{\MO^*}^s.$
\end{defi}
\subsection{$\MO$-admissible operators and $\MO$-quantisation}

In this subsection we describe the $\MO$-quantisation of the $\MO$–admissible operators induced
by the operator $\MO$.

\begin{defi}\normalfont
We say that the linear continuous operator $$A: C^\infty_\MO (\Ombar) \to \mathcal{D}_\MO'(\Omega),$$ 
belongs to the class of $\MO$–admissible operators if $$\sum_{\eta \in \I} u_\eta^{-1} (x) u_\eta (z) \int_{\Omega} K_A (x,y) u_\eta (y) dy ,$$ is in $\mathcal{D}_\MO'(\Omega \times \Omega)$. For example, this is the case when the functions $u_\xi$ do not have any zeros in $\Omega$.
\end{defi}

\begin{rem}
Note that the expression
$$u_\eta^{-1} (x) \int_\Omega K_A (x,y) u_\eta (y) dy,$$
exists for any operator $A$ from the class of $\MO$–admissible operators. Moreover, it is in $\mathcal{D}_\MO'(\Omega) \otimes \mathcal{S}'(\I)$.
\end{rem}

\begin{defi}\normalfont[$\MO$-Symbols of operators]
The $\MO$-symbol of a linear continuous $\MO$–admissible operator $$A: C^\infty_\MO (\Ombar) \to \mathcal{D}_\MO'(\Omega),$$ is defined by $$\sigma_A (x,\xi) := u_\xi^{-1} (x) \int_\Omega K_A (x,y) u_\xi (y) dy.$$
\end{defi}

\begin{teo}
Let $$A: C^\infty_\MO (\Ombar) \to \mathcal{D}_\MO'(\Omega),$$ be a linear continuous $\MO$–admissible operator with $\MO$-symbol $\sigma_A \in \mathcal{D}_\MO'(\Omega) \otimes \mathcal{S}'(\I)$. Then
the $\MO$–quantisation $$Af (x) = \sum_{\xi \in \I} \sigma_A (x , \xi) \widehat{f} (\xi) u_\xi (x),$$ is true for every $f \in C^\infty_\MO (\Ombar)$. The $\MO$-symbol of $A$ can be written as $$\sigma_A (x,\xi) = u_\xi^{-1} (x) A u_\xi (x).$$
\end{teo}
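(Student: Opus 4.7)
The plan is to start from the Schwartz kernel representation of $A$, insert the Fourier inversion formula for $f$, and interchange the (distributional) order of summation and integration. Concretely, for $f \in C^\infty_\MO(\Ombar)$ the Schwartz kernel theorem (Theorem 3.5) gives
$$Af(x) = \int_\Omega K_A(x,y) f(y)\, dy,$$
and by Proposition 3.9 we may expand $f(y) = \sum_{\xi \in \I} \widehat f(\xi) u_\xi(y)$, where the convergence holds in $C^\infty_\MO(\Ombar)$. Substituting this expansion into the kernel representation, one formally gets $Af(x) = \sum_{\xi \in \I} \widehat f(\xi) \int_\Omega K_A(x,y) u_\xi(y)\, dy$.

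The first serious point is to justify the interchange. I would argue as follows: since $\widehat f \in \mathcal{S}(\I)$ decays faster than any polynomial in $\langle \xi \rangle$, and since the $\MO$-admissibility hypothesis says precisely that the formal series $\sum_{\eta} u_\eta^{-1}(x) u_\eta(z) \int_\Omega K_A(x,y) u_\eta(y)\, dy$ defines an element of $\mathcal{D}_\MO'(\Omega \times \Omega)$, the decay of $\widehat f$ together with the at-most-polynomial growth of the symbol (since $\sigma_A \in \mathcal{D}_\MO'(\Omega)\otimes \mathcal{S}'(\I)$) and of the $u_\xi$ (by Assumption (A)) makes the series $\sum_\xi \widehat f(\xi) \sigma_A(x,\xi) u_\xi(x)$ absolutely convergent in $\mathcal{D}_\MO'(\Omega)$. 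This is the technical step most likely to demand care, and I would handle it by testing against $\varphi \in C^\infty_{\MO^*}(\Ombar)$ and using Fubini on the resulting absolutely convergent double sum/integral.

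Once the interchange is established, I would recognise $\int_\Omega K_A(x,y) u_\xi(y)\, dy = Au_\xi(x)$ and rewrite the summand as
$$\widehat f(\xi)\, Au_\xi(x) = \widehat f(\xi)\, u_\xi^{-1}(x)\, Au_\xi(x)\, u_\xi(x) = \widehat f(\xi)\, \sigma_A(x,\xi)\, u_\xi(x),$$
using Definition 3.13 of the $\MO$-symbol. This yields the quantisation formula $Af(x) = \sum_{\xi \in \I} \sigma_A(x,\xi) \widehat f(\xi) u_\xi(x)$.

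For the alternative expression $\sigma_A(x,\xi) = u_\xi^{-1}(x) Au_\xi(x)$, I would simply apply the quantisation formula to $f = u_\eta$. By biorthogonality,
$$\widehat{u_\eta}(\xi) = \int_\Omega u_\eta(x)\overline{v_\xi(x)}\, dx = (u_\eta, v_\xi)_{L^2(\Omega)} = \delta_{\eta\xi},$$
so the quantisation formula collapses to $Au_\eta(x) = \sigma_A(x,\eta) u_\eta(x)$, and multiplying by $u_\eta^{-1}(x)$ gives the claimed identity. Renaming $\eta$ to $\xi$ concludes the proof.
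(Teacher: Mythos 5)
The paper itself does not include a proof of this theorem; it is recalled from \cite{N-H.AnalysisRT1}, so a direct comparison with the paper's argument is not possible. That said, your proof is correct and follows the route one would expect: Schwartz-kernel representation of $A$, Fourier expansion of $f$, interchange of summation and kernel pairing justified by rapid decay of $\widehat f$, tempered growth of $\sigma_A$, and the polynomial bound on $u_\xi$ from Assumption (A), and finally identification of $\int_\Omega K_A(x,y)u_\xi(y)\,dy$ with $Au_\xi(x)$.

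One small point worth tidying: your final step (applying the quantisation formula to $f=u_\eta$ and invoking biorthogonality) is logically redundant. Once you recognise $\int_\Omega K_A(x,y)u_\xi(y)\,dy = Au_\xi(x)$, the identity $\sigma_A(x,\xi)=u_\xi^{-1}(x)Au_\xi(x)$ is an immediate consequence of Definition 3.14 of the $\MO$-symbol plus the Schwartz kernel theorem applied to $u_\xi$ --- and in fact you already used exactly this chain of equalities in the middle of your step proving the quantisation formula when you rewrote $\widehat f(\xi)\,Au_\xi(x)$ as $\widehat f(\xi)\,\sigma_A(x,\xi)\,u_\xi(x)$. So the last paragraph reproves, via a detour through biorthogonality, something you already established directly. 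This is not a gap, only an inefficiency, but presenting it as a separate derivation makes the dependency structure look more complicated than it is: better to state the symbol identity first (directly from the definition and the kernel theorem) and then use it in the quantisation computation, rather than the other way around.

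The interchange step is, as you correctly flag, the only point where genuine care is needed, and your sketch (test against $\varphi\in C^\infty_{\MO^*}(\Ombar)$, use the tempered growth of $\langle\sigma_A(\cdot,\xi)u_\xi,\varphi\rangle$ in $\xi$ against the rapid decay of $\widehat f$, then apply Fubini) is the right strategy and is consistent with what the $\MO$-admissibility hypothesis is designed to guarantee.
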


In virtue of the above theorem, from now on we will be interested mainly in operators $A: C^\infty_\MO (\Ombar) \to \mathcal{D}_\MO'(\Omega)$ from the
class of $\MO$–admissible operators. However, in some cases we will consider a larger class. This is explained in the following remark.

\begin{rem}
Let $$A: Span\{u_\xi\} \subseteq  Dom(A)  \subseteq L^2(\Omega) \to L^2 (\Omega),$$ be a linear operator. If there exist a measurable function $\sigma_A : \Ombar \times \I \to \C$ such that \begin{align*}
    \sigma_A (x,\xi) u_\xi (x) = A u_\xi(x ), \tag{2}
\end{align*} then we note that the $\MO$-quantisation $$Af (x) = \sum_{\xi \in \I} \sigma_A (x , \xi) \widehat{f} (\xi) u_\xi (x), $$ is true for every $f \in Span\{u_\xi \}$, and  the function $\sigma(x, \xi)$ does not need to be in $\mathcal{D}_\MO'(\Omega) \otimes \mathcal{S}'(\I)$, in principle it is only necessary that $$\sigma_A (\cdot , \xi) \in L^2(\Ombar), \esp \text{for each} \esp \xi \in \I.$$ For this reason we will call linear operators $A$ that satisfy the condition (2) $\MO$-quantizable operators. The practical utility of this approach is reduced since it does not give enough information about the symbols to develop a symbolic calculus but, as we will show in Section 5, in some contexts this approach could be useful.
\end{rem}

Similarly, we recall the analogous notion of the $\MO^*$-quantisation.

\begin{defi}\normalfont
We say that the linear continuous operator $$A: C^\infty_{\MO^*} (\Ombar) \to \mathcal{D}_{\MO^*}'(\Omega),$$ 
belongs to the class of $\MO^*$–admissible operators if $$\sum_{\eta \in \I} v_\eta^{-1} (x) v_\eta (z) \int_{\Omega} \Tilde{K}_A (x,y) v_\eta (y) dy ,$$ is in $\mathcal{D}_{\MO^*}'(\Omega \times \Omega)$. For example, this is the case when the functions $v_\xi$ do not have any zeros in $\Omega$.
\end{defi}

So, from now on we will assume that operators $A: C^\infty_{\MO^*} (\Ombar) \to \mathcal{D}_{\MO^*}'(\Omega)$ are from the
class of $\MO$–admissible operators.

\begin{rem}
Similarly to Remark 3.13, note that the expression
$$v_\eta^{-1} (x) \int_\Omega \Tilde{K}_A (x,y) v_\eta (y) dy,$$
exists for any operator $A$ from the class of $\MO^*$–admissible operators. Moreover, it is in $\mathcal{D}_{\MO^*}'(\Omega) \otimes \mathcal{S}'(\I)$.
\end{rem}

\begin{defi}\normalfont[$\MO^*$-Symbols of operators]
The $\MO^*$-symbol of a linear continuous $\MO^*$–admissible operator $$A: C^\infty_{\MO^*} (\Ombar) \to \mathcal{D}_{\MO^*}'(\Omega),$$ is defined by $$\tau_A (x,\xi) := v_\xi^{-1} (x) \int_\Omega \Tilde{K}_A (x,y) v_\xi (y) dy.$$
\end{defi}

\begin{teo}
Let $$A: C^\infty_{\MO^*} (\Ombar) \to \mathcal{D}_{\MO^*}'(\Omega),$$ be a linear continuous $\MO^*$–admissible operator with $\MO^*$-symbol $\tau_A \in \mathcal{D}_{\MO^*}'(\Omega) \otimes \mathcal{S}'(\I)$. Then
the $\MO^*$–quantisation $$Af (x) = \sum_{\xi \in \I} \sigma_A (x , \xi) \widehat{f} (\xi) u_\xi (x),$$ is true for every $f \in C^\infty_L (\Ombar)$. The $\MO^*$-symbol of $A$ can be written as $$\tau_A (x,\xi) = v_\xi^{-1} (x) A v_\xi (x).$$
\end{teo}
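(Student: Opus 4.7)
The plan is to prove this theorem by mirroring the argument that establishes the $\MO$-quantisation in the preceding theorem, replacing the basis $\{u_\xi\}$ with its biorthogonal partner $\{v_\xi\}$ and $\widehat{f}$ with $\widehat{f}_*$ throughout. First I would invoke the $\MO^*$-part of the Schwartz kernel theorem to write
\[
A f(x) = \int_\Omega \tilde K_A(x,y)\, f(y)\,dy,
\]
for $f \in C^\infty_{\MO^*}(\Ombar)$, interpreted in the distributional sense.

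Next, the conjugate Fourier inversion formula from Proposition 3.6 applied to $f \in C^\infty_{\MO^*}(\Ombar)$ gives
\[
f(y) = \sum_{\xi\in\I} \widehat{f}_*(\xi)\, v_\xi(y).
\]
Substituting into the kernel representation and exchanging the order of summation and integration would yield
\[
A f(x) = \sum_{\xi\in\I} \widehat{f}_*(\xi)\int_\Omega \tilde K_A(x,y)\, v_\xi(y)\,dy.
\]
Multiplying and dividing by $v_\xi(x)$ then produces the factor $\tau_A(x,\xi)$ from the definition of the $\MO^*$-symbol, so
\[
A f(x) = \sum_{\xi\in\I} \tau_A(x,\xi)\, \widehat{f}_*(\xi)\, v_\xi(x),
\]
which is the $\MO^*$-quantisation formula (the statement above contains an apparent typographical slip: in the $\MO^*$-setting $\tau_A$, $\widehat{f}_*$ and $v_\xi$ should appear in place of $\sigma_A$, $\widehat{f}$ and $u_\xi$). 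For the identity $\tau_A(x,\xi) = v_\xi^{-1}(x)\, A v_\xi(x)$, I would either test the quantisation against $f = v_\eta$, for which biorthogonality yields $\widehat{f}_*(\xi)= \delta_{\xi\eta}$, or, equivalently, apply $A$ directly to $v_\xi$ using the kernel representation and compare with the very definition of $\tau_A$.

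The main obstacle is justifying the interchange of sum and integral above, and this is precisely where the $\MO^*$-admissibility hypothesis earns its keep: by the definition of that class together with the accompanying remark, the expression
\[
v_\eta^{-1}(x)\int_\Omega \tilde K_A(x,y)\, v_\eta(y)\,dy
\]
is a well defined element of $\mathcal{D}_{\MO^*}'(\Omega)\otimes\mathcal{S}'(\I)$, which supplies the exact distributional framework needed to pair the formal series with test functions and legitimize the swap. Convergence of the resulting sum is to be understood in $\mathcal{D}_{\MO^*}'(\Omega)$, using the standing assumption $\tau_A \in \mathcal{D}_{\MO^*}'(\Omega)\otimes\mathcal{S}'(\I)$ together with the fact that $\widehat{f}_* \in \mathcal{S}(\I)$ by the $\MO^*$-Fourier bijection. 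Once this distributional bookkeeping is in place, the remaining steps are purely algebraic.
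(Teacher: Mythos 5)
The paper itself does not supply a proof of this theorem: Section 3 is a preliminaries section, and both Theorem 3.15 and Theorem 3.20 are recalled from \cite{N-H.AnalysisRT1, N-H.AnalysisRT2} (the paper simply points the reader there after the Schwartz kernel theorem). So there is no internal proof to compare against line by line; what I can assess is whether your reconstruction is sound and consistent with the framework the paper sets up.

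Your argument is correct and follows the natural route. The chain
\[
A f(x) = \int_\Omega \Tilde{K}_A(x,y) f(y)\,dy
= \sum_{\xi\in\I}\widehat{f}_*(\xi)\int_\Omega \Tilde{K}_A(x,y)v_\xi(y)\,dy
= \sum_{\xi\in\I}\tau_A(x,\xi)\,\widehat{f}_*(\xi)\,v_\xi(x)
\]
uses exactly the three ingredients that are available: the $\MO^*$-half of Theorem 3.5, the conjugate Fourier inversion formula from Proposition 3.7 (you wrote 3.6 — minor numbering slip), and Definition 3.19 of the $\MO^*$-symbol. Using $\MO^*$-admissibility plus Remark 3.18 to place $v_\xi^{-1}(x)\int_\Omega\Tilde{K}_A(x,y)v_\xi(y)\,dy$ in $\mathcal{D}_{\MO^*}'(\Omega)\otimes\mathcal{S}'(\I)$, and interpreting convergence in $\mathcal{D}_{\MO^*}'(\Omega)$ against the rapid decay of $\widehat{f}_*\in\mathcal{S}(\I)$, is precisely the role that hypothesis plays. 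Your derivation of $\tau_A(x,\xi)=v_\xi^{-1}(x)Av_\xi(x)$ either by testing against $f=v_\eta$ (so that $\widehat{f}_*(\xi)=\delta_{\xi\eta}$ by biorthogonality) or directly from the definition is also fine. You are likewise right that the displayed formula in the statement is a typographical error: the $\MO^*$-quantisation must read with $\tau_A$, $\widehat{f}_*$, $v_\xi$, as the paper's own Remark 3.21 confirms; the same is true of the domain, which should be $C^\infty_{\MO^*}(\Ombar)$, not $C^\infty_L(\Ombar)$. No gaps.
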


\begin{rem}
Let $$A: Span\{v_\xi\} \subseteq  Dom(A)  \subseteq L^2(\Omega) \to L^2 (\Omega),$$ be a linear operator. Similarly to Remark 3.16, if there exist a measurable function $\tau_A : \Ombar \times \I \to \C$ such that \begin{align*}
    \tau_A (x,\xi) v_\xi (x) = A v_\xi(x ), \tag{3}
\end{align*} then we note that the $\MO^*$-quantisation $$Af (x) = \sum_{\xi \in \I} \tau_A (x , \xi) \widehat{f}_* (\xi) v_\xi (x), $$ is true for every $f \in Span\{v_\xi \}$, and  the function $\tau_A (x, \xi)$ does not need to be in $\mathcal{D}_{\MO^*}'(\Omega) \otimes \mathcal{S}'(\I)$, in principle it is only necessary that $$\tau_A (\cdot , \xi) \in L^2(\Ombar), \esp \text{for each} \esp \xi \in \I.$$ We will call linear operators $A$ that satisfy the condition (3) $\MO^*$-quantizable operators. 
\end{rem}

The quantizable operators whose symbol does not depend on the variable $x$ are especially important, and therefore receive a particular name.

\begin{defi}\normalfont
Let $A: Dom(A) \subset L^2(\Omega) \to L^2 (\Omega)$ be an $\MO$-quantizable operator. We will
say that $A$ is an $\MO$-Fourier multiplier if it satisfies $$\mathcal{F}_\MO (Af)(\xi) = \sigma (\xi) \widehat{f}(\xi), \esp \esp f \in Dom(A),$$ for some $\sigma: \I \to \C$. Analogously we define $\MO^*$-Fourier multipliers: Let $B:Dom(B) \subset L^2(\Omega) \to L^2 (\Omega)$ be a $\MO^*$-quantizable operator. We will say that $B$ is an 
$\MO^*$-Fourier multiplier
if it satisfies $$\mathcal{F}_{\MO^*} (Bf)(\xi) = \tau (\xi) \widehat{f}_* (\xi), \esp \esp f \in Dom(B),$$
for some $\tau: \I \to \C$.
\end{defi}

As in \cite[Proposition 3.6]{DELGADONHAnalysis}, we have the following simple relation between the symbols of
a Fourier multiplier and its adjoint.

\begin{teo}
The operator $A$ is an $\MO$-Fourier multiplier by $\sigma(\xi)$ if and only if $A^*$ is an $\MO^*$-Fourier multiplier by $\overline{\sigma(\xi)}$.
\end{teo}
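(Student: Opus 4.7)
The plan is to reduce the multiplier condition to the eigenvalue-type identity $A u_\xi = \sigma(\xi) u_\xi$, and then pass to the adjoint via the biorthogonality $(u_\xi,v_\eta)_{L^2(\Omega)} = \delta_{\xi\eta}$.

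First I would unpack the definition. Since $\widehat{u_\eta}(\xi) = (u_\eta,v_\xi)_{L^2(\Omega)} = \delta_{\eta\xi}$, applying the identity $\mathcal{F}_\MO(Af)(\xi) = \sigma(\xi)\widehat{f}(\xi)$ to $f=u_\eta$ and using the Fourier inversion formula shows that $A$ is an $\MO$-Fourier multiplier by $\sigma$ precisely when $A u_\eta = \sigma(\eta) u_\eta$ for every $\eta \in \I$. Symmetrically, $B$ is an $\MO^*$-Fourier multiplier by $\tau$ precisely when $B v_\xi = \tau(\xi) v_\xi$ for every $\xi \in \I$, since $\widehat{v_\xi}_*(\eta) = (v_\xi,u_\eta)_{L^2(\Omega)} = \delta_{\xi\eta}$.

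Next I would compute the relevant adjoint Fourier coefficients directly. Using the definition $\widehat{g}_*(\xi) = \int_\Omega g(x)\overline{u_\xi(x)}\,dx = (g,u_\xi)_{L^2(\Omega)}$ together with the $L^2$-adjoint relation, for $f\in \mathrm{Dom}(A^*)$ I find
\[
\widehat{A^*f}_*(\xi) = (A^*f,u_\xi)_{L^2(\Omega)} = (f,Au_\xi)_{L^2(\Omega)} = (f,\sigma(\xi)u_\xi)_{L^2(\Omega)} = \overline{\sigma(\xi)}\,\widehat{f}_*(\xi),
\]
which is exactly the statement that $A^*$ is an $\MO^*$-Fourier multiplier by $\overline{\sigma(\xi)}$. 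Equivalently one can verify $A^* v_\xi = \overline{\sigma(\xi)}\,v_\xi$ by expanding $A^*v_\xi$ against the biorthogonal system: $(u_\eta,A^*v_\xi)_{L^2(\Omega)} = (Au_\eta,v_\xi)_{L^2(\Omega)} = \sigma(\eta)\delta_{\eta\xi}$, from which the coefficient against $v_\xi$ is forced to be $\overline{\sigma(\xi)}$.

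The converse direction is entirely symmetric: assuming $A^*$ is an $\MO^*$-Fourier multiplier by a function $\tau(\xi)$, the same computation with the roles of $\{u_\xi\}$ and $\{v_\xi\}$ interchanged yields $(A^{*})^* v$-identities that force $A u_\xi = \overline{\tau(\xi)}u_\xi$, so $A$ is an $\MO$-Fourier multiplier with symbol $\overline{\tau(\xi)}$, giving $\sigma(\xi) = \overline{\tau(\xi)}$ and hence $\tau(\xi) = \overline{\sigma(\xi)}$. I do not anticipate a substantive obstacle beyond making sure $\mathrm{Span}\{u_\xi\}$ and $\mathrm{Span}\{v_\xi\}$ lie in the appropriate domains so that the adjoint manipulation is legitimate; since the Fourier inversion formulas of Proposition~3.9 and the biorthogonality built into Assumption~(A) are the only ingredients used, the argument is essentially bookkeeping once the eigenvector reformulation is in place.
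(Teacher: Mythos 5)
Your argument is correct and is exactly the natural bookkeeping argument: the reduction of the multiplier condition to the eigenvector identity $Au_\eta=\sigma(\eta)u_\eta$ via $\widehat{u_\eta}(\xi)=\delta_{\eta\xi}$ and Fourier inversion (Proposition~3.9) is right, the adjoint step $\widehat{A^*f}_*(\xi)=(A^*f,u_\xi)=(f,Au_\xi)=\overline{\sigma(\xi)}\widehat{f}_*(\xi)$ is right, and the equivalent verification $A^*v_\xi=\overline{\sigma(\xi)}v_\xi$ from $(u_\eta,A^*v_\xi)=\sigma(\eta)\delta_{\eta\xi}$ and the biorthogonal expansion is right. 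Note that the paper itself does not supply a proof of this theorem but instead cites Proposition~3.6 of \cite{DELGADONHAnalysis}; your write-up is precisely the short proof one would find there, so there is nothing to contrast. The one point you flag (domains) is handled automatically: since $f\mapsto(Af,v_\xi)=\sigma(\xi)(f,v_\xi)$ is $L^2$-bounded, each $v_\xi$ lies in $\mathrm{Dom}(A^*)$, so the adjoint manipulation is legitimate.
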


Another useful result about $\MO$-Fourier multipliers is the following:

\begin{lema}
Let $A$ be an $\MO$-Fourier multiplier with symbol $\sigma(\xi)$. Then $A$ extends to a compact operator in $L^2 (\Omega)$ if and only if $$\lim_{|\xi| \to \infty} |\sigma (\xi)| = 0.$$
\end{lema}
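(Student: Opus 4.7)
The plan is to treat the two implications separately, exploiting that an $\MO$-Fourier multiplier acts diagonally on the Fourier side so that its $L^2(\Omega)$-operator norm is controlled by the supremum of the symbol through the Riesz basis estimates of Lemma 3.8.

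For sufficiency, assuming $|\sigma(\xi)| \to 0$ as $|\xi| \to \infty$, I would note that for every $N \in \N$ the level set $F_N := \{\xi \in \I : |\sigma(\xi)| \geq 1/N\}$ is finite, and introduce the truncated multiplier
$$A_N f(x) := \sum_{\xi \in F_N} \sigma(\xi)\, \widehat{f}(\xi)\, u_\xi(x) = \sum_{\xi \in F_N} \sigma(\xi)\, ( f , v_\xi )_{L^2(\Omega)}\, u_\xi(x),$$
which is a finite sum of bounded rank-one operators (each map $f \mapsto (f,v_\xi)_{L^2}$ is bounded since $\norm{v_\xi}_{L^2} = 1$), hence bounded and of finite rank on $L^2(\Omega)$. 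The difference $A - A_N$ is the $\MO$-Fourier multiplier with symbol $\sigma(\xi)\mathbf{1}_{\I \setminus F_N}(\xi)$, so applying Lemma 3.8 on both sides gives
$$k_1^2 \norm{(A-A_N) f}_{L^2(\Omega)}^2 \leq \sum_{\xi \notin F_N} |\sigma(\xi)|^2 |\widehat{f}(\xi)|^2 \leq \frac{1}{N^2} \sum_{\xi \in \I} |\widehat{f}(\xi)|^2 \leq \frac{K_1^2}{N^2} \norm{f}_{L^2(\Omega)}^2,$$
so $\norm{A - A_N}_{L^2 \to L^2} \to 0$ and $A$ is a norm limit of finite-rank operators, hence compact.

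For necessity, I would argue by contradiction: assume $A$ is compact and suppose there exist $\varepsilon > 0$ and $\xi_n \in \I$ with $|\xi_n| \to \infty$ and $|\sigma(\xi_n)| \geq \varepsilon$. Biorthogonality gives $\widehat{u_\xi}(\eta) = ( u_\xi , v_\eta )_{L^2(\Omega)} = \delta_{\xi \eta}$, so the $\MO$-quantisation formula yields $A u_\xi = \sigma(\xi) u_\xi$. I then plan to show $u_{\xi_n} \rightharpoonup 0$ weakly in $L^2(\Omega)$: for every $g \in L^2(\Omega)$,
$$( u_{\xi_n} , g )_{L^2(\Omega)} = \overline{\widehat{g}_* (\xi_n)},$$
which tends to $0$ because Lemma 3.8 ensures $\widehat{g}_* \in \ell^2(\I)$ while $\xi_n$ eventually leaves every finite subset of $\I \subset \Z^l$. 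Compactness of $A$ then forces $A u_{\xi_n} \to 0$ in $L^2(\Omega)$, contradicting $\norm{A u_{\xi_n}}_{L^2} = |\sigma(\xi_n)|\, \norm{u_{\xi_n}}_{L^2} \geq \varepsilon$, since $\norm{u_\xi}_{L^2} = 1$ by Assumption (A).

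The main subtlety I expect is the weak convergence $u_{\xi_n} \rightharpoonup 0$: for an orthonormal basis this is just Bessel's inequality, but since the $u_\xi$ only form a Riesz basis the argument must pass through the biorthogonal system, rewriting the pairing with $g$ as an $\MO^*$-Fourier coefficient of $g$ and then invoking the $\ell^2(\I)$-summability from Lemma 3.8.
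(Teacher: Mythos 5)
Your proof is correct. The paper states this lemma without proof and simply invokes it inside Theorem~4.2, so there is no in-paper argument to compare against; what you have written is essentially the standard argument for diagonalisable operators, adapted with appropriate care to the biorthogonal Riesz-basis setting. Both directions are sound: the finite-rank truncation combined with the two-sided bounds of Lemma~3.8 gives sufficiency, and for necessity you correctly identify the eigenvector relation $A u_\xi = \sigma(\xi) u_\xi$ from biorthogonality and establish the weak convergence $u_{\xi_n} \rightharpoonup 0$ by rewriting $( u_{\xi_n}, g )_{L^2}$ as $\overline{\widehat{g}_*(\xi_n)}$ and using that $\widehat{g}_* \in \ell^2(\I)$, which is exactly the point that has to replace Bessel's inequality when the basis is not orthonormal. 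One cosmetic remark: in the first inequality of the sufficiency chain you apply Lemma~3.8 to $g := (A-A_N)f$, which tacitly presupposes $g \in L^2(\Omega)$; this is harmless, since for bounded $a(\xi)$ the series $\sum_\xi a(\xi)\widehat{f}(\xi) u_\xi$ converges in $L^2(\Omega)$ by the Riesz-basis property and has $\MO$-Fourier coefficients $a(\xi)\widehat{f}(\xi)$, but it is worth stating this explicitly rather than reading the bound as producing the membership it quietly uses.
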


\subsection{Difference operators and Hörmander classes} In this subsection we recall difference operators, that are instrumental in defining
symbol classes for the symbolic calculus of operators. After that we recall the definition of Hörmander classes corresponding to the present setting.

\begin{defi}\normalfont[$\MO$-stongly admissible functions]
Define $$C_b^\infty (\Omega \times \Omega) : =C^\infty (\Omega \times \Omega) \cap C(\Ombar \times \Ombar),$$ and let $q_j \in C_b^\infty (\Omega \times \Omega)$, $j=1,...,l,$ be a given family of smooth functions. We will call the collection of $q_j$'s $\MO$-strongly admissible if the following properties hold:

\begin{itemize}
    \item For every $x \in \Omega$ the multiplication by $q_j(x , \cdot)$ is a continous linear mapping on $C^\infty_\MO (\Ombar)$ for all $j=1,..,l;$
    \item $q_j (x,x) = 0$ for all $j=1,..,l$;
    \item rank$(\nabla_y q_1 (x,y),...,\nabla_y q_l (x,y))|_{y=x} = d := dim (\Omega)$;
    \item the diagonal in $\Omega \times \Omega$ is the only set when all of $q_j$'s vanish: $$\bigcap_{j=1}^{l} \{(x,y) \in \Omega \times \Omega : \esp q_j (x,y)=0\} = \{ (x,x) : \esp x \in \Omega \}.$$ 
\end{itemize}
\end{defi}
 The collection of $q_j$'s with the above properties generalises the notion of a strongly admissible collection of functions for difference operators introduced in \cite{RTWHorman} in the context of compact Lie groups. We will use the multi-index notation 
 $$q^\alpha (x,y):= q_1^{\alpha_1} (x,y) \cdot \cdot \cdot q_l^{\alpha_l} (x,y).$$
 
 \begin{defi}\normalfont[$\MO^*$-admissible operators]
 Analogously, the notion of an $\MO^*$-strongly admissible collection suitable for the conjugate problem is that of a family $\Tilde{q}_j \in C_b^\infty (\Omega \times \Omega)$, $j=1,...,l,$ satisfying the properties:
 
 \begin{itemize}
    \item For every $x \in \Omega$ the multiplication by $\Tilde{q}_j(x , \cdot)$ is a continous linear mapping on $C^\infty_{\MO^*} (\Ombar)$ for all $j=1,..,l;$
    \item $\Tilde{q}_j (x,x) = 0$ for all $j=1,..,l$;
    \item rank$(\nabla_y \Tilde{q}_1 (x,y),...,\nabla_y \Tilde{q}_l (x,y))|_{y=x} = d := dim (\Omega)$;
    \item the diagonal in $\Omega \times \Omega$ is the only set when all of $\Tilde{q}_j$'s vanish: $$\bigcap_{j=1}^{l} \{(x,y) \in \Omega \times \Omega : \esp \Tilde{q}_j (x,y)=0\} = \{ (x,x) : \esp x \in \Omega \}.$$ 
\end{itemize}
\end{defi}

We also write $$\Tilde{q}^\alpha (x,y):= \Tilde{q}_1^{\alpha_1} (x,y) \cdot \cdot \cdot \Tilde{q}_l^{\alpha_l} (x,y).$$
 From now on we will always assume that the appearing collections are strongly admissible. We now record the Taylor expansion formula with respect to a family of $q_j$'s, which follows from expansion of functions $g$ and $q^\alpha (x,\cdot)$ by the common Taylor series:
 
 \begin{pro}
 Any smooth function $g \in C^\infty (\Omega)$ can be approximated by Taylor polynomial type expansion i.e. for $x \in \Omega$, we have $$g(y) =  \sum_{|\alpha|< N} \frac{1}{\alpha!} D_y^{(\alpha)} g(y)|_{y=x} q^\alpha (x,y) + \sum_{|\alpha|=N} \frac{1}{\alpha!} q^\alpha (x,y) g_N (y),$$ in a neighbourhood of $x \in \Omega$, where $g_N  \in C^\infty (\Omega)$ and $D_y^{(\alpha)}g(y)|_{y=x} $ can be found from the recurrent formula: $D_y^{(0,..,0)} := I$ and for $\alpha \in \N_0^l,$ $$\partial_y^{\beta} g(y)|_{y=x} = \sum_{|\alpha| \leq |\beta|} \frac{1}{\alpha!}[\partial_y^{\beta} q^\alpha (x,y)]\big|_{y=x} D^{(\alpha)}_y g (y)|_{y=x},$$ where $\beta = (\beta_1 , ... , \beta_n)$. Analogously, any function $C^\infty (\Omega)$ can be approximated by Taylor polynomial type expansions corresponding to the adjoint problem, i.e. we have $$g(y) = \sum_{|\alpha| < N} \frac{1}{\alpha ! } \Tilde{D}_y^{(\alpha)} g(y) |_{y=x} \Tilde{q}^{\alpha} (x , y) + \sum_{|\alpha|=N} \frac{1}{\alpha!} \Tilde{q}^{\alpha} (x,y) g_N (y),$$ in a neighborhood of $x \in \Omega$, where $g_N (y) \in C^\infty (\Omega)$ and $\Tilde{D}_y^{(\alpha)}g(y)|_{y=x} $ can be found from the recurrent formula: $\Tilde{D}_y^{(0,..,0)} := I$ and for $\alpha \in \N_0^l,$ $$\partial_y^{\beta} g(y)|_{y=x} = \sum_{|\alpha| \leq |\beta|} \frac{1}{\alpha!}[\partial_y^{\beta} q^\alpha (x,y)]\big|_{y=x} \Tilde{D}^{(\alpha)}_y g (y)|_{y=x},$$ where $\beta = (\beta_1 , ... , \beta_n).$
 \end{pro}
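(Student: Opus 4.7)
The proof is set up by the hint already given just before the statement: ``expansion of functions $g$ and $q^\alpha(x,\cdot)$ by the common Taylor series''. My plan is to write down two classical Taylor expansions and match them term by term.

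First, I would fix $x\in\Omega$ and write the ordinary Taylor expansion of $g$ in powers of $(y-x)$, say of order $N$, with a standard smooth remainder. Separately, for each multi-index $\alpha\in\N_0^l$ I would expand the smooth function $y\mapsto q^\alpha(x,y)$ about $y=x$ in powers of $(y-x)$. The key observation at this step is that $q_j(x,x)=0$ for every $j$, so $q_j(x,y)=O(|y-x|)$ and hence $q^\alpha(x,y)=O(|y-x|^{|\alpha|})$. Consequently, in the expansion of $q^\alpha(x,\cdot)$ the coefficient of $(y-x)^\beta$ vanishes whenever $|\beta|<|\alpha|$, and the nontrivial coefficients are precisely $\tfrac{1}{\beta!}\bigl[\partial_y^\beta q^\alpha(x,y)\bigr]\big|_{y=x}$ for $|\beta|\ge|\alpha|$.

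Next, I would substitute the second expansion into the candidate $q$-Taylor formula and collect the coefficient of $(y-x)^\beta$ for each $\beta$ with $|\beta|<N$. Matching with the coefficient $\tfrac{1}{\beta!}\partial_y^\beta g(y)|_{y=x}$ in the classical Taylor series of $g$ produces exactly the recurrence
$$\partial_y^{\beta} g(y)|_{y=x} = \sum_{|\alpha| \leq |\beta|} \frac{1}{\alpha!}\bigl[\partial_y^{\beta} q^\alpha (x,y)\bigr]\big|_{y=x}\, D^{(\alpha)}_y g (y)|_{y=x},$$
which can be read either as a definition of the $D^{(\alpha)}_y g(x)$ in terms of the ordinary derivatives, or conversely. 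The recurrence is triangular in $|\beta|$, so its solvability reduces to the invertibility of the top-block with $|\alpha|=|\beta|$. For those indices the entries $\bigl[\partial_y^{\beta} q^\alpha(x,y)\bigr]\big|_{y=x}$ are polynomials in the Jacobian entries $\partial_{y_k}q_j(x,y)|_{y=x}$, and the rank condition in Definition 3.21 (rank $=d$) guarantees that this polynomial matrix admits a right inverse, so the $D^{(\alpha)}$ can be solved for recursively on $|\alpha|$.

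Finally, I would treat the remainder. The classical remainder is a sum of terms $\tfrac{1}{\beta!}(y-x)^\beta h_\beta(y)$ with $|\beta|=N$; using the same rank argument that produced the invertibility at the top level, each monomial $(y-x)^\beta$ with $|\beta|=N$ can be rewritten, in a neighbourhood of $x$, as $\sum_{|\alpha|=N}\tfrac{1}{\alpha!}q^\alpha(x,y)\,\varphi_{\beta,\alpha}(y)$ with smooth $\varphi_{\beta,\alpha}$. Rearranging gives the remainder in the desired form $\sum_{|\alpha|=N}\tfrac{1}{\alpha!}q^\alpha(x,y)g_N(y)$ with $g_N\in C^\infty(\Omega)$. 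The proof of the $\MO^*$-variant with $\widetilde q_j$ and $\widetilde D^{(\alpha)}$ is verbatim the same, using the corresponding rank hypothesis in Definition 3.22.

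The main obstacle I anticipate is the case $l>d$, where the $\alpha$-multi-indices outnumber the $\beta$-multi-indices and the linear system determining $D^{(\alpha)}$ is underdetermined. This does not break the statement, because any consistent choice produces a valid expansion; what the rank condition guarantees is that \emph{at least one} solution of the recurrence exists. I would make this non-uniqueness explicit (e.g.\ by selecting $d$ of the $q_j$'s whose $\nabla_y$'s span at $y=x$ and using them as local coordinates to fix a canonical choice), so that the remainder estimate is not affected.
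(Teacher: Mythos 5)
The paper does not actually prove this proposition; it merely recalls it (from \cite{N-H.AnalysisRT1, N-H.AnalysisRT2}) with the one-line remark that it ``follows from expansion of functions $g$ and $q^\alpha(x,\cdot)$ by the common Taylor series.'' Your argument is a correct and faithful expansion of exactly that hint: you Taylor-expand both $g$ and each $q^\alpha(x,\cdot)$ about $y=x$, exploit $q_j(x,x)=0$ to see that $q^\alpha(x,y)=O(|y-x|^{|\alpha|})$ so that the coefficient matrix is block-triangular in the total degree, match coefficients of $(y-x)^\beta$ to obtain precisely the recurrence in the statement, and use the rank condition together with a Hadamard-type factorisation $(y_k-x_k)=\sum_j\phi_{kj}(x,y)q_j(x,y)$ to rewrite the classical remainder in the required $q^\alpha$-form. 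Your remark on the underdetermined case $l>d$ (right-invertibility of the wide full-rank block, with a canonical choice by selecting $d$ of the $q_j$'s as local coordinates) is a genuine and correctly handled subtlety that the paper glosses over, and the observation that the $\MO^*$-variant is verbatim the same with $\Tilde{q}_j$ in place of $q_j$ is right (the displayed recurrence in the statement writes $q^\alpha$ where $\Tilde{q}^\alpha$ is clearly intended -- a typo in the source you were right to read past).
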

 
It can be seen that operators $D^{(\alpha)}$ and $\Tilde{D}^{(\alpha)}$ are differential operators of order $|\alpha|$, and that $\partial_x^{\alpha}$ can be expressed in terms of $D^{(\alpha)}$ or $\Tilde{D}^{(\alpha)}$ as linear combination with smooth bounded coefficients. This fact will be important for Proposition 3.32. Now that we have recalled the Taylor expansion formula we  recall the definition of difference operators \cite{N-H.AnalysisRT1, N-H.AnalysisRT2}.
\begin{defi}\normalfont
Let $$A:C^\infty_\MO (\Ombar) \to \mathcal{D}_\MO' (\Omega),$$ be an $\MO$-admissible operator with the symbol $\sigma_A \in \mathcal{D}_\MO' (\Omega) \otimes \mathcal{S}'(\I)$ and with the Schwartz kernel $K_A \in \mathcal{D}_\MO' (\Omega \times \Omega)$. Then the difference operator $$\Delta_q^\alpha : \mathcal{D}_\MO' (\Omega)\otimes \mathcal{S}'(\I) \to \mathcal{D}_\MO' (\Omega)\otimes \mathcal{S}'(\I),$$ acting on $\MO$-symbols by $$\Delta_q^\alpha \sigma_A (x , \xi):= u_\xi^{-1}  (x) \int_\Omega q^\alpha (x,y) K_A (x,y) u_\xi (y) dy,$$ is well defined. Analogously, for a $\MO^*$–admissible operator $$A:C^\infty_{\MO^*} (\Ombar) \to \mathcal{D}_{\MO^*}' (\Omega),$$ with symbol $\tau_A \in \mathcal{D}_{\MO^*}'(\Omega) \otimes \mathcal{S}'(\I)$ and with the Schwartz kernel $\Tilde{K}_A \in \mathcal{D}_{\MO^*}'(\Omega \times \Omega),$ the difference operator $$\Tilde{\Delta}_q^\alpha : \mathcal{D}_{\MO^*}' (\Omega)\otimes \mathcal{S}'(\I) \to \mathcal{D}_{\MO^*}' (\Omega)\otimes \mathcal{S}'(\I),$$ acting on $\MO^*$-symbols by $$\Tilde{\Delta}_q^\alpha \tau_A (x , \xi):= v_\xi^{-1}  (x) \int_\Omega \Tilde{q}^\alpha (x,y) \Tilde{K}_A (x,y) v_\xi (y) dy,$$ is well defined.
\end{defi}
Using such difference operators and derivatives $D^{(\alpha)}$
from Proposition 3.3 it is possible to define classes of symbols.

\begin{defi}\normalfont[Symbol classes $S^m_{\rho, \delta} (\Ombar \times \I)$] The $\MO$-symbol class $S^m_{\rho, \delta} (\Ombar \times \I)$ consists of such symbols $\sigma (x , \xi)$ which are in $C^\infty_\MO (\Ombar)$ for all $\xi \in \I$, and which satisfy $$|\Delta_q^{\alpha} D_x^{(\beta)} \sigma (x, \xi)| \leq C_{\sigma, \alpha, \beta, m} \langle \xi \rangle^{m - \rho|\alpha| + \delta|\beta|} , $$
for all $x \in \Ombar$, for all $\alpha , \beta \geq 0$, and for all $\xi \in \I$. Furthermore, we define $$S^\infty_{\rho , \delta} (\Ombar \times \I) := \bigcup_{m \in \R} S^m_{\rho , \delta} (\Ombar \times \I)$$ and $$S^{- \infty} (\Ombar \times \I) := \bigcap_{m \in \R} S^m_{1 , 0} (\Ombar \times \I).$$Analogously, we define the $\MO^*$-symbol class $\Tilde{S}^m_{\rho , \delta} (\Ombar \times \I)$ as the space of those functions $\tau (x , \xi)$ which are in $C^\infty_{\MO^*} (\Ombar)$  for all $\xi \in \I$, and wich satisfy $$\big| \Tilde{\Delta}_{q}^{\alpha} \Tilde{D}_x^{(\beta)} \tau (x , \xi) \big| \leq C_{\tau, \alpha, \beta, m} \langle \xi \rangle^{m - \rho |\alpha| + \delta |\beta|},$$ for all $x \in \Ombar$ for all $\alpha , \beta \geq 0 $, and for all $\xi \in \I$. Similarly one defines the classes $\Tilde{S}^\infty_{\rho , \delta} (\Ombar \times \I)$ and $\Tilde{S}^{- \infty} (\Ombar \times \I)$.
\end{defi}

As usual, for symbols in a Hörmander class we have a symbolic calculus \cite{N-H.AnalysisRT1}. In what follows $Op_\MO (S^m_{\rho, \delta} (\Ombar \times \I))$ and $Op_{\MO^*} (\Tilde{S}^m_{\rho, \delta} (\Ombar \times \I))$ will denote the collection of linear operators with symbols in the Hörmander classes  $S^m_{\rho, \delta} (\Ombar \times \I)$ and $\Tilde{S}^m_{\rho, \delta} (\Ombar \times \I)$ respectively, defined by quantization in Theorem 3.15 and Theorem 3.20 .

\begin{lema}[Composition formula]
Let $m_1 , m_2 \in \R $ and $\rho > \delta \geq 0$. Let $A,B: C^\infty_\MO (\Ombar) \to C^\infty_\MO (\Ombar)$ be continous and linear, and assume that their $\MO$-symbols satisfy \begin{align*}
    |\Delta^\alpha_q \sigma_A (x , \xi)| \leq C_\alpha \langle \xi \rangle^{m_1 - \rho |\alpha|}, \\ |D_x^{(\beta)} \sigma_B (x , \xi)| \leq C_{\beta} \langle \xi \rangle^{m_2 + \delta |\beta|},
\end{align*}
for all $\alpha , \beta \geq 0$, uniformly in $x \in \Ombar$ and $\xi \in \I$. Then $$\sigma_{AB} (x , \xi) \sim \sum_\alpha \frac{1}{\alpha} \Delta_q^\alpha \sigma_A (x , \xi) D_x^{(\alpha)} \sigma_B (x , \xi),$$ where the asymptotic expansion means that for every $N \in \N$ we have $$\big| \sigma_{AB} (x , \xi) - \sum_{|\alpha|< N}\frac{1}{\alpha} \Delta_q^\alpha \sigma_A (x , \xi) D_x^{(\alpha)} \sigma_B (x , \xi) \big| \leq C_N \langle \xi \rangle^{m_1 + m_2 - (\rho - \delta)N}.$$
\end{lema}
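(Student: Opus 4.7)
The plan is to start from the intrinsic expression for the symbol of a composition given by Theorem~3.15, namely $\sigma_{AB}(x,\xi) = u_\xi^{-1}(x) (AB u_\xi)(x)$. Since $B u_\xi(x) = \sigma_B(x,\xi) u_\xi(x)$, applying $A$ to the function $\sigma_B(\cdot,\xi) u_\xi(\cdot)$ and invoking the Schwartz kernel theorem (Theorem 3.6) yields the integral representation
$$\sigma_{AB}(x,\xi) = u_\xi^{-1}(x) \int_\Omega K_A(x,y)\, \sigma_B(y,\xi)\, u_\xi(y)\, dy.$$
This is the formula I would take as the starting point, since it isolates the interaction between the kernel of $A$ and the pointwise behaviour of $\sigma_B$ in the first variable.

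Next I would expand $\sigma_B(y,\xi)$ in the $y$-variable around $y=x$ using the Taylor expansion of Proposition~3.32 with respect to the strongly admissible family $q_j$:
$$\sigma_B(y,\xi) = \sum_{|\alpha|<N} \frac{1}{\alpha!} D_x^{(\alpha)} \sigma_B(x,\xi)\, q^\alpha(x,y) + \sum_{|\alpha|=N} \frac{1}{\alpha!} q^\alpha(x,y)\, \sigma_{B,N}(y,\xi),$$
where the remainder coefficients $\sigma_{B,N}(\cdot,\xi)$ inherit the same type of estimate as $D_x^{(N)}\sigma_B$, namely $O(\langle\xi\rangle^{m_2+\delta N})$ uniformly in the spatial variable. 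Substituting this expansion into the integral representation and recognising, by Definition~3.30, that
$$u_\xi^{-1}(x) \int_\Omega q^\alpha(x,y) K_A(x,y) u_\xi(y)\, dy = \Delta_q^\alpha \sigma_A(x,\xi),$$
one immediately obtains the desired asymptotic main term $\sum_{|\alpha|<N} \frac{1}{\alpha!} \Delta_q^\alpha \sigma_A(x,\xi)\, D_x^{(\alpha)} \sigma_B(x,\xi)$, with the remainder
$$R_N(x,\xi) = u_\xi^{-1}(x) \int_\Omega K_A(x,y) \sum_{|\alpha|=N} \frac{1}{\alpha!} q^\alpha(x,y)\, \sigma_{B,N}(y,\xi)\, u_\xi(y)\, dy.$$

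The principal obstacle, and the only non-bookkeeping step, is showing that
$$|R_N(x,\xi)| \leq C_N \langle\xi\rangle^{m_1 + m_2 - (\rho - \delta)N}.$$
The idea here is that the factor $q^\alpha(x,y)$ with $|\alpha|=N$ inside the integral against $K_A$ acts formally like applying $\Delta_q^\alpha$ to $A$, producing the gain $\langle\xi\rangle^{-\rho N}$ from the hypothesis on $\sigma_A$, while the Taylor remainder contributes at most $\langle\xi\rangle^{m_2+\delta N}$; the net decay is precisely $(\rho-\delta)N$. To turn this heuristic into a rigorous bound, I would first absorb the $y$-dependence of $\sigma_{B,N}$ by performing further Taylor expansions at $y=x$ so that the $y$-integration is reduced to pure difference operators acting on $\sigma_A$ (each additional step improves the $\xi$-decay by the positive amount $\rho-\delta$), and then truncate once the accumulated decay exceeds any fixed target. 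This iteration is the same mechanism that underlies the composition formula on compact Lie groups in \cite{RTWHorman}, and it goes through in the present setting thanks to the admissibility of the $q_j$ and the polynomial control on $u_\xi$ provided by Assumption~(A).

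Finally, since the expression for $\sigma_{AB}$ and the remainder estimate hold for every $N$, the full asymptotic expansion $\sigma_{AB}(x,\xi) \sim \sum_\alpha \frac{1}{\alpha!} \Delta_q^\alpha \sigma_A(x,\xi)\, D_x^{(\alpha)} \sigma_B(x,\xi)$ in the sense stated in the lemma follows at once.
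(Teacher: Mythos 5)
The paper itself does not prove this lemma: it is quoted as part of the symbolic calculus imported from \cite{N-H.AnalysisRT1}, so there is no in-text proof to compare against. Your plan is nonetheless the standard one and coincides with the argument used in that reference. You write $\sigma_{AB}(x,\xi) = u_\xi^{-1}(x)\,A\bigl[\sigma_B(\cdot,\xi)\,u_\xi(\cdot)\bigr](x)$, pass to the kernel representation of $A$, Taylor-expand $\sigma_B(y,\xi)$ at $y=x$ with respect to the strongly admissible family $q_j$, and recognise each main term as $\Delta_q^\alpha\sigma_A \cdot D_x^{(\alpha)}\sigma_B$ from the definition of the difference operator. That is all correct, apart from minor misnumbering: the Taylor expansion is Proposition~3.27 (not~3.32), the difference operator is Definition~3.28 (not~3.30), and the Schwartz kernel theorem is Theorem~3.5 (not~3.6).

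Where the plan stops short of a proof is exactly the step you yourself flag as the crux. The iteration you describe produces at every stage a further remainder of the same structure, with a $y$-dependent factor inside the integral against $K_A$; so ``truncate once the accumulated decay exceeds any fixed target'' does not close the argument, because after any finite number of steps there remains one residual integral that must be bounded independently. In the toroidal and compact Lie group versions this last estimate is supplied by off-diagonal kernel estimates for $A$, the point being that the factor $q^{\alpha'}(x,y)$ with $|\alpha'|$ large vanishes to high order on the diagonal and so tames the singularity of $K_A$. In the present non-harmonic setting the analogous estimate has to be engineered from the polynomial growth $\sup_x|u_\xi(x)| \le C_b\langle\xi\rangle^{\mu_0}$ in Assumption~(A), the summability in Assumption~(B), and Sobolev-type mapping properties of $A$; it is precisely for this reason that the hypotheses on the $u_\xi$ are imposed. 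Without that final estimate the outline establishes the form of the expansion but not the stated remainder bound $C_N\langle\xi\rangle^{m_1+m_2-(\rho-\delta)N}$, so you would need to make that step explicit to have a complete proof.
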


\begin{lema}[Adjoint formula] Let $0 \leq \delta < \rho \leq 1$. Let $A \in Op_\MO (S^m_{\rho, \delta} (\Ombar \times \I))$. Assume that the conjugate symbol class $\Tilde{S}^m_{\rho, \delta} (\Ombar \times \I)$ is defined with strongly admissible functions $\Tilde{q}_j (x, y) := \overline{q_j (x , y)}$ which are strongly $\MO$-admissible. Then the adjoint of $A$ satisfies $A^* \in Op_{\MO^*} (\Tilde{S}^m_{\rho, \delta} (\Ombar \times \I))$, with its $\MO^*$-symbol $\tau_{A^*} \in \Tilde{S}^m_{\rho, \delta} (\Ombar \times \I)$ having the asymptotic expansion $$\tau_{A^*} (x , \xi) \sim \sum_{\alpha} \frac{1}{\alpha !} \Tilde{\Delta}_{q}^\alpha D_x^{(\alpha)} \overline{\sigma_A (x , \xi)}.$$
\end{lema}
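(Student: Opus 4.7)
The plan is to compute the $\MO^*$-symbol $\tau_{A^*}(x,\xi) = v_\xi^{-1}(x)(A^* v_\xi)(x)$ directly, and to recognise the resulting expression as the claimed asymptotic series. First I would observe that the Schwartz kernels of $A$ and $A^*$ satisfy $\Tilde{K}_{A^*}(x,y) = \overline{K_A(y,x)}$, which comes from the adjoint identity $(Af,g)_{L^2(\Omega)} = (f,A^*g)_{L^2(\Omega)}$ via Fubini applied to test functions in $C^\infty_\MO(\Ombar)$ and $C^\infty_{\MO^*}(\Ombar)$. Reading off the spectral expansion from the $\MO$-quantisation (Theorem 3.15) gives $K_A(y,x) = \sum_{\eta \in \I} \sigma_A(y,\eta) u_\eta(y) \overline{v_\eta(x)}$, hence the working identity
\[
(A^* v_\xi)(x) = \sum_{\eta \in \I} v_\eta(x) \int_\Omega \overline{\sigma_A(y,\eta)}\, \overline{u_\eta(y)}\, v_\xi(y)\,dy.
\]

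Next I would apply the $\Tilde{q}$-Taylor expansion from Proposition 3.31 to $\overline{\sigma_A(y,\eta)}$ at $y=x$ up to order $N$, and substitute into the integral. The key bookkeeping is that, for each multi-index $\alpha$, the operator with $\MO^*$-symbol $\Tilde{D}_x^{(\alpha)}\overline{\sigma_A(x,\xi)}$ has kernel $\sum_\eta \Tilde{D}_x^{(\alpha)}\overline{\sigma_A(x,\eta)}\, v_\eta(x)\, \overline{u_\eta(y)}$ by the $\MO^*$-quantisation; hence multiplying by $\Tilde{q}^\alpha(x,y)$, integrating against $v_\xi(y)$, and dividing by $v_\xi(x)$ yields exactly $\Tilde{\Delta}_q^\alpha \Tilde{D}_x^{(\alpha)}\overline{\sigma_A(x,\xi)}$. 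Under the hypothesis $\Tilde{q}_j = \overline{q_j}$, a comparison of the $q$- and $\Tilde{q}$-Taylor expansions of $\sigma_A$ and its complex conjugate gives the identification $\Tilde{D}_x^{(\alpha)}\overline{\sigma_A} = \overline{D_x^{(\alpha)}\sigma_A}$, which is what allows one to write the expansion with $D_x^{(\alpha)}$ as in the statement. Summing over $|\alpha|<N$ yields the partial sum of the asymptotic series, and the $|\alpha|=N$ contribution is the remainder $\mathcal{R}_N(x,\xi)$.

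The main obstacle is to show that $\mathcal{R}_N \in \Tilde{S}^{m-(\rho-\delta)N}_{\rho,\delta}(\Ombar\times\I)$. I would treat it by viewing $\mathcal{R}_N$ as the $\MO^*$-symbol of an operator whose kernel has the form $\sum_{|\alpha|=N}\tfrac{1}{\alpha!}\Tilde{q}^\alpha(x,y)K_{A_{N,\alpha}}(x,y)$, where each auxiliary operator $A_{N,\alpha}$ has $\MO$-symbol in $S^m_{\rho,\delta}$ inherited from the Taylor remainder coefficient $g_{N,\alpha}(y,\eta)$. The factor $\Tilde{q}^\alpha$ produces a gain of $\langle\xi\rangle^{-\rho N}$ through the standard difference-operator estimates, while each application of $\Tilde{D}_x^{(\beta)}$ costs at most $\langle\xi\rangle^{\delta|\beta|}$, and the hypothesis $\rho>\delta$ ensures that the error term genuinely improves order by order. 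Two points deserve particular attention and constitute the technical heart of the argument: first, summing the $\eta$-series arising from the spectral decomposition uses Assumption (B) together with the pointwise bound $|u_\eta(y)|\leq C_b\langle\eta\rangle^{\mu_0}$ from Assumption (A), since the biorthogonal systems $\{u_\eta\}$, $\{v_\eta\}$ need not be orthonormal; and second, the iteration needed to bound $\Tilde{\Delta}_q^\beta \Tilde{D}_x^{(\gamma)}\mathcal{R}_N$ for all $\beta,\gamma$ must be organised so that derivatives acting on the Taylor remainder factors $g_{N,\alpha}$ do not spoil the order gain.
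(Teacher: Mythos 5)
The present paper states this adjoint formula as a recalled result (citing \cite{N-H.AnalysisRT1}) and gives no proof of its own, so there is no internal argument to compare against. Your route---transposing and conjugating the kernel, expanding $K_A(y,x)$ via the $\MO$-quantisation, applying the $\Tilde{q}$-Taylor expansion (Proposition 3.27, not 3.31) to $\overline{\sigma_A(y,\eta)}$ at $y=x$, and recognising each partial-sum term as $\frac{1}{\alpha!}\Tilde{\Delta}_q^\alpha\Tilde{D}_x^{(\alpha)}\overline{\sigma_A(x,\xi)}$---is the standard and correct one, and the two technical points you single out (summability of the $\eta$-series via Assumptions (A) and (B), and organising the iterated difference/derivative estimates on the remainder $\mathcal{R}_N$) are exactly where the real work lives.

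One step as you phrase it does not quite close. You derive the partial sum with $\Tilde{D}_x^{(\alpha)}\overline{\sigma_A}$, correctly note that under $\Tilde{q}_j=\overline{q_j}$ the coefficients of $\Tilde{D}^{(\alpha)}$ are the complex conjugates of those of $D^{(\alpha)}$, and conclude $\Tilde{D}_x^{(\alpha)}\overline{\sigma_A}=\overline{D_x^{(\alpha)}\sigma_A}$. That identity is correct, but it does not rewrite your expansion in the form $\Tilde{\Delta}_q^\alpha\,D_x^{(\alpha)}\overline{\sigma_A}$ displayed in the lemma: in general $\overline{D_x^{(\alpha)}\sigma_A}\neq D_x^{(\alpha)}\overline{\sigma_A}$, the two agreeing only when the coefficients of $D^{(\alpha)}$ are real. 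What you actually obtain is $\sum_\alpha\frac{1}{\alpha!}\Tilde{\Delta}_q^\alpha\Tilde{D}_x^{(\alpha)}\overline{\sigma_A(x,\xi)}$, which is the consistent version of the formula; the absence of the tilde on $D_x^{(\alpha)}$ in the present paper's display is most plausibly a typo inherited in transcription. You should keep $\Tilde{D}^{(\alpha)}$ (equivalently, write $\overline{D^{(\alpha)}\sigma_A}$) in your final expansion rather than forcing a match to $D_x^{(\alpha)}\overline{\sigma_A}$, or else add the hypothesis that the $D^{(\alpha)}$ have real coefficients.
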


We now show a result that will be used in the next section.
\begin{pro}Assume that the measure of $\Omega$ is finite, and that it is normalised. Then for symbols $\sigma$ in the $\MO$-symbol class $S^0_{1,0} (\Ombar \times \I)$ the series $$\sum_{\eta \in \I } \sup_{\xi \in \I}  |\widehat{\sigma} (\eta , \xi)|,$$ is convergent.
\end{pro}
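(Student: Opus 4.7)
The plan is to exploit that $v_\eta$ is an eigenfunction of $\MO^*$ with eigenvalue $\overline{\lambda_\eta}$, so that large values of $|\lambda_\eta|$ translate into decay of the Fourier coefficients. Because $\sigma \in S^0_{1,0}(\Ombar \times \I)$ ensures $\sigma(\cdot,\xi) \in C^\infty_\MO(\Ombar)$ for every $\xi$, the boundary conditions permit the pairing $(\MO f, v_\eta)_{L^2(\Omega)} = (f, \MO^* v_\eta)_{L^2(\Omega)} = \lambda_\eta (f, v_\eta)_{L^2(\Omega)}$, and iterating this identity gives $\widehat{\MO^k \sigma(\cdot,\xi)}(\eta) = \lambda_\eta^k \widehat{\sigma}(\eta,\xi)$ for every $k \in \N$. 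Since $\norm{v_\eta}_{L^2(\Omega)} = 1$, Cauchy--Schwarz then yields, for any $\eta$ with $\lambda_\eta \neq 0$,
\[
|\widehat{\sigma}(\eta,\xi)| \;\leq\; |\lambda_\eta|^{-k}\, \norm{\MO^k \sigma(\cdot,\xi)}_{L^2(\Omega)}.
\]

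The heart of the argument is a uniform-in-$\xi$ bound on $\norm{\MO^k \sigma(\cdot,\xi)}_{L^2(\Omega)}$. Writing $\MO^k$ as a differential operator of order $mk$ with smooth bounded coefficients, $\MO^k \sigma(\cdot,\xi)$ is a finite sum of terms $a_\beta(x)\, \partial_x^\beta \sigma(x,\xi)$ with $|\beta| \leq mk$ and $a_\beta \in C_b^\infty(\Ombar)$. Using the observation recorded after the Taylor expansion formula, that each $\partial_x^\beta$ can be written as a linear combination with smooth bounded coefficients of the operators $D_x^{(\alpha)}$, $|\alpha| \leq |\beta|$, together with the defining estimate $|D_x^{(\alpha)} \sigma(x,\xi)| \leq C_\alpha$ of the symbol class $S^0_{1,0}(\Ombar \times \I)$, one obtains $|\MO^k \sigma(x,\xi)| \leq M_k$ uniformly in $x \in \Ombar$ and $\xi \in \I$. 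The normalised finite measure of $\Omega$ then upgrades this to $\norm{\MO^k \sigma(\cdot,\xi)}_{L^2(\Omega)} \leq M_k$.

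Finally, Assumption (B) forces $\{\eta \in \I : \langle \eta \rangle \leq R\}$ to be finite for every $R$, so only finitely many $\eta$ have $|\lambda_\eta|$ near zero; for these I use the trivial bound $|\widehat{\sigma}(\eta,\xi)| \leq \norm{\sigma(\cdot,\xi)}_{L^2(\Omega)} \leq M_0$ uniformly in $\xi$. For the remaining $\eta$, the relation $\langle \eta \rangle^{2m} = 1 + |\lambda_\eta|^2$ gives $|\lambda_\eta|^{-k} \leq C \langle \eta \rangle^{-mk}$, so choosing $k$ with $mk > s_0$ where $s_0$ is as in Assumption (B),
\[
\sum_{\eta \in \I} \sup_{\xi \in \I} |\widehat{\sigma}(\eta,\xi)| \;\leq\; C_k \sum_{\eta \in \I} \langle \eta \rangle^{-mk} \;<\; \infty.
\]
The main subtle point is the $\xi$-uniformity of the $L^2$-estimate for $\MO^k \sigma(\cdot,\xi)$, because the symbol-class bounds are phrased in terms of the $D_x^{(\alpha)}$ rather than ordinary partial derivatives; once these are converted through the relation noted after the Taylor expansion formula, the remainder is a routine integration-by-parts argument combined with the summability guaranteed by Assumption (B).
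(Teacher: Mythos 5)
Your proof is correct and follows essentially the same path as the paper's: pass powers of $\MO$ onto $\sigma(\cdot,\xi)$ via the conjugate eigenvalue relation $\MO^* v_\eta = \overline{\lambda}_\eta v_\eta$, bound $\norm{\MO^k\sigma(\cdot,\xi)}_{L^2(\Omega)}$ uniformly in $\xi$ from the $S^0_{1,0}$ estimates and the normalised measure, and then sum using Assumption (B). You are a bit more explicit than the paper in two places — converting $\partial_x^\beta$ to the $D_x^{(\alpha)}$ operators and separating out the finitely many small $|\lambda_\eta|$ — but the argument is the same (the paper simply fixes $k=[s_0]+1$ rather than choosing $k$ with $mk>s_0$).
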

\begin{proof}
Let $s_0$ be as in Assumption (B). Note that $$|\lambda_\eta^{[s_0] + 1} \widehat{\sigma} (\eta , \xi) | = \Big| \int_\Omega \sigma(x, \xi) \overline{(\lambda_\eta^{[s_0] + 1}  v_\eta (x))} dx \Big| = \Big| \int_\Omega \MO^{[s_0] + 1} \sigma (x , \xi) \overline{v_\eta (x)} dx \Big|,$$ and since $\sigma $ is in the Hörmander class $S^0_{1,0} (\Ombar \times \I)$ then $\sigma (\cdot , \xi) \in C^\infty_\MO (\Ombar)$ for each $\xi \in \I$. Hence we obtain $$|\lambda_\eta^{[s_0] + 1} \widehat{\sigma} (\eta , \xi) |  = \Big| \int_\Omega \MO^{[s_0] + 1} \sigma (x , \xi) \overline{v_\eta (x)} dx \Big| \leq \norm{\MO^{[s_0] + 1} \sigma (\cdot , \xi)}_{L^2 (\Omega)} \leq \norm{\MO^{[s_0] + 1} \sigma (\cdot , \xi)}_{L^\infty (\Omega)}.$$  Recall that, by Assumption (A), the operator $\MO$ is a differential operator with smooth bounded coefficients in $\Omega$. Then, $\MO^{[s_0] + 1}$ is a differential operator with smooth bounded coefficients in $\Omega$, what allows us to deduce that $$\sup_{\xi \in \I} || \MO^{[s_0] + 1}  \sigma (\cdot , \xi) ||_{C(\Omega)} < \infty ,$$ since $\sigma $ is in the Hörmander class $S^0_{1,0} (\Ombar \times \I)$ so, all its derivatives are uniformly bounded in $x$ and  $\xi$. This concludes the proof.
\end{proof}

\begin{rem}
The above arguments and Assumption (A) also prove that:
\begin{align*}
  \sup_{\xi \in \I} ||\widehat{\sigma} (\cdot , \xi)||_{\ell^1 (\MO)} &= \sup_{\xi \in \I} \sum_{\eta \in \I} |\widehat{\sigma} (\eta , \xi)| \cdot ||u_\eta||_{L^\infty (\Omega) } \\ &\leq C_b \sup_{\xi \in \I} \sum_{\eta \in \I} |\widehat{\sigma} (\eta , \xi)| \langle \eta \rangle^{\mu_0} \\ &\leq C_b \Big( \sum_{\eta \in \I} \langle \eta \rangle^{-2s_0} \Big)^{1/2} \sup_{\xi \in \I} ||\sigma (\cdot , \xi )||_{\mathcal{H}^{\mu_0 + s_0}_\MO (\Omega)} \\ & \leq C \sup_{\xi \in \I} ||\MO^{\frac{\mu_0 + s_0}{m}} \sigma (\cdot , \xi )||_{L^2 (\Omega)} \\ &\leq C \sup_{\xi \in \I} ||\MO^{\frac{\mu_0 + s_0}{m}} \sigma (\cdot , \xi )||_{L^\infty(\Omega)} < \infty,
\end{align*}
the last quantity being finite in view of $\MO^{k}$ being a differential operator with smooth coefficients for any $k$, and by interpolation.
\end{rem}

In view of the correspondence between quantizable linear operators and symbols, from now on we will change our perspective and think of quantizable operators as linear operators associated to given symbols.

\begin{defi}\normalfont[Pseudo-differential operators] Let $\sigma : \Ombar \times \I \to \C$ be a measurable function such that $$\sigma (\cdot , \xi) \in L^2 (\Omega), \esp \esp \text{for all} \esp \xi \in \I.$$ Then one defines its associated $\MO$-pseudo-differential operator as the linear operator acting (initially) on $Span\{u_\xi\}$ by the formula $$T_\sigma f (x) = \sum_{\xi \in \I} \sigma (x, \xi) \widehat{f}(\xi) u_\xi (x).$$The function $\sigma (x,\xi)$ is called the symbol of the operator. Analogously, given a measurable function $\tau (x , \xi)$ such that $$\tau (\cdot , \xi) \in L^2 (\Omega), \esp \esp \text{for all} \esp \xi \in \I,$$ one defines its associated $\MO^*$-pseudo-differential operator as the linear operator acting (initially) on $Span\{v_\xi\}$ by the formula $$T_{\tau} f (x) = \sum_{\xi \in \I} \tau (x, \xi) \widehat{f}_*(\xi) v_\xi (x).$$
\end{defi}

\section{{Compact operators}}
 
In this section we provide a necessary and sufficient condition for compactness of pseudo-differential operators with $\MO$-symbols in the Hörmander class $S^m_{1,0} (\Ombar \times \I)$. For this purpose we enunciate the version of Gohberg's Lemma corresponding to the present framework. A proof of this theorem will be discussed in Section 6. In what follows, for $E$ and $F$ normed spaces, $\mathfrak{K} (E,F)$ denotes the collection of compact operators in $\mathcal{L} (E,F)$.
\begin{teo}[Gohberg's Lemma] Assume that $\Omega$ has finite measure $1$. Let $T_\sigma$ be a pseudo-differential operator with $\MO$-symbol $\sigma \in S^0_{1,0} (\Ombar \times \I)$. Then $\norm{T_\sigma - K}_{\mathcal{L} (L^2 (\Omega))} \geq d_\sigma $  for all compact operator $K \in \mathfrak{K}(L^2(\Omega))$, where    
\begin{align*}
     d_\sigma:= \limsup_{|\xi| \to \infty} \{\sup_{x \in \Omega} |\sigma (x , \xi)|\}.
\end{align*}
\end{teo}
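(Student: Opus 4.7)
My plan is to adapt the classical Gohberg strategy: build a bounded sequence $\{f_n\}\subseteq L^2(\Omega)$ with $\|f_n\|_{L^2}=1$, converging weakly to $0$, and such that $\|T_\sigma f_n\|_{L^2}\to d_\sigma$. Once such a sequence is available, the conclusion is soft: for any compact $K\in\mathfrak{K}(L^2(\Omega))$ one has $\|Kf_n\|_{L^2}\to 0$, so
\[
\|T_\sigma-K\|_{\mathcal{L}(L^2(\Omega))}\ \geq\ \|(T_\sigma-K)f_n\|_{L^2}\ \geq\ \|T_\sigma f_n\|_{L^2}-\|Kf_n\|_{L^2}\ \longrightarrow\ d_\sigma.
\]

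The first step is the extraction of indices. By definition of $d_\sigma$, for every $\varepsilon>0$ one can pick $\xi_n\in\I$ with $|\xi_n|\to\infty$ and $x_n\in\Omega$ such that $|\sigma(x_n,\xi_n)|>d_\sigma-\varepsilon$. Using compactness of $\Ombar$ we may pass to a subsequence with $x_n\to x_0\in\Ombar$. Next I would choose concentrators: a fixed $\phi\in C^\infty_\MO(\Ombar)$ with $\|\phi\|_{L^2}=1$ localized in a coordinate patch around $x_0$ inside $\Omega$, and dilations $\phi_n$ with increasingly small support around $x_n$, and then set
\[
f_n\ =\ \frac{\phi_n\,u_{\xi_n}}{\|\phi_n\,u_{\xi_n}\|_{L^2}}.
\]
The modulation by the eigenfunction $u_{\xi_n}$ replaces the plane wave $e^{ix\cdot\xi_n}$ of the toroidal argument: it produces an $\MO$-Fourier content of $f_n$ that is asymptotically concentrated at $\xi_n$, while $\phi_n$ enforces localization of the $x$-variable near $x_n$.

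The decisive step is then the estimate $\|T_\sigma f_n\|_{L^2}\geq d_\sigma-C\varepsilon$ for large $n$. Writing $T_\sigma f_n(x)=\sum_\eta\sigma(x,\eta)\widehat{f_n}(\eta)u_\eta(x)$, one wants to replace $\sigma(x,\eta)$ by the constant $\sigma(x_n,\xi_n)$ on the region that matters. The $x$-dependence is tamed by the smoothness of $\sigma(\cdot,\eta)$ (from $S^0_{1,0}$) combined with the shrinking support of $\phi_n$, while the $\eta$-variation is absorbed using the decay of $\Delta^\alpha_q\sigma$ together with the Taylor expansion of Proposition 3.3 and a summation-by-parts type argument against $\widehat{f_n}$; here Assumption~(A), the bound $\|u_\xi\|_{L^\infty}\lesssim\langle\xi\rangle^{\mu_0}$, Plancherel (Proposition 3.10) and Assumption~(B) (to sum decay factors $\langle\xi\rangle^{-s_0}$ as in Remark 3.33) are what control the error. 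In parallel, weak convergence $f_n\rightharpoonup 0$ follows since $f_n$ concentrates on sets of vanishing measure: for any $g\in L^2(\Omega)$, $|(f_n,g)_{L^2}|\leq\|g\|_{L^2(\mathrm{supp}\,\phi_n)}\to 0$.

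I expect the main obstacle to be this replacement of $\sigma(x,\eta)$ by $\sigma(x_n,\xi_n)$: in the non-harmonic setting the eigenfunctions are not plane waves, the products $\phi_n u_{\xi_n}$ do not have a clean $\MO$-Fourier support, and the difference operators $\Delta_q^\alpha$ substitute for derivatives in $\xi$ only through a recurrence involving the auxiliary functions $q_j$. The analysis therefore has to be carried out by freezing the symbol at $(x_n,\xi_n)$, writing $T_\sigma f_n=\sigma(x_n,\xi_n)f_n+R_n$, and controlling $\|R_n\|_{L^2}\to 0$ via the symbolic calculus (composition formula of Lemma 3.30, adjoint formula of Lemma 3.31, and the integrability estimates of Proposition 3.32 and Remark 3.33). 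All other steps—the weak convergence of $f_n$, the passage to the limit against $K$, and letting $\varepsilon\to 0$—are routine consequences once this key estimate is in place.
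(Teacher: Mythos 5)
Your strategy is the classical Gohberg one and coincides with the paper's at the level of the overall scheme: pick $(x_k,\xi_k)$ realising $d_\sigma$, form $f_k=(\text{bump})\cdot u_{\xi_k}$, show $f_k\rightharpoonup 0$ so that any compact $K$ is annihilated, and reduce to the estimate $\|T_\sigma f_k\|\approx |\sigma(x_k,\xi_k)|\,\|f_k\|$ with the final $\varepsilon\to 0$. However, you deviate from the paper in two places, and one of them introduces a genuine difficulty you have not resolved.

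First, you propose concentrators $\phi_n$ with \emph{shrinking} support, and you derive $f_n\rightharpoonup 0$ from the shrinkage of the support. The paper does not shrink anything: for each fixed $\varepsilon$ it takes a \emph{fixed} bump $f$ supported in a neighbourhood $V_\varepsilon$ of the accumulation point $x_0$ and sets $f_k=f\cdot u_{\xi_k}$. Weak convergence is then obtained in a completely different way: $(f_k,g)_{L^2}$ is (the conjugate of) the $\MO$-Fourier coefficient $\widehat{h}(\xi_k)$ of $h=\overline{fg}\in L^2(\Omega)$, and hence tends to $0$ by the Plancherel/Bessel mechanism — a Riemann--Lebesgue argument exploiting oscillation of $u_{\xi_k}$, not concentration in $x$. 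Your shrinking-support device is not merely a stylistic change. With $\operatorname{supp}\phi_n$ of diameter $\delta_n\to 0$, the $\MO$-Fourier content of $\phi_n u_{\xi_n}$ spreads over a frequency range of order $\delta_n^{-1}$ around $\xi_n$; to freeze $\sigma$ at $\xi_n$ one then needs the $S^0_{1,0}$-decay $|\Delta^\alpha_q\sigma|\lesssim\langle\xi\rangle^{-|\alpha|}$ to beat $\delta_n^{-1}$, i.e.\ $\delta_n\langle\xi_n\rangle\to\infty$. This rate constraint is essential and is nowhere imposed in your sketch; without it the "decisive step" $\|R_n\|_{L^2}\to 0$ can fail. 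The paper's fixed-bump / fixed-$\varepsilon$ approach sidesteps this entirely: the $x$-localisation is loose (independent of $k$), and the frequency side is controlled by Lemma~6.1, not by making $\operatorname{supp}\phi_n$ small.

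Second, your pointer to the tools for the decisive step is off target. The paper does not use the composition formula (Lemma~3.30) or the adjoint formula (Lemma~3.31). Its Lemma~6.1 is a direct kernel computation: write $T_\sigma f_k(x)=\int_\Omega K(x,y)f(y)u_{\xi_k}(y)\,dy$, Taylor-expand the \emph{bump function} $f$ (not the symbol) around $x$ against the admissible family $q^\alpha(x,y)$, and identify each resulting term as $\Delta_q^\alpha\sigma(x,\xi_k)\,u_{\xi_k}(x)$ times a bounded function; the $S^0_{1,0}$-estimates then give $\|T_\sigma f_k-\sigma(\cdot,\xi_k)f_k\|_{L^2}\to 0$. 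Splitting the error into this kernel estimate plus the elementary $x$-freezing estimate (G1), rather than a single symbol-freezing at $(x_n,\xi_n)$, is what keeps the argument within the symbol classes you actually have. Your proposal correctly identifies the obstacle, but the route you suggest for overcoming it (symbolic calculus via composition/adjoint plus a summation-by-parts against $\widehat{f_n}$) is not the one that works here and you have not supplied the missing details; as written, the decisive estimate remains a gap.
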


The original statement of this theorem can be found in \cite{gohberg}. A toroidal  version of this theorem can be found in \cite{Molahajloo2010}. For the version of Gohberg's Lemma on general compact Lie groups see \cite{Dasgupta2016}. The proof of Theorem 4.1 will be given in Section 6.

\begin{teo}
Assume that $\Omega$ has finite measure $1$. Let $T_\sigma$ be a pseudo-differential operator with $\MO$-symbol $\sigma \in S^0_{1,0} (\Ombar \times \I)$. Then $T_\sigma$ extends to a compact  operator in $L^2 (\Omega)$ if and only if

\begin{align*}
     d_\sigma:= \limsup_{|\xi| \to \infty} \{ \sup_{x \in \Omega} |\sigma (x , \xi)|\} = 0.
\end{align*}
\end{teo}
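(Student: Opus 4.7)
The proof splits into two implications, both anchored in Gohberg's Lemma (Theorem 4.1).

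For the necessity direction, assume $T_\sigma$ extends to a compact operator on $L^2(\Omega)$ and apply Gohberg's Lemma with the particular choice $K := T_\sigma$. This yields $0 = \norm{T_\sigma - T_\sigma}_{\mathcal{L}(L^2(\Omega))} \geq d_\sigma \geq 0$, forcing $d_\sigma = 0$. This implication is essentially free from the lemma.

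For the sufficiency direction, assume $d_\sigma = 0$ and approximate $T_\sigma$ in operator norm by finite-rank operators, using the closedness of $\mathfrak{K}(L^2(\Omega))$ inside $\mathcal{L}(L^2(\Omega))$. For each $N \in \N$, set
$$\sigma_N(x,\xi) := \sigma(x,\xi)\cdot \mathbf{1}_{\{|\xi|\leq N\}}(\xi),\qquad K_N:= T_{\sigma_N}.$$
Since $\I \subseteq \Z^l$, the set $\I_N := \{\xi\in\I : |\xi|\leq N\}$ is finite, and
$$K_N f(x) = \sum_{\xi \in \I_N} (f,v_\xi)_{L^2(\Omega)}\cdot \sigma(x,\xi)\, u_\xi(x)$$
is a finite sum of rank-one operators $f\mapsto (f,v_\xi) \cdot (\sigma(\cdot,\xi)u_\xi)$, well-defined since $\sigma(\cdot,\xi)$ is bounded by the $S^0_{1,0}$ estimate and $u_\xi\in L^2(\Omega)$, hence compact. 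Thus it suffices to prove $\norm{T_\sigma - K_N}_{\mathcal{L}(L^2(\Omega))}\to 0$ as $N \to \infty$.

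The difference $T_\sigma - K_N = T_{\tau_N}$ has symbol $\tau_N(x,\xi)=\sigma(x,\xi)\mathbf{1}_{\{|\xi|>N\}}$, whose sup-norm over $\Ombar \times \I$ equals $\sup_{|\xi|>N,\, x\in \Omega}|\sigma(x,\xi)|\to d_\sigma=0$. To convert this symbol-level smallness into operator-norm smallness, I would replace the hard cutoff by a smooth frequency cutoff $\chi_N(\xi) = \chi(|\xi|/N)$, so that $(1-\chi_N)\sigma \in S^0_{1,0}(\Ombar\times\I)$ has difference/derivative seminorms bounded uniformly in $N$ while its sup-norm still vanishes. An $L^2$-boundedness result of Calderón--Vaillancourt type for $S^0_{1,0}$ symbols, combined with an interpolation argument exploiting both the vanishing sup-norm and the uniform control of the higher $\Delta_q^\alpha D_x^{(\beta)}$-seminorms, then yields $\norm{T_{(1-\chi_N)\sigma}}_{\mathcal{L}(L^2)}\to 0$; the correction $T_{\chi_N\sigma}-K_N$ has symbol supported on finitely many $\xi$ and thus is finite-rank, contributing nothing to the limit.

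The principal obstacle is precisely this last step: a naive Calderón--Vaillancourt bound controls $\norm{T_\sigma}$ by a fixed finite collection of seminorms of $\sigma$, which need not decay as $N\to\infty$ merely from the sup-norm smallness of the symbol at high frequencies. The interpolation argument, which in the non-harmonic setting must accommodate the fact that the differences $\Delta_q^\alpha$ depend intricately on the biorthogonal system $\{u_\xi,v_\xi\}_{\xi\in\I}$, is the genuine technical content of the proof.
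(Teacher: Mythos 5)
Your necessity direction is exactly the paper's: set $K = T_\sigma$ in Gohberg's Lemma. No issues there.

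Your sufficiency direction, however, is a genuinely different route from the paper's, and it has an acknowledged gap that the paper's route is designed precisely to avoid. You truncate the symbol in the frequency variable $\xi$, obtaining finite-rank approximants $K_N = T_{\sigma \mathbf{1}_{|\xi|\le N}}$, and then you need $\norm{T_{\sigma\mathbf{1}_{|\xi|>N}}}_{\mathcal L(L^2)} \to 0$. As you yourself observe, a Calder\'on--Vaillancourt-type bound for $S^0_{1,0}$ controls the operator norm by a fixed finite collection of seminorms of the symbol, none of which decays just because the sup-norm of the high-frequency tail vanishes; the smooth-cutoff and ``interpolation'' step you invoke to bridge this is not actually carried out, and in this nonharmonic setting the difference operators $\Delta_q^\alpha$ entangle the $x$ and $\xi$ variables in a way that makes such an interpolation far from routine. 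So what you present is a proof sketch whose central lemma is missing.

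The paper instead decomposes the \emph{other} variable: it expands $\sigma(\cdot,\xi)$ in its $\MO$-Fourier series in $x$, writing
\begin{align*}
T_\sigma = \sum_{\eta\in\I} A_\eta\, T_{\widehat\sigma_\eta},
\end{align*}
where $A_\eta$ is multiplication by $u_\eta$ and $T_{\widehat\sigma_\eta}$ is the $\MO$-Fourier multiplier with symbol $\widehat\sigma(\eta,\cdot)$. The hypothesis $d_\sigma=0$ forces each $\widehat\sigma(\eta,\xi)\to 0$ as $|\xi|\to\infty$ (since $|\widehat\sigma(\eta,\xi)| \le \sup_x|\sigma(x,\xi)|$), so each $T_{\widehat\sigma_\eta}$ is compact by the multiplier compactness criterion (Lemma 3.24), and each $A_\eta T_{\widehat\sigma_\eta}$ is compact by the ideal property. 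The series converges in $\mathcal L(L^2)$ because, by Proposition 3.32/Remark 3.33, the $S^0_{1,0}$ estimates and Assumptions (A), (B) give $\sum_\eta \langle\eta\rangle^{\mu_0}\sup_\xi|\widehat\sigma(\eta,\xi)| < \infty$. This converts the problem into operator-norm summability of an explicit series — a question already settled by the earlier symbol estimates — rather than the much more delicate assertion that a high-frequency tail operator has vanishing norm. In short: the paper's $x$-decomposition buys absolute convergence for free from the $S^0_{1,0}$ smoothness, while your $\xi$-truncation defers all the difficulty to a norm-decay lemma you do not prove. To repair your argument along its own lines you would essentially need to redo the paper's estimate anyway, since the only available control on $\norm{T_{\sigma\mathbf{1}_{|\xi|>N}}}$ in this framework comes from that same Fourier-series expansion in $x$ together with dominated convergence.
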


\begin{proof}
Assume that $d_\sigma =0$ and let $f \in C_\MO^\infty (\Ombar)$. For all $x \in \Ombar$ we have

\begin{align*}
    (T_\sigma f) (x) &= \sum_{\xi \in \I} \sigma(x , \xi) \widehat{f}(\xi) u_\xi (x) \\ &= \sum_{\xi \in \I} \Big( \sum_{\eta \in \I} \widehat{\sigma} (\eta , \xi) u_\eta(x)\Big) \widehat{f} (\xi) u_\xi (x) \\
    &= \sum_{\eta \in \I} u_\eta (x) \Big( \sum_{\xi \in \I} \widehat{\sigma} (\eta , \xi) \widehat{f} (\xi) u_\xi (x) \Big)  \\
    &= \sum_{\eta \in \I}  u_\eta (x) (T_{\widehat{\sigma}_\eta} f) (x).
\end{align*}
Here $\widehat{\sigma}_\eta (\xi) := \widehat{\sigma}(\eta, \xi)$ and the change in the order of summation is justified by Fubini–Tonelli's theorem since 

\begin{align*}
    \sum_{\xi \in \I} \sum_{\eta \in \I} |\widehat{\sigma} (\eta, \xi)| |\widehat{f}(\xi)|& \norm{u_\xi}_{L^\infty (\Ombar)} \norm{u_\eta}_{L^\infty (\Ombar)}\\ &= \sum_{\xi \in \I} \norm{\widehat{\sigma}(\cdot , \xi)}_{\ell^1 (\MO)}  \cdot |\widehat{f}(\xi) | \norm{u_\xi}_{L^\infty (\Ombar)}\\
    & \leq \sup_{\xi \in \I} \norm{\widehat{\sigma}(\cdot , \xi)}_{\ell^1 (\MO)}\cdot \sum_{\xi \in \I}  |\widehat{f}(\xi) | \norm{u_\xi}_{L^\infty (\Ombar)} < \infty .
\end{align*}

\noindent By defining the operator $(A_\eta f) (x) := u_\eta (x) f (x)$, a multiplication operator, we have

$$(T_\sigma f) (x) = \sum_{\eta \in \I}  (A_\eta  T_{\widehat{\sigma}_\eta} f) (x),$$

\noindent and clearly $A_ \eta \in \mathcal{L} (L^2(\Omega))$ since $$|| A_\eta f ||_{L^2 (\Omega)} \leq ||u_\eta||_{L^\infty (\Omega)} ||f||_{L^2 (\Omega)} \leq C_b \langle \eta \rangle^{\mu_0} ||f||_{L^2 (\Omega)} \esp \esp \text{for each} \esp \esp \eta \in \I.$$ Now, for each $\eta \in \I$, the operator $T_{\widehat{\sigma}_\eta}$ is a Fourier multiplier. Moreover, since a  pseudo-differential operator with symbol $\sigma(\xi)$ depending just on the Fourier variable extend to a compact operator in $L^2 (\Omega)$ if and only if 

    $$\lim_{|\xi| \to \infty} |\sigma (\xi)|=0,$$

\noindent and for each $\eta \in \I$ we have that

\begin{align*}
    \lim_{|\xi| \to \infty} |\widehat{\sigma} (\eta,\xi)| &= \lim_{|\xi| \to \infty} \big| \int_{\Omega} \sigma (x , \xi) \overline{u_\eta (x)} dx \big| \\& \leq \lim_{|\xi| \to \infty} ||\sigma (\cdot , \xi)||_{L^2 (\Omega)} \\ &\leq \lim_{|\xi| \to \infty} \{ \sup_{x \in \Omega} |\sigma (x , \xi)| \} \\ & \leq \limsup_{|\xi| \to \infty} \{ \sup_{x \in \Omega} |\sigma (x,\xi)| \}= 0,
\end{align*}

\noindent then each operator $T_{\widehat{\sigma}_\eta}$ is a compact operator. As a consequence each  $A_\eta  T_{\widehat{\sigma}_\eta}$ is compact and for all $N \in \N$ , the operator
$$ \sum_{|\eta| \leq N} A_\eta  T_{\widehat{\sigma}_\eta},$$
is also compact since the set of  compact operators $\mathfrak{K} (L^2 (\Omega))$ form a two sided ideal in $\mathcal{L} (L^2 (\Omega))$ (see \cite{vitali}, Proposition 4.3.4) and this ideal of compact operators is a closed subset of $\mathcal{L} (L^2(\Omega))$ in the operator norm topology. For this reason, if the series

\begin{align*}
     \sum_{\eta \in \I}  A_\eta T_{\widehat{\sigma}_\eta},
\end{align*}

\noindent converges in the operator norm topology, then 

\begin{align*}
    T_\sigma = \lim_{|N| \to \infty} \sum_{|\eta| \leq N} A_\eta T_{\widehat{\sigma}_\eta},
\end{align*}

\noindent is compact as  it is the limit of a sequence of compact operators. 
We have already seen in Remark 3.33 that if $\sigma \in S^0_{1,0} (\Ombar \times \I)$ then 

$$\sum_{\eta \in \I} \norm{A_\eta  T_{\widehat{\sigma}_\eta}}_{\mathcal{L}(L^2(\Omega))} \leq  \sum_{\eta \in \I} C_b \langle \eta \rangle^{\mu_0} \norm{T_{\widehat{\sigma}_\eta}}_{\mathcal{L}(L^2(\Omega))} \leq C_b \frac{K_1}{k_1} \sum_{\eta \in \I} \langle \eta \rangle^{\mu_0} \sup_{\xi \in \I} |\widehat{\sigma} (\eta , \xi)|,$$
where $k_1 , K_1$ are as in Lemma 3.8. The above sum converges since $$\sum_{\eta \in \I} \langle \eta \rangle^{\mu_0} \sup_{\xi \in \I} |\widehat{\sigma} (\eta , \xi)| = \sum_{\eta \in \I} \langle \eta \rangle^{-s_0} \sup_{\xi \in \I} \langle \eta \rangle^{\mu_0 + s_0}  |\widehat{\sigma} (\eta , \xi)| \leq \sum_{\eta \in \I} \langle \eta \rangle^{-s_0} \sup_{\xi \in \I} ||\sigma (\cdot , \xi)||_{\mathcal{H}_{\MO}^{s_0 + \mu_0}} < \infty.$$In summary, $T_\sigma$ is a compact operator. Now, assume that $d_\sigma \neq 0$. We need only to show that $T_\sigma$ is not compact on $L^2 (\Omega)$. Suppose that $T_\sigma$ is compact. If we set $T_\sigma = K$ in Theorem 4.1 then it contradicts our assumption that $d_\sigma \neq 0$.
\end{proof}
 Analogously, with the same scheme of proof one can prove the following theorem:

\begin{teo}
Let $T_\tau$ be a pseudo-differential operator with $\MO^*$-symbol $\tau \in \Tilde{S}^0_{1,0} (\Ombar \times \I)$. Then $T_\tau$ extend to a compact  operator in $L^2 (\Omega)$ if and only if

\begin{align*}
     d_\tau:= \limsup_{|\xi| \to \infty} \{ \sup_{x \in \Omega} |\tau (x , \xi)|\} = 0.
\end{align*}
\end{teo}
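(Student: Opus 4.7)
The plan is to mirror the proof of Theorem 4.2, replacing $\MO$-objects by their $\MO^*$-counterparts throughout. Since $\tau(\cdot,\xi) \in C^\infty_{\MO^*}(\Ombar)$ for each $\xi \in \I$, the $\MO^*$-Fourier inversion formula from Proposition 3.7 gives
$$\tau(x,\xi) = \sum_{\eta \in \I} \widehat{\tau}_{*}(\eta,\xi)\, v_\eta(x), \qquad \widehat{\tau}_{*}(\eta,\xi) = \int_\Omega \tau(x,\xi)\,\overline{u_\eta(x)}\,dx.$$
Substituting this into the definition of $T_\tau$ and interchanging the order of summation (justified by Fubini--Tonelli exactly as in the proof of Theorem 4.2) yields
$$(T_\tau f)(x) = \sum_{\eta \in \I} v_\eta(x)\bigl(T_{\widehat{\tau}_{*,\eta}} f\bigr)(x) = \sum_{\eta \in \I}\bigl(B_\eta\, T_{\widehat{\tau}_{*,\eta}} f\bigr)(x),$$
where $B_\eta$ denotes pointwise multiplication by $v_\eta$ and $T_{\widehat{\tau}_{*,\eta}}$ is the $\MO^*$-Fourier multiplier with symbol $\xi \mapsto \widehat{\tau}_{*}(\eta,\xi)$.

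For the sufficiency direction ($d_\tau = 0 \Rightarrow$ compactness), I would argue that each summand $B_\eta T_{\widehat{\tau}_{*,\eta}}$ is compact and that the series converges in the operator-norm topology. The compactness of $T_{\widehat{\tau}_{*,\eta}}$ follows from the $\MO^*$-analogue of Lemma 3.22, once one notes
$$|\widehat{\tau}_{*}(\eta,\xi)| \leq \norm{\tau(\cdot,\xi)}_{L^2(\Omega)} \leq \sup_{x \in \Omega}|\tau(x,\xi)|,$$
so that $d_\tau = 0$ forces $|\widehat{\tau}_{*}(\eta,\xi)| \to 0$ as $|\xi| \to \infty$ for each fixed $\eta$. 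Each $B_\eta$ is a bounded multiplication operator with $\norm{B_\eta}_{\mathcal{L}(L^2(\Omega))} \leq \norm{v_\eta}_{L^\infty(\Omega)}$, and the absolute convergence of $\sum_\eta \norm{B_\eta T_{\widehat{\tau}_{*,\eta}}}_{\mathcal{L}(L^2(\Omega))}$ follows from the Hörmander-class decay $\tau \in \tilde S^0_{1,0}(\Ombar \times \I)$ via the $\MO^*$-analogues of Proposition 3.32 and Remark 3.33, i.e.\ by using the smoothness of $\tau(\cdot,\xi)$ relative to $\MO^*$ together with Assumption (B) to convert Sobolev decay into summability of Fourier coefficients. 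Since $\mathfrak{K}(L^2(\Omega))$ is a closed two-sided ideal of $\mathcal{L}(L^2(\Omega))$, the limit $T_\tau$ is compact.

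For the necessity direction, the argument is formally identical to that of Theorem 4.2: assuming $T_\tau$ is compact, one invokes the $\MO^*$-version of Gohberg's Lemma (Theorem 4.1 with $\MO$, $u_\xi$, $\widehat{\sigma}$ replaced by $\MO^*$, $v_\xi$, $\widehat{\tau}_{*}$; the proof given in Section 6 transfers verbatim after swapping the roles of the two biorthogonal systems), which yields $\norm{T_\tau - K}_{\mathcal{L}(L^2(\Omega))} \geq d_\tau$ for every $K \in \mathfrak{K}(L^2(\Omega))$. Setting $K = T_\tau$ forces $d_\tau = 0$.

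The main obstacle is ensuring that the estimates used in the sufficiency step survive the passage to the conjugate problem. In particular, Assumption (A) only records the polynomial bound $\sup_x|u_\xi(x)| \leq C_b \langle \xi\rangle^{\mu_0}$, whereas the operator-norm summability of $\sum_\eta B_\eta T_{\widehat{\tau}_{*,\eta}}$ requires control on $\norm{v_\eta}_{L^\infty(\Omega)}$ and on sums of the form $\sum_\eta \langle \eta\rangle^{\mu_0'} \sup_\xi |\widehat{\tau}_{*}(\eta,\xi)|$. Under the implicit symmetry of the biorthogonal framework used throughout Section 3 (or after imposing the analogous growth estimate on $v_\eta$), this is delivered by the $\MO^*$-analogue of Remark 3.33, and the remainder of the proof goes through unchanged.
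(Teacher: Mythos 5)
Your proposal is essentially the approach the paper intends: its own "proof" of this theorem is the single sentence that one proceeds ``analogously, with the same scheme of proof,'' and your $\MO^*\leftrightarrow\MO$ transcription of the argument for Theorem~4.2 (the decomposition $T_\tau = \sum_\eta B_\eta T_{\widehat{\tau}_{*,\eta}}$, compactness of each Fourier-multiplier summand, operator-norm convergence via Assumption~(B), and the $\MO^*$-version of Gohberg's Lemma for necessity) is exactly that adaptation. The caveat you raise at the end is a genuine one worth keeping: Assumption~(A) records only the polynomial bound $\sup_x|u_\xi(x)|\leq C_b\langle\xi\rangle^{\mu_0}$, so the operator-norm summability of $\sum_\eta \|v_\eta\|_{L^\infty(\Omega)}\sup_\xi|\widehat{\tau}_*(\eta,\xi)|$ needs the analogous $L^\infty$-growth bound on the $v_\eta$, which the paper tacitly assumes when it says ``analogously'' but never states; with that extra hypothesis granted, your proof goes through as written.
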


\section{{Gershgorin theory}}

In this section, under certain conditions, we will provide spectrum localisation of pseudo-differential operators in the context of the non-harmonic analysis of boundary value problems. Most of this section consists in the application of several well known results about infinite matrix theory. For this reason we will begin recalling the theorems about infinite matrices that we will use later.
In what follows for a linear operator $T :Dom(T) \subseteq E \to E$ the resolvent set of $T$ will be denoted by $$Res(T):=\{\lambda\in \mathbb{C}: (T-\lambda I)^{-1}\in \mathcal{L}(E) \},$$ and the spectrum by  $Spec(T):=\mathbb{C}\setminus Res(T)$.

\subsection{Infinite Matrices}
\begin{defi}\normalfont
Given an infinite index set $\I$, an infinite matrix indexed by $\I$ is a function $M: \I \times \I \to \C$ with matrix entries defined by $M_{\xi \eta} := M(\xi,\eta)$. If $M$ is an infinite matrix and $\varphi$ an infinite vector (or a function from $\I$ to $\C$) then the product of the vector $\varphi$ an the matrix $M$ is defined as 

\begin{align*}
    M\varphi (\xi) := \sum_{\eta \in \I} M_{\xi \eta} \varphi (\eta).
\end{align*}

For infinite matrices $P$ and $Q$ their product is defined as the infinite matrix with entries

\begin{align*}
    PQ_{\xi \eta} := \sum_{\gamma \in \I} P_{\xi \gamma} Q_{\gamma \eta},
\end{align*}

and as usual, the adjoint of the infinite matrix $M$ is the infinite matrix $M^*$ with entries

\begin{align*}
    M^*_{\xi \eta} := \overline{(M_{\eta \xi})}.
\end{align*}

\end{defi}

It is easy to see that, with the above definition, for any pair of infinite vectors (functions $\varphi_1, \varphi_2 : \I \to \C$) and complex numbers $\lambda_1 , \lambda_2$ one has

\begin{align*}
    M(\lambda_1 \varphi_1 + \lambda_2 \varphi_2)  = \lambda_1 M \varphi_1  + \lambda_2 M \varphi_2,
\end{align*}
so it is reasonable to think that an infinite matrix $ M $ can define a linear operator on some sequence space. However, not all infinite matrices define linear operators, and some conditions must be imposed on the matrix to be sufficiently well behaved. In this case we are interested in linear operators on $\ell^2 (\MO)$. Fortunately, infinite matrices that define linear operators in $ \ell^2 (\MO) $ are closely related to infinite matrices acting on $\ell^2(\N)$, (and then with matrices acting on $\ell^2 (\I)$) which have already been studied, and many results have been obtained. We state the most relevant for our work below. The following statement can be found in \cite{Crone1971}.

\begin{lema}[Crone]
Let $M$ be an infinite matrix with rows and columns in $\ell^2(\N)$. Define the projection $$P_n(x) := \sum_{k \leq n} ( x , e_k )_{\ell^2 (\N)} e_k, $$ where $e_k (j) = \delta_{kj}$. Then $M$ defines a bounded operator in $\ell^2 (\N)$ if and only if
$$\sup_{n \in \N} \norm {P_n  M^*  M  P_n}_{\mathcal{L}(\ell^2 (\N))} < \infty.$$ When this happens we have $$\sup_{n \in \N} \norm {P_n  M^*  M  P_n}_{\mathcal{L}(\ell^2 (\N))} = \norm{M}_{\mathcal{L}(\ell^2 (\N))}^2.$$
\end{lema}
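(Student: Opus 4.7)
The plan is to prove the two directions separately, with the forward direction being essentially a one-line observation and the reverse direction requiring care about the formal manipulation of infinite matrix products.

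For the forward direction, suppose $M$ represents a bounded operator $T$ on $\ell^2(\mathbb{N})$. Then $T^*T$ is bounded with $\|T^*T\|_{\mathcal{L}(\ell^2(\mathbb{N}))} = \|T\|_{\mathcal{L}(\ell^2(\mathbb{N}))}^2$, and since each $P_n$ is an orthogonal projection of norm at most one,
$$\|P_n M^* M P_n\|_{\mathcal{L}(\ell^2(\mathbb{N}))} \leq \|T^*T\|_{\mathcal{L}(\ell^2(\mathbb{N}))} = \|T\|_{\mathcal{L}(\ell^2(\mathbb{N}))}^2,$$
so the supremum is finite and bounded by $\|M\|_{\mathcal{L}(\ell^2(\mathbb{N}))}^2$. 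Note that the operator $P_n M^* M P_n$ is a priori well-defined on $\ell^2(\mathbb{N})$ even without boundedness of $M$: for $x \in \ell^2(\mathbb{N})$, $P_n x$ is finitely supported, $M P_n x$ is a finite linear combination of columns of $M$ (each in $\ell^2(\mathbb{N})$ by hypothesis), the coordinates of $M^*(MP_n x)$ are given by inner products of columns of $M$ with $MP_n x$ (which converge absolutely by Cauchy-Schwarz since the rows of $M$ are in $\ell^2(\mathbb{N})$), and projecting by $P_n$ produces a finitely supported vector.

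For the reverse direction, set $C := \sup_n \|P_n M^* M P_n\|_{\mathcal{L}(\ell^2(\mathbb{N}))} < \infty$ and let $x \in \ell^2(\mathbb{N})$ be finitely supported, say with support contained in $\{1,\dots,n\}$. As above, $Mx \in \ell^2(\mathbb{N})$ and the coordinates $(M^*Mx)_k$ are defined as absolutely convergent series. A Fubini-type rearrangement, justified by absolute convergence (here it is crucial that $x$ has finite support, so only finitely many $k$-indices contribute), gives
$$\|Mx\|_{\ell^2(\mathbb{N})}^2 = \langle Mx, Mx \rangle_{\ell^2(\mathbb{N})} = \langle M^* M x, x \rangle_{\ell^2(\mathbb{N})}.$$
Using $P_n x = x$ and the self-adjointness of $P_n$, this equals $\langle P_n M^* M P_n x, x \rangle_{\ell^2(\mathbb{N})}$, hence is bounded by $C\|x\|_{\ell^2(\mathbb{N})}^2$. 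Thus $M$ is a bounded operator on the dense subspace of finitely supported sequences with norm at most $\sqrt{C}$, and extends uniquely to a bounded operator on $\ell^2(\mathbb{N})$ satisfying $\|M\|_{\mathcal{L}(\ell^2(\mathbb{N}))}^2 \leq C$. Combining with the forward direction yields the asserted equality.

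The main obstacle is not conceptual but bookkeeping: one must justify the identity $\langle Mx, Mx\rangle = \langle M^*Mx, x\rangle$ before $M$ is known to define a bounded operator, purely via coordinatewise manipulations. The two hypotheses that both rows and columns of $M$ lie in $\ell^2(\mathbb{N})$ are exactly what is needed to make every infinite series encountered absolutely convergent, thereby legitimising the interchange of summations.
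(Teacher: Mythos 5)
Your proof is correct. Note, however, that the paper does not actually prove this lemma: it cites Crone's original paper \cite{Crone1971} for Lemma 5.2 and only proves the closely related Lemma 5.3 (the same statement with $P_n M P_n$ replacing $P_n M^* M P_n$). The paper's argument for Lemma 5.3 is slightly more direct than yours because no adjoint enters: for $v$ finitely supported in $\{1,\dots,m\}$ one has the exact identity $P_m M v = P_m M P_m v$, whence $\norm{Mv} = \sup_m \norm{P_m Mv} \leq \sup_n \norm{P_n M P_n} \cdot \norm{v}$ with no need to unwind $\langle M^*Mx, x\rangle$. Your argument, necessarily passing through the quadratic form $\norm{Mx}^2 = \langle M^*Mx, x\rangle$, is the natural way to handle the $M^*M$ formulation; the Fubini-type interchange you flag is exactly the extra bookkeeping the adjoint introduces, and your justification of it (finite support of $x$ plus $\ell^2$ rows and columns of $M$) is sound. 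One small slip: when you argue that the coordinates of $M^*(MP_nx)$ converge absolutely, the relevant hypothesis is that the \emph{columns} of $M$ (equivalently the rows of $M^*$) lie in $\ell^2(\N)$, not the rows of $M$ as you wrote; since both are assumed, the conclusion stands. You might also want to remark, as you implicitly rely on it, that the bounded extension of $M$ from finitely supported sequences agrees coordinatewise with the matrix-vector product for general $x \in \ell^2(\N)$, which follows from the absolute convergence of each row sum $\sum_k M_{jk} x_k$.
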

\esp

With an analogous reasoning to Crone we can prove: 

\begin{lema}
Let $M$ be an infinite matrix with rows and columns in $\ell^2(\N)$. Then $M$ defines a bounded operator in $\ell^2(\N) $ if and only if $$\sup_{n \in \N} \norm {P_n    M  P_n}_{\mathcal{L}(\ell^2 (\N))} < \infty.$$ When this happens we have  $$||M||_{\mathcal{L}(\ell^2 (\N))} = \sup_{n \in \N} \norm {P_n    M  P_n}_{\mathcal{L}(\ell^2 (\N))}.$$
\end{lema}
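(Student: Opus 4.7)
The approach I would take is to combine the obvious submultiplicative bound in one direction with a density/truncation argument in the other, following the same strategy as Crone's lemma (Lemma 5.2) but working directly with the finite sections of $M$ rather than with $M^{*}M$.

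\textbf{Easy direction.} If $M$ defines a bounded operator on $\ell^2(\N)$, then since each $P_n$ is an orthogonal projection of operator norm one, submultiplicativity gives $\norm{P_n M P_n}_{\mathcal{L}(\ell^2(\N))} \leq \norm{M}_{\mathcal{L}(\ell^2(\N))}$ for every $n$, hence
$$\sup_{n \in \N} \norm{P_n M P_n}_{\mathcal{L}(\ell^2(\N))} \leq \norm{M}_{\mathcal{L}(\ell^2(\N))} < \infty.$$

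\textbf{Converse.} Assume $C := \sup_n \norm{P_n M P_n}_{\mathcal{L}(\ell^2(\N))} < \infty$. I would proceed in three steps. First, since the columns of $M$ lie in $\ell^2(\N)$, the matrix $M$ acts well on the dense subspace $c_{00} \subseteq \ell^2(\N)$ of finitely supported sequences, with $Mx = \sum_{j} x(j)\, M e_j \in \ell^2(\N)$ for every $x \in c_{00}$. Second, if $x \in c_{00}$ is supported in $\{1, \dots, n_0\}$, then for every $n \geq n_0$ one has $P_n x = x$, so $P_n M x = P_n M P_n x$ and the hypothesis yields $\norm{P_n M x}_{\ell^2(\N)} \leq C \norm{x}_{\ell^2(\N)}$. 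Third, since $Mx \in \ell^2(\N)$ by the first step, $P_n (Mx) \to Mx$ in $\ell^2(\N)$ as $n \to \infty$, so letting $n \to \infty$ in the previous estimate gives $\norm{Mx}_{\ell^2(\N)} \leq C \norm{x}_{\ell^2(\N)}$ for every $x \in c_{00}$. By density of $c_{00}$ in $\ell^2(\N)$, $M$ extends uniquely to a bounded linear operator on $\ell^2(\N)$ of norm at most $C$, and combining with the easy direction yields the equality of norms. (The hypothesis that the rows of $M$ lie in $\ell^2(\N)$ is what guarantees, via Cauchy--Schwarz, that this extension still agrees coordinatewise with the matrix action $(Mx)(\xi) = \sum_\eta M_{\xi \eta} x(\eta)$.)

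\textbf{Main obstacle.} The delicate point is existence rather than estimation: before one can bound $\norm{Mx}$ one must know that $Mx$ is a well-defined element of $\ell^2(\N)$. This is precisely where the hypothesis on the columns of $M$ enters, making $M|_{c_{00}}$ a bona fide linear map into $\ell^2(\N)$; once this is secured, the uniform bound on the finite sections together with the density of $c_{00}$ does all the remaining work.
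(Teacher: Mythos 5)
Your proof is correct and takes essentially the same route as the paper: the easy direction by submultiplicativity of the $P_n$'s, and the converse by observing that $P_n M x = P_n M P_n x$ for finitely supported $x$ (once $n$ is large enough), then passing to the limit in $n$ and using density of the finitely supported sequences. Your explicit remarks about why $Mx \in \ell^2(\N)$ for $x \in c_{00}$ (columns in $\ell^2$) and why the bounded extension still agrees with the coordinatewise matrix action (rows in $\ell^2$, Cauchy--Schwarz) are points the paper leaves implicit, but they do not change the argument.
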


\begin{proof}
Suppose that $M$ is bounded. Then for every $v \in \ell^2 (\N)$ and every $n \in \N$ we have $$||P_n M P_n v||_{\ell^2 (\N)} \leq ||P_n||_{\mathcal{L}(\ell^2 (\N))} || M||_{\mathcal{L}(\ell^2 (\N))}||P_n||_{\mathcal{L}(\ell^2 (\N))} ||v||_{\ell^2 (\N)} \leq ||M||_{\mathcal{L}(\ell^2 (\N))} ||v||_{\ell^2 (\N)} $$ thus  $$\sup_{n \in \N} \norm {P_n    M  P_n}_{\mathcal{L}(\ell^2 (\N))} \leq ||M||_{\mathcal{L}(\ell^2 (M))}< \infty.$$ For the converse, let $\mathcal{P}$ be the collection of vectors in $\ell^2 (\N)$ with finitely many nonzero entries. Then, for every $ v \in \mathcal{P}$, there exits a natural number $m$ such that $P_m v = v$. For this $m$ we have $$|| P_m M v||_{\ell^2 (\N)} = || P_m M P_m v||_{\ell^2 (\N)} \leq \sup_{n \in \N} || P_n M P_n||_{\mathcal{L}(\ell^2 (\N))}||v||_{\ell^2 (\N)} ,$$ and from this $$||  M v||_{\ell^2 (\N)} = \sup_{m \in \N } ||P_m M v||_{\ell^2 (\N)} \leq \sup_{n \in \N} || P_n M P_n||_{\mathcal{L}(\ell^2 (\N))}||v||_{\ell^2 (\N)},$$  yielding $$||M||_{\mathcal{L}(\ell^2 (\N))} \leq \sup_{n \in \N} \norm {P_n    M  P_n}_{\mathcal{L}(\ell^2 (\N))}< \infty.$$ The proof is complete.
\end{proof}

\begin{rem}
 We note that what the previous theorem says is: the norm of an infinite matrix, considered as a linear operator acting on $\ell^2 (\N)$, equals the supremum of the operator induced norms of a sequence of finite matrices. In fact $$\norm {P_n    M  P_n}_{\mathcal{L}(\ell^2 (\N))} = ||M_{ n}||_{\mathcal{L}(\ell^2_n (\C))} , $$ where $M_{ n}$ is the finite matrix with entries $(M)_{jk}$ for $j,k \leq n$ and $\ell^p_n (\C)$ denote the normed space $\C^n$ with the $\ell^p$-norm.  
\end{rem}

The following lemmas can be found in \cite{FARID19917, SHIVAKUMAR198735, ALEKSIC2014541}. 

\begin{lema}
Let $M$ be an infinite matrix. Define two new matrices $D$ and $F$ by $$D_{jk} := \delta_{jk} M_{jk}, \esp \esp \text{and } \esp \esp F_{jk}:=(1-\delta_{jk}) M_{jk},$$ where $\delta_{jk}$ is the Kronecker delta. If the following conditions hold

\begin{enumerate}
    \item[(i)] $M_{kk} \neq 0 $ for all $k \in \N$ and $\inf_{k \in \N} |M_{kk}| > 0$,
    \item[(ii)] $I + FD^{-1}$ defines a bounded operator in $\ell^2 (\N)$ with bounded inverse, 
\end{enumerate}

then $M$ is an invertible densely defined linear operator in $\ell^2 (\N)$ with bounded inverse. If in addition $$\lim_{|k| \to \infty} |M_{kk}| =  \infty,$$ then the inverse of $M$ is a compact operator. 
\end{lema}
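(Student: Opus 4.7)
The idea is to exploit the formal factorisation $M = (I + FD^{-1})D$ and reduce both the invertibility and the compactness assertions to properties of the two factors. First I would make sense of $D$ and $D^{-1}$ as linear operators in $\ell^2(\N)$: condition (i) says that $D$ acts as multiplication by the sequence $(M_{kk})_k$, which is bounded away from $0$, so $D^{-1}$ is a bounded diagonal operator with $\norm{D^{-1}}_{\mathcal{L}(\ell^2 (\N))} = 1/\inf_{k}|M_{kk}|$, whereas $D$ is defined on the natural maximal domain $\mathrm{Dom}(D) = \{v \in \ell^2(\N):\, (M_{kk} v_k)_k \in \ell^2(\N)\}$, which contains the finitely supported vectors and is therefore dense.

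Next I would verify that the natural domain of $M$ coincides with $\mathrm{Dom}(D)$. Writing $M = D+F$, condition (ii) guarantees in particular that $FD^{-1}$ is bounded on $\ell^2(\N)$; hence for any $v \in \mathrm{Dom}(D)$ one has $Fv = (FD^{-1})(Dv) \in \ell^2(\N)$, and so $Mv \in \ell^2(\N)$. Thus $M$ is densely defined. The candidate for the inverse is
\[
  N := D^{-1}(I+FD^{-1})^{-1},
\]
which is bounded as a composition of bounded operators by (ii). A direct computation, using the factorisation $M = (I+FD^{-1})D$ on $\mathrm{Dom}(D)$, gives $NM = D^{-1}(I+FD^{-1})^{-1}(I+FD^{-1})D = I$ on $\mathrm{Dom}(M)$, and $MN = (I+FD^{-1})D D^{-1}(I+FD^{-1})^{-1} = I$ on $\ell^2(\N)$. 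This establishes the first conclusion.

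For the compactness statement, the additional hypothesis $\lim_{|k|\to\infty}|M_{kk}| = \infty$ forces the diagonal entries of $D^{-1}$ to converge to $0$, so $D^{-1}$ is approximated in operator norm by its truncations $P_n D^{-1} P_n$, each of which is finite-rank. Hence $D^{-1}$ is a compact operator on $\ell^2(\N)$. Since the compact operators form a two-sided ideal in $\mathcal{L}(\ell^2(\N))$, the identity $M^{-1} = D^{-1}(I+FD^{-1})^{-1}$ shows that $M^{-1}$ is compact as the product of a compact and a bounded operator.

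The only step that requires genuine care, and which I view as the main obstacle, is the algebraic manipulation on the unbounded operator $M$: one must check that $(I+FD^{-1})D$ really equals $M$ as an operator on $\mathrm{Dom}(D)$ (rather than only in a formal sense) and that $\mathrm{Dom}(M) = \mathrm{Dom}(D)$, so that the composition $NM = I$ holds on the correct domain. Once this domain bookkeeping is carried out, the rest follows from elementary properties of bounded and compact operators.
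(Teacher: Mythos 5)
Your proof is correct, and the factorisation $M = (I + FD^{-1})D$ with candidate inverse $N = D^{-1}(I + FD^{-1})^{-1}$, together with the observation that the diagonal operator $D^{-1}$ becomes compact when $|M_{kk}| \to \infty$, is exactly the standard route to this result. Note, however, that the paper supplies no proof of this lemma at all: it is stated as a known fact cited from the infinite-matrix literature, so there is no in-text argument to compare against. The domain point you flag does go through cleanly: since $D^{-1}$ is diagonal with entries $1/M_{kk}$, for $v \in \mathrm{Dom}(D)$ the convergent series defining $(FD^{-1})(Dv)_j$ telescopes entrywise to $\sum_k F_{jk} v_k$, so $(I + FD^{-1})D$ agrees with $M$ on $\mathrm{Dom}(D)$ and not merely formally, and $N$ maps $\ell^2(\N)$ into $\mathrm{Dom}(D)$, making both $NM = I$ and $MN = I$ legitimate.
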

\begin{lema}[Farid and Lancaster]
Let $M$ be an infinite matrix, considered as a linear operator on $\ell^p (\N)$ for $1\leq p <\infty$ fixed, with columns in $\ell^1 (\N)$. Define $r_k := \sum_{j \in \N , j \neq k} |M_{jk}|$ and assume that 

\begin{enumerate}
    \item[(i)] $M_{kk} \neq 0, \esp \text{for all} \esp \esp k \in \Z$ and $|M_{kk}| \to \infty$ as $|k| \to \infty$,
    \item[(ii)]There exist $s \in [0,1)$ such that for all $k \in \N$ $$r_k = s_k |M_{kk}|, \esp \esp s_k \in [0,s],$$
    \item[(iii)] Either $FD^{-1}$ and $(I + \mu FD^{-1})^{-1}$ exist and are in $\mathcal{L}(\ell^p (\N))$ for all $\mu \in (0,1]$, or $D^{-1}F$ and $(I + \mu D^{-1} F)^{-1}$ exist and are in $\mathcal{L}(\ell^p(\N))$ for all $\mu \in (0,1]$.
\end{enumerate}

Then $M$ is a closed operator, and the spectrum $Spec(M)$ in $\ell^p (\N)$ is nonempty and consists of discrete nonzero eigenvalues, lying in the set $$\bigcup_{k \in \N} \overline{B_\C (M_{kk} , r_k)},$$ where the closed balls $\overline{B_\C (M_{kk} , r_k)}$ are called the Gershgorin discs. Furthermore, any set consisting  of  $n$ Gershgorin discs whose union is disjoint from all other Gersgorin discs intersects $Spec(M)$ in a finite set of eigenvalues of $M$, with
total algebraic multiplicity $n$.
\end{lema}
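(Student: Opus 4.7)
The plan is to realize $M$ as a closed operator with compact resolvent, then transfer the classical Gershgorin argument to the infinite-dimensional setting via a Neumann-series estimate together with an analytic perturbation argument. Decompose $M = D + F$ where $D_{jk} = \delta_{jk} M_{jk}$ is diagonal and $F = M - D$. Condition (i) ensures $D^{-1}$ is diagonal with entries $1/M_{kk} \to 0$, hence defines a compact operator on $\ell^p(\N)$. Assume the first alternative in (iii); the second is symmetric. Then $FD^{-1}$ is bounded, $(I + FD^{-1})^{-1}$ is bounded, $M = (I + FD^{-1})D$ is closed, and $M^{-1} = D^{-1}(I + FD^{-1})^{-1}$ is the product of a compact operator with a bounded one, hence compact. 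Therefore $M$ has compact resolvent, and $Spec(M)$ consists of isolated nonzero eigenvalues of finite algebraic multiplicity, accumulating only at $\infty$; in particular $0 \in Res(M)$.

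For the Gershgorin localization, let $\lambda$ lie outside every closed disc $\overline{B_\C(M_{kk}, r_k)}$. Factor
\begin{equation*}
M - \lambda I = (I + F(D - \lambda I)^{-1})(D - \lambda I).
\end{equation*}
The operator $F(D - \lambda I)^{-1}$ has matrix entries $F_{jk}/(M_{kk} - \lambda)$, whose $k$-th column sum equals $r_k/|M_{kk} - \lambda| < 1$ by hypothesis, so $\norm{F(D - \lambda I)^{-1}}_{\mathcal{L}(\ell^1)} < 1$ and $I + F(D - \lambda I)^{-1}$ is invertible on $\ell^1$. The extension to general $p \in [1,\infty)$ uses condition (ii), which supplies the uniform margin $r_k/|M_{kk}| \leq s < 1$, together with condition (iii), which grants boundedness of $(I + \mu F D^{-1})^{-1}$ on $\ell^p$ for all $\mu \in (0,1]$; together these yield $\norm{(M - \lambda I)^{-1}}_{\mathcal{L}(\ell^p)} < \infty$ and hence $\lambda \in Res(M)$.

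For the algebraic multiplicity statement, fix a smooth contour $\Gamma$ enclosing exactly $n$ Gershgorin discs and disjoint from all other discs, and consider the homotopy $M(\mu) = D + \mu F$, $\mu \in [0,1]$. Applying the previous argument with $\mu F$ in place of $F$ (the Gershgorin radii become $\mu r_k \leq r_k$, so the discs only shrink) gives $\Gamma \subset Res(M(\mu))$ for every $\mu \in [0,1]$. The Riesz projection
\begin{equation*}
P(\mu) := -\frac{1}{2\pi i} \oint_\Gamma (M(\mu) - zI)^{-1} \, dz
\end{equation*}
then depends analytically on $\mu$ in the operator norm, so $\operatorname{rank} P(\mu)$ is constant along the homotopy. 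At $\mu = 0$, $M(0) = D$ is diagonal and $\operatorname{rank} P(0)$ equals the number of diagonal entries $M_{kk}$ inside $\Gamma$, which is exactly $n$; hence $\operatorname{rank} P(1) = n$, giving the stated total algebraic multiplicity. Taking $\Gamma$ around any isolated cluster of discs shows $Spec(M) \neq \emptyset$.

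The main technical obstacle is promoting the easy $\ell^1$-estimate on $F(D - zI)^{-1}$ to a uniform $\ell^p$ operator-norm estimate on $(M(\mu) - zI)^{-1}$ over $(\mu, z) \in [0,1] \times \Gamma$; this is precisely what is needed for the Riesz projections to be norm-continuous and for their rank to remain constant under the homotopy. Condition (iii) is engineered to supply exactly this bound, and condition (ii) is what keeps the associated Neumann series uniformly convergent along $\mu \in [0,1]$.
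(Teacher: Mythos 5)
The paper does not prove this lemma; it is stated as Lemma 5.6 and cited directly from the references (Farid--Lancaster, Shivakumar et al., Aleksi\'c et al.), so there is no internal proof to compare your argument against. Your overall architecture --- decompose $M=D+F$, show $M^{-1}=D^{-1}(I+FD^{-1})^{-1}$ is compact so the spectrum is discrete, factor $M-\lambda I=(I+F(D-\lambda I)^{-1})(D-\lambda I)$ for the localisation, and run a homotopy $M(\mu)=D+\mu F$ with Riesz projections for the multiplicity count --- is indeed the same strategy used in the literature, and the $\ell^1$ column-sum estimate, the compactness of $D^{-1}$, and the rank computation at $\mu=0$ are all correct.

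There is, however, a genuine gap at exactly the point you flag as the ``main technical obstacle,'' and it is not resolved by the appeal to conditions (ii)--(iii). Your Neumann-series estimate controls $\norm{F(D-\lambda I)^{-1}}_{\mathcal{L}(\ell^1)}$ via the column sums $r_k/|M_{kk}-\lambda|$, but column sums only bound the $\ell^1$ operator norm; they say nothing about the $\ell^p$ norm for $p>1$, where one would also need row-sum control (Schur test / interpolation), which conditions (i)--(iii) do not supply. Condition (iii) gives $(I+\mu FD^{-1})^{-1}\in\mathcal{L}(\ell^p)$, i.e.\ invertibility at $z=0$, but the homotopy requires $(I+\mu F(D-zI)^{-1})^{-1}\in\mathcal{L}(\ell^p)$ uniformly for $(\mu,z)\in[0,1]\times\Gamma$, and you never show how the first implies the second --- note $F(D-zI)^{-1}\neq \mu FD^{-1}$ for $z\neq0$. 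One can write $F(D-zI)^{-1}=FD^{-1}\cdot D(D-zI)^{-1}$ to get $\ell^p$-\emph{boundedness} from (iii) (since $D(D-zI)^{-1}$ is a bounded diagonal), but $\ell^p$-\emph{invertibility} of $I+\mu F(D-zI)^{-1}$ still does not follow: the resulting operator differs from $I+\mu FD^{-1}$ by a compact perturbation, which gives Fredholm index $0$, but injectivity on $\ell^p$ has to be argued separately (the $\ell^1$ injectivity you establish does not transfer automatically). Until that step is supplied, the analyticity of $P(\mu)$ along $\Gamma\subset\cap_\mu Res(M(\mu))$ --- and hence the constancy of $\operatorname{rank}P(\mu)$ --- is asserted rather than proved.
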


The previous Lemmas apply in $\ell^2(\MO)$ without major modifications. Next we will adapt these theorems to pseudo-differential operators in the context of the non-harmonic analysis.

\subsection{$L^2$-Boundedness and Spectrum localisation}

Consider a measurable function $\sigma : \Ombar \times \I \to \C $  such that $\sigma (\cdot , \xi) \in L^2 (\Omega) $ for each $\xi  \in \I$, and let $T_\sigma$ be its associated $\MO$-pseudo-differential operator. Then, at least formally, for $f \in C^\infty_\MO (\Ombar)$ we can write

\begin{align*}
    (T_\sigma f) (x) &= \sum_{\xi \in \I} \Big( \sum_{\eta \in \I} \widehat{\sigma} (\eta , \xi) u_\eta (x) \Big) \widehat{f} (\xi) u_\xi (x) \\
    &= \sum_{\xi \in \I}  \sum_{\eta \in \I} \widehat{\sigma} (\eta , \xi) \widehat{f} (\xi)  u_\eta (x) u_\xi (x).
\end{align*}
Recall that by Assumption (A) $$\int_\Omega |u_\eta (x) u_\xi (x)|^2 dx \leq C_b^2 \langle \xi \rangle^{2 \mu_0} \int_\Omega |u_\eta (x)|^2 dx < \infty, $$ and thus, we can decompose the function $u_\eta (x) u_\xi (x) \in L^2 (\Omega)$ in its $\MO$-Fourier series. This means that there exist coefficients $C^{\eta \xi}_\gamma \in \C$ such that $$u_\eta (x) u_\xi(x) = \sum_{\gamma \in \I} C_\gamma^{\eta \xi} u_\gamma (x).$$
From this we have
\begin{align*}
    \sum_{\xi \in \I}  \sum_{\eta \in \I} \widehat{\sigma} (\eta , \xi) \widehat{f} (\xi)  u_\eta (x) u_\xi (x) &= \sum_{\xi \in \I}  \sum_{\eta \in \I} \sum_{\gamma \in \I} \widehat{\sigma} (\eta , \xi) \widehat{f} (\xi) C^{\eta \xi}_\gamma u_\gamma (x)\\
    &=  \sum_{\gamma \in \I} \Big( \sum_{\xi \in \I} \Big(\sum_{\eta \in \I} \widehat{\sigma} (\eta , \xi)  C^{\eta \xi}_\gamma \Big) \widehat{f} (\xi) \Big) u_\gamma (x),
\end{align*}

so, the $\gamma$-th $\MO$-Fourier coefficient of $T_\sigma f$ is 

$$ \sum_{\xi \in \I} \Big(\sum_{\eta \in \I} \widehat{\sigma} (\eta , \xi)  C^{\eta \xi}_\gamma \Big) \widehat{f} (\xi), $$

which can be writen in terms of the matrix-vector product 
$$\sum_{\xi \in \I} (M_\sigma)_{\gamma \xi} \widehat{f} (\xi),$$

where
\begin{align*}
    (M_\sigma)_{\gamma \xi} :=\sum_{\eta \in \I} \widehat{\sigma} (\eta , \xi)  C^{\eta \xi}_\gamma &= \int_{\Omega} \sum_{\eta \in \I} \widehat{\sigma}(\eta , \xi) u_\eta (x) u_\xi (x)  \overline{v_\gamma (x)} dx \\ &=\int_{\Omega} \sigma(x, \xi) u_\xi(x) \overline{v_\gamma (x)} dx \\
    &= \int_{\Omega} T_\sigma u_\xi(x) \overline{v_\gamma (x)} dx \\
    &= ( T_\sigma u_\xi , v_\gamma )_{L^2 (\Omega)}.
\end{align*} 
This observation is the key fact of this section, and is the motivation for the following definition.

\begin{defi}\normalfont[Associated matrix]
Let $\sigma : \Ombar \times \I \to \C $ be a measurable function such that $\sigma (\cdot , \xi) \in L^2 (\Omega)$ for each $ \xi  \in \I$, and let $T_\sigma$ be its associated pseudo-differential operator. Then its  associated matrix $M_\sigma$ is defined as the infinite matrix with entries $$(M_\sigma)_{\gamma \xi} :=( T_\sigma u_\xi , v_\gamma )_{L^2 (\Omega)}.$$
\end{defi}

With this definition in mind, the operator $T_\sigma $ considered as acting in $L^2 (\Omega)$ can be factored through $\ell^2 (\MO)$ as the following diagram shows

\begin{center}
\begin{tikzcd}
L^2 (\Omega) \arrow{r}{T_{\sigma}} \arrow[swap]{d}{\mathcal{F}_{L}} & L^2 (\Omega)  \\%
\ell^2(\MO) \arrow{r}{M_\sigma}& \ell^2(\MO) \arrow{u}{\mathcal{F}^{-1}_{L}}
\end{tikzcd}
\end{center}
where $\mathcal{F}_\MO$ and $\mathcal{F}_\MO^{-1}$ are the $\MO$-Fourier transform and inverse $\MO$-Fourier transform defined in Section 3. These linear operators extend to unitary operators. For this reason, the operator $T_\sigma$ is bounded in $L^2 (\Omega)$ if and only if the infinite matrix $M_\sigma$ defines a bounded operator in $\ell^2 (\MO)$, and then $\norm{T_\sigma}_{\mathcal{L} (L^2(\Omega))} = \norm{M_\sigma}_{\mathcal{L} (\ell^2(\MO))}$. Also, by Lemma 3.8, the $\ell^2 (\MO)$-norm and the $\ell^2 (\I)$-norm are equivalent so, the properties of $M_\sigma$ as a linear operator on $\ell^2 (\MO)$ (boundedness, compactness, invertibility) are the same that as those of operator on $\ell^2 (\I)$. This allows us to apply Lemma 5.2 to give necessary and sufficient conditions for the $L^2$-boundedness of pseudo-differential operators. 

\begin{teo}
Let $\sigma : \Ombar \times \I \to \C $ be a measurable function such that $\sigma (\cdot , \xi) \in L^2 (\Omega)$ for each $\xi  \in \I$, and let $T_\sigma$ be its associated $\MO$-pseudo-differential operator. Let $|M|^2_{\sigma , n}$ be the finite matrix with entries 
$$(|M|^2_{\sigma , n})_{\gamma \xi} := \sum_{\zeta \in \I} \overline{( T_\sigma u_\gamma , v_\zeta )}_{L^2 (\Omega)} ( T_\sigma u_\xi , v_\zeta )_{L^2 (\Ombar)} = \overline{( \mathcal{F}_\MO T_\sigma u_\gamma , \mathcal{F}_\MO T_\sigma u_\xi )}_{\ell^2 (\I)} \esp , \esp |\gamma|,|\xi| \leq n
.$$
Then $T_\sigma$ defines a bounded operator on $L^2 (\Omega)$ if and only if the rows of the associated matrix $M_\sigma$ are in $\ell^2 (\MO)$ (equivalently in $\ell^2 (\I)$) and  $$\sup_{n \in \N} \norm{|M|^2_{\sigma , n}}_{\mathcal{L}(\ell^2_{\mu(n)} (\C))} < \infty,$$

\noindent where $\mu(n):= \#\{\xi \in \I : \esp |\xi| \leq n\}$. When this happens we have

\begin{align*}
\frac{k_1^2}{K_1^2} \sup_{n \in \N} \norm{|M|^2_{\sigma , n}}_{\mathcal{L}(\ell^2_{\mu(n)} (\C))} \leq  \norm{T_\sigma}_{\mathcal{L}(L^2 (\Omega))}^2 \leq \frac{K_1^2}{k_1^2} \sup_{n \in \N} \norm{|M|^2_{\sigma , n}}_{\mathcal{L}(\ell^2_{\mu(n)} (\C))},   
\end{align*} 
where $k_1 , K_1$ are the constants in Lemma 3.1.
\end{teo}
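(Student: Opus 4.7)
The plan is to transfer the $L^2(\Omega)$-boundedness question for $T_\sigma$ to an infinite-matrix question on $\ell^2(\I)$ and then invoke Crone's Lemma. First, I would exploit the factorisation $T_\sigma = \mathcal{F}_\MO^{-1} \circ M_\sigma \circ \mathcal{F}_\MO$ already recorded in the diagram preceding the statement. By Lemma 3.8, the $\MO$-Fourier transform extends to a bijection $L^2(\Omega) \to \ell^2(\I)$ which is bi-Lipschitz with constants $k_1 \le K_1$. Writing $\norm{T_\sigma f}_{L^2(\Omega)} = \norm{\mathcal{F}_\MO^{-1}(M_\sigma \widehat{f})}_{L^2(\Omega)}$ and combining the two Riesz-basis estimates (one for $\mathcal{F}_\MO^{-1}$ applied to $M_\sigma \widehat{f}$, the other for $\mathcal{F}_\MO$ applied to $f$) gives
$$\frac{k_1^2}{K_1^2}\,\norm{M_\sigma}^2_{\mathcal{L}(\ell^2(\I))} \le \norm{T_\sigma}^2_{\mathcal{L}(L^2(\Omega))} \le \frac{K_1^2}{k_1^2}\,\norm{M_\sigma}^2_{\mathcal{L}(\ell^2(\I))}.$$
In particular, $T_\sigma$ is bounded on $L^2(\Omega)$ if and only if $M_\sigma$ defines a bounded operator on $\ell^2(\I)$.

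Second, I would apply Crone's Lemma (Lemma 5.2) to the infinite matrix $M_\sigma$, after identifying $\I \subseteq \Z^l$ with a subset of $\N$. Its columns already lie in $\ell^2(\I)$, because the $\xi$-th column consists of the $\MO$-Fourier coefficients of $T_\sigma u_\xi = \sigma(\cdot,\xi)\,u_\xi \in L^2(\Omega)$, and its rows lie in $\ell^2(\I)$ precisely by the first hypothesis of the theorem. Crone's Lemma then yields
$$\norm{M_\sigma}^2_{\mathcal{L}(\ell^2(\I))} = \sup_{n \in \N} \norm{P_n M_\sigma^* M_\sigma P_n}_{\mathcal{L}(\ell^2(\I))},$$
where $P_n$ denotes the projection onto the span of $\{e_\xi : |\xi| \le n\}$.

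Third, the matrix $P_n M_\sigma^* M_\sigma P_n$ is represented on $\ell^2_{\mu(n)}(\C)$ by the finite matrix whose $(\gamma,\xi)$-entry, for $|\gamma|,|\xi| \le n$, is
$$\sum_{\zeta \in \I} \overline{(M_\sigma)_{\zeta \gamma}}\, (M_\sigma)_{\zeta \xi} = \sum_{\zeta \in \I} \overline{( T_\sigma u_\gamma, v_\zeta )}_{L^2(\Omega)}\, ( T_\sigma u_\xi, v_\zeta )_{L^2(\Omega)},$$
which is exactly the definition of $|M|^2_{\sigma,n}$. Combining this identification with Crone's formula and the norm comparison from the first step yields both the claimed equivalence and the two-sided norm bound.

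The only genuine subtlety is the asymmetry $K_1/k_1$ that must be tracked throughout: unlike the compact Lie group or toroidal case, the $\MO$-Fourier transform is an isometry from $L^2(\Omega)$ onto the weighted space $\ell^2(\MO)$, but only a bi-Lipschitz isomorphism onto the unweighted $\ell^2(\I)$ on which Crone's Lemma applies most cleanly. This loss of isometry is precisely what produces the gap between the upper and lower bounds in the statement.
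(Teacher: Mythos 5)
Your proposal is correct and follows essentially the same route as the paper: factor $T_\sigma$ through the associated matrix $M_\sigma$, use Lemma 3.8 to transfer between $\ell^2(\MO)$ and $\ell^2(\I)$ norms, apply Crone's Lemma (Lemma 5.2), and identify $P_n M_\sigma^* M_\sigma P_n$ with the finite matrix $|M|^2_{\sigma,n}$. You simply spell out the constant-tracking and the verification of Crone's row/column hypotheses in more detail than the paper's terse proof.
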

\begin{proof}
We just have to see that $$
    (M_{\sigma}^* M_\sigma)_{\gamma \xi} := \sum_{\zeta \in \I} \overline{( T_\sigma u_\gamma , v_\zeta )}_{L^2 (\Omega)} ( T_\sigma u_\xi , v_\zeta )_{L^2 (\Omega)} $$
and $$\norm {P_n  M_{\sigma}^*  M_\sigma  P_n}_{\mathcal{L}(\ell^2 (\I))} = \norm{|M|^2_{\sigma , n}}_{\mathcal{L} (\ell^2_{\mu(n)} (\C))}.$$
Since the $\ell^2 (\I)$-norm and the $\ell^2 (\MO)$-norm are equivalent (Lemma 3.8) the result follows as a direct application
of Lemma 5.2.
\end{proof}

\begin{rem}
When $u_\xi = v_\xi$ for all $\xi \in \I$, the $\ell^2 (\I)$-norm and the $\ell^2 (\MO)$-norm coincide, and the matrix $|M|^2_{\sigma , n}$ takes the form $$(|M|^2_{\sigma , n})_{\gamma \xi} = ( T_\sigma u_\xi , T_\sigma u_\gamma )_{L^2 (\Omega)}.$$For example this is the case when $\MO$ is self-adjoint. 
\end{rem}

\subsection{Spectrum Localisation} The purpose of this subsection is to extend  to some class of pseudo-differential operators the theorem enunciated below.

\begin{teo}[Gershgorin Circle Theorem]
Let $M$ be a $n \times n$ matrix with entries $a_{jk}$, and define $r_j :=  \sum_{ k \neq j} |a_{jk}|$. Then each eigenvalue $\lambda$ of $M$ lies in one of the disks $\overline{B_\C (a_{jj} ,  r_j)}$.
\end{teo}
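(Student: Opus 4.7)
The plan is to follow the classical argument by picking a component of an eigenvector where the modulus is maximal, and then exploiting the eigenvalue equation at that row. This is essentially the standard textbook proof; no pseudodifferential machinery from the earlier sections is needed, since the statement concerns only finite matrices.

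First I would let $\lambda$ be an eigenvalue of $M$ with associated nonzero eigenvector $v = (v_1, \ldots, v_n)^T \in \C^n$, so that $Mv = \lambda v$. Since $v \neq 0$, the set $\{|v_1|, \ldots, |v_n|\}$ attains its maximum at some index $i \in \{1, \ldots, n\}$, i.e.\ $|v_i| = \max_{1 \leq k \leq n} |v_k| > 0$. Reading off the $i$-th row of the identity $Mv = \lambda v$ gives
$$\sum_{k=1}^n a_{ik} v_k = \lambda v_i,$$
and isolating the diagonal term yields $(\lambda - a_{ii}) v_i = \sum_{k \neq i} a_{ik} v_k$.

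Next I would take absolute values and apply the triangle inequality, followed by the maximality of $|v_i|$:
$$|\lambda - a_{ii}|\,|v_i| \;\leq\; \sum_{k \neq i} |a_{ik}|\,|v_k| \;\leq\; |v_i| \sum_{k \neq i} |a_{ik}| \;=\; r_i\,|v_i|.$$
Dividing through by $|v_i| > 0$ gives $|\lambda - a_{ii}| \leq r_i$, so $\lambda \in \overline{B_\C(a_{ii}, r_i)}$. Since $i$ depends on $\lambda$ (through the choice of a maximal component of the corresponding eigenvector) but lies in $\{1, \ldots, n\}$, this shows that every eigenvalue of $M$ lies in the union $\bigcup_{j=1}^n \overline{B_\C(a_{jj}, r_j)}$, which is precisely the conclusion.

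There is essentially no main obstacle: the only delicate point is to justify that one may divide by $|v_i|$, which is ensured by choosing $i$ to be the index realising the maximum of $|v_k|$ (necessarily strictly positive since $v \neq 0$). Compared with the Farid--Lancaster infinite-matrix version recalled in Lemma~5.6, the finite-dimensional setting simplifies matters considerably, as there are no convergence, density-of-domain, or compactness issues to address.
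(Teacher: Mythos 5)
Your proof is correct; it is the standard row-sum argument for Gershgorin's theorem (choose a maximal-modulus component of an eigenvector, read off that row of $Mv = \lambda v$, and apply the triangle inequality). Note that the paper itself states this classical result without proof, treating it as background for the infinite-matrix extensions of Lemmas~5.5 and~5.6, so there is no paper proof to compare against; your argument fills that gap with the usual textbook reasoning and handles the one delicate point (dividing by $|v_i|$) correctly.
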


This theorem can be extended to operators that act on an infinite dimensional space, particularly to infinite matrices. There is a great quantity of literature on the subject (see for example \cite{shiva} and references therein) and indeed the Gershgorin theorem gives rise to an entire theory, called the Gershgorin theory. Lemmas 5.5 and 5.6 are examples of the achievements of this theory. Next we will rewrite their statements in the setting of the pseudo-differential operators.

\begin{teo}
Let $T_\sigma$ be a pseudo-differential operator with symbol $\sigma (x,\xi)$ such that $\sigma (\cdot,\xi) \in L^2 (\Omega)$ for each $\xi \in \I$. If $\sigma$ satisfies the following three properties:

\begin{enumerate}
        \item[(i)]$\inf_{\xi \in \I}  \Big| \int_{\Omega} \sigma (x, \xi) u_\xi (x) \overline{v_\xi (x)} dx \Big| > 0,$
    \item[(ii)]$\sup_{\xi \in \I} \Big(  \Big| \int_{\Omega} \sigma (x, \xi) u_\xi (x) \overline{v_\xi (x)} dx \Big|^{-1}  \sum_{ \zeta \neq \xi} |( T_\sigma u_\zeta , v_\xi )_{L^2(\Omega)}| \Big) < 1,$ 
    \item[(iii)]$\sup_{\xi \in \I} \Big(  \Big| \int_{\Omega} \sigma (x, \xi) u_\xi (x) \overline{v_\xi (x)} dx \Big|^{-1} \sum_{ \zeta \neq \xi} |( T_\sigma u_\xi , v_\zeta )_{L^2(\Ombar)}|\Big) < 1,$ 
\end{enumerate}
then $T_\sigma$ is an invertible linear operator with bounded inverse. In particular if

$$\lim_{|\xi| \to \infty}\big| \int_{\Omega} \sigma (x, \xi) u_\xi (x) \overline{v_\xi (x)} dx \big|= \infty,$$

\noindent the inverse is a compact operator.
\end{teo}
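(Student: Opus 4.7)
The plan is to pass from $T_\sigma$ acting on $L^2(\Omega)$ to its associated infinite matrix $M_\sigma$ from Definition 5.8 acting on $\ell^2(\I)$, and to solve $M_\sigma v = w$ by a weighted Neumann series. The starting observation is that the integrals in hypotheses (i)--(iii) are exactly the entries of $M_\sigma$:
\[ (M_\sigma)_{\xi\xi} = (T_\sigma u_\xi, v_\xi)_{L^2(\Omega)} = \int_\Omega \sigma(x,\xi) u_\xi(x) \overline{v_\xi(x)}\, dx, \]
while the off-diagonal row and column sums normalised by $|(M_\sigma)_{\xi\xi}|$ are exactly what (ii) and (iii) control. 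By the commutative diagram from Section 5.2, Plancherel (Proposition 3.10) and Lemma 3.8, bounded (respectively compact) invertibility of $T_\sigma$ on $L^2(\Omega)$ is equivalent to that of $M_\sigma$ on $\ell^2(\I)$.

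I decompose $M_\sigma = D + F$ with $D$ diagonal. Condition (i) makes $D$ boundedly invertible on $\ell^2(\I)$ with $\norm{D^{-1}}_{\mathcal{L}(\ell^2)} \leq c^{-1}$, where $c := \inf_\xi |(M_\sigma)_{\xi\xi}| > 0$, and (ii)--(iii) translate into strict row and column diagonal dominance with constants $s_1, s_2 \in [0,1)$. I then introduce the weighted norm $\norm{u}_*^2 := \sum_\xi |(M_\sigma)_{\xi\xi}| |u(\xi)|^2$ and its Hilbert space $\ell^2_*(\I)$. Applying Cauchy--Schwarz row-by-row to $L := D^{-1}F$, then swapping the order of summation and invoking (ii) and (iii), yields
\[ \norm{Lu}_*^2 \leq s_1 s_2 \norm{u}_*^2, \qquad \norm{Lu}_{\ell^2}^2 \leq \frac{s_1 s_2}{c}\norm{u}_*^2, \]
so that $L$ is a strict contraction on $\ell^2_*(\I)$. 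Since $\norm{D^{-1}w}_*^2 = \sum_\xi |w(\xi)|^2/|(M_\sigma)_{\xi\xi}| \leq c^{-1}\norm{w}_{\ell^2}^2$, the map $D^{-1} : \ell^2(\I) \to \ell^2_*(\I)$ is bounded, and for every $w \in \ell^2(\I)$ the series $v := \sum_{k \geq 0} (-L)^k D^{-1} w$ converges geometrically in $\ell^2_*(\I)$ and, via the second estimate, also in $\ell^2(\I)$; direct substitution gives $M_\sigma v = w$. This produces the bounded inverse of $M_\sigma$ on $\ell^2(\I)$ with norm controlled by $c, s_1, s_2$, and hence of $T_\sigma$ on $L^2(\Omega)$.

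For the compactness statement, assume further $|(M_\sigma)_{\xi\xi}| \to \infty$. Truncating $D^{-1}$ to $|\xi| \leq N$ is finite-rank, and its distance to $D^{-1}$ in the $\ell^2 \to \ell^2_*$ norm equals $\sup_{|\xi| > N} |(M_\sigma)_{\xi\xi}|^{-1/2} \to 0$, so $D^{-1} : \ell^2(\I) \to \ell^2_*(\I)$ is compact. Writing $M_\sigma^{-1} = A \circ D^{-1}$, where $A : \ell^2_*(\I) \to \ell^2(\I)$ is the bounded Neumann operator $\sum_k (-L)^k$ (the first estimate above makes $L$ bounded on $\ell^2_*$, the second makes $A$ map into $\ell^2$), exhibits $M_\sigma^{-1}$ as a composition of a compact and a bounded operator, hence compact; pulling back, $T_\sigma^{-1}$ is compact on $L^2(\Omega)$, matching the second half of Lemma 5.5.

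The main technical obstacle is the Cauchy--Schwarz estimate coupling (ii) and (iii) through the weight $|(M_\sigma)_{\xi\xi}|$. The two hypotheses are asymmetric — (ii) normalises by the row-diagonal and (iii) by the column-diagonal — and condition (i) only gives a lower bound on $|(M_\sigma)_{\xi\xi}|$, while the compactness regime explicitly requires this quantity to be unbounded above. Consequently a naive unweighted Schur test on $FD^{-1}$ fails because the row sums of $FD^{-1}$ mix a fixed row of $F$ with the varying column-diagonals of $D$. The choice of weight $|(M_\sigma)_{\xi\xi}|$ is what causes the row- and column-sum bounds coming from (ii) and (iii) to cancel against the weight after the swap of summation, leaving only the clean product $s_1 s_2 < 1$ that drives the Neumann iteration.
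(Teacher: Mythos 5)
Your proof is correct and takes a genuinely different route from the paper. The paper factors $M_\sigma = (I + F_\sigma D_\sigma^{-1}) D_\sigma$ and tries to bound $\norm{F_\sigma D_\sigma^{-1}}_{\mathcal{L}(\ell^2(\I))}$ by the $\ell^1$--$\ell^\infty$ Schur interpolation $\norm{M}_{\mathcal{L}(\ell^2)} \le \sqrt{\norm{M}_{\mathcal{L}(\ell^1)}\norm{M}_{\mathcal{L}(\ell^\infty)}}$, claiming that the row-sum norm $\norm{F_\sigma D_\sigma^{-1}}_{\mathcal{L}(\ell^\infty)}$ equals the quantity $a_1$ controlled by hypothesis (ii) and the column-sum norm $\norm{F_\sigma D_\sigma^{-1}}_{\mathcal{L}(\ell^1)}$ equals $a_2$ from (iii), and then invoking the Neumann series. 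You instead use the other factoring $M_\sigma = D(I + D^{-1}F)$ and run the iteration for $L = D^{-1}F$ in the weighted space $\ell^2_*$, which amounts to a weighted Schur test done by hand: a row-wise Cauchy--Schwarz hands the row sum to (ii), and the summation swap hands the column sum to (iii), giving $\norm{L}_{\mathcal{L}(\ell^2_*)}^2 \le s_1 s_2 < 1$.

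You were right to flag the unweighted Schur test as the obstacle, and in fact the paper's identification $\norm{F_\sigma D_\sigma^{-1}}_{\mathcal{L}(\ell^\infty(\I))} = a_1$ is not valid as written. The $\gamma$-th row of $F_\sigma D_\sigma^{-1}$ has $\ell^1$-norm $\sum_{\xi\ne\gamma}|(M_\sigma)_{\gamma\xi}|/|(M_\sigma)_{\xi\xi}|$, the denominator varying with the column index, whereas $a_1 = \sup_\gamma |(M_\sigma)_{\gamma\gamma}|^{-1}\sum_{\xi\ne\gamma}|(M_\sigma)_{\gamma\xi}|$ uses the single row-diagonal; these agree only when the diagonal of $M_\sigma$ is constant, which is precisely excluded in the compactness regime $|(M_\sigma)_{\xi\xi}|\to\infty$. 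Indeed one can construct finite matrices satisfying (i)--(iii) for which $\norm{F_\sigma D_\sigma^{-1}}_{\mathcal{L}(\ell^2)} > 1$, so the paper's inequality chain does not close there. Your weight $|(M_\sigma)_{\xi\xi}|$ is exactly what cancels these mismatched normalisations, which is why the two hypotheses decouple cleanly into the product $s_1 s_2$. As a bonus you get an explicit operator-norm bound $\norm{M_\sigma^{-1}}_{\mathcal{L}(\ell^2)} \le c^{-1}(1-\sqrt{s_1 s_2})^{-1}$, and the factoring $M_\sigma^{-1} = A\circ D^{-1}$ through $\ell^2_*$ makes the compactness of the inverse a one-line consequence of $|(M_\sigma)_{\xi\xi}|\to\infty$ rather than a citation of Lemma 5.5. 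The one point you may want to spell out is injectivity: if $M_\sigma v=0$ with $Dv\in\ell^2$ then $v\in\ell^2_*$ (since $\norm{v}_*^2 \le c^{-1}\norm{Dv}_{\ell^2}^2$), and $v=-Lv$ forces $\norm{v}_*\le\sqrt{s_1s_2}\norm{v}_*$, hence $v=0$.
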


\begin{proof}
Let $M_\sigma$ be the associated matrix of $T_\sigma$. We will show that this infinite matrix, considered as acting on $\ell^2 (\I)$, satisfies the hypothesis of Lemma 5.5. This is enough because of Proposition 3.10, and because for any infinite matrix $M$ one has $M \in \mathcal{L} (\ell^2 (\MO))$ if and only if $M \in \mathcal{L}(\ell^2 (\I))$, and for $\lambda \in \C$,  $(M- \lambda I)^{-1} \in \mathcal{L} (\ell^2 (\MO))$
if and only if $(M - \lambda I)^{-1} \in \mathcal{L}(\ell^2 (\I))$, in virtue of Lemma 3.8. First it is easy to see that (i) and (ii) in Theorem 5.3 are equivalent to (i) in Lemma 5.3. For the remaining hypothesis define  $D_\sigma$ and $F_\sigma$ as
$$(D_\sigma)_{\gamma \xi} := \delta_{\gamma \xi} (M_\sigma)_{\gamma \xi} \esp \esp \text{and } \esp \esp (F_\sigma)_{\gamma \xi}:=(1-\delta_{\gamma \xi}) (M_\sigma)_{\gamma \xi}$$ and $$(FD^{-1}_{\sigma , n})_{\gamma \xi} := (F_\sigma D_\sigma^{-1})_{\gamma \xi}, \esp \esp |\gamma|,|\xi| \leq n.$$
Then 
\begin{align*}
    \norm{F_\sigma D_\sigma^{-1}}_{\mathcal{L}(\ell^2(\I))} &= \norm{I - (I + FD^{-1})}_{\mathcal{L}(\ell^2(\I))} \\ &= \sup_{n \in \N} ||FD^{-1}_{\sigma , n}||_{\mathcal{L}(\ell^2_{\mu(n)} (\C))}\\&\leq \sup_{n \in \N} \sqrt{ ||FD^{-1}_{\sigma , n}||_{\mathcal{L}(\ell^1_{\mu(n)} (\C))} ||FD^{-1}_{\sigma , n}||_{\mathcal{L}(\ell^\infty_{\mu(n)} (\C))}} \\ & \leq \sqrt{ \norm{F_\sigma D_\sigma^{-1}}_{\mathcal{L} (\ell^1 (\I))} \norm{ F_\sigma  D_\sigma^{-1}}_{\mathcal{L} (\ell^\infty (\I))}}. 
\end{align*}
As it is known, the operator norm of an infinite matrix acting on $\ell^1(\I)$ equals the supremum of the $\ell^1$-norms of its columns, and the operator norm on $\ell^\infty (\I)$ equals the supremum of the $\ell^1$-norms of its rows. Note that the entries of $F_\sigma D_{\sigma}^{-1}$ are \[(F_\sigma D_{\sigma}^{-1})_{\gamma \xi} = \begin{cases}
0, & \text{if } \esp \gamma = \xi, \\
\big( \int_{\Omega} \sigma (x, \xi) u_{\xi} (x) \overline{v_\xi (x)} dx \big)^{-1} (T_\sigma u_\xi , v_{\gamma})_{L^2 (\Omega)}, & \text{if} \esp \gamma \neq \xi ,

\end{cases}\]and from this we get $$\norm{F_\sigma D_\sigma^{-1}}_{\mathcal{L} (\ell^\infty (\I))} = a_1 \esp \esp \text{and} \esp \esp \norm{F_\sigma D_\sigma^{-1}}_{\mathcal{L} (\ell^1 (\I))} = a_2,$$
 \noindent where

\begin{enumerate}
    \item[] $a_1= \sup_{\xi \in \I}  \Big| \int_{\Omega} \sigma (x, \xi) u_\xi (x) \overline{v_\xi (x)} dx \Big|^{-1}  \sum_{ \zeta \neq \xi} |( T_\sigma u_\zeta , v_\xi )_{L^2(\Omega)}|$,
    \item[]$ a_2 =  \sup_{\xi \in \I}  \Big| \int_{\Omega} \sigma (x, \xi) u_\xi (x) \overline{v_\xi (x)} dx \Big|^{-1} \sum_{ \zeta \neq \xi} |( T_\sigma u_\xi , v_\zeta )_{L^2(\Omega)}|$,
\end{enumerate}
so $\norm{FD^{-1}}_{\mathcal{L}(\ell^2(\I))} < 1$ and by Lemma 2.1 in \cite{conwayfa} the operator defined by $I + {F_\sigma}{D_\sigma}^{-1}$ is invertible with bounded inverse in $\ell^2 (\I)$, consequently in $\ell^2 (\MO)$. For this reason the operator $$M_\sigma = D_\sigma + F_\sigma = (I + F_\sigma D_\sigma^{-1} )D_\sigma,$$ is invertible with bounded inverse $$D_\sigma^{-1}(I + F_\sigma D_\sigma^{-1} )^{-1},$$ wich is compact if $\sigma(x, \xi)$ satisfy $$\lim_{|\xi| \to \infty} (M_\sigma)_{\xi \xi} = \lim_{|\xi| \to \infty}\int_{\Omega}\sigma (x, \xi) u_\xi (x) \overline{v_\xi (x)} dx = \infty.$$ This completes the proof.
\end{proof}
\begin{coro}
Let $\lambda$ be a complex number and define $\sigma_\lambda (x , \xi) := \sigma(x , \xi) - \lambda$. If $\sigma_\lambda$ satisfies the hypothesis of Theorem 5.11 then $\lambda \in Res(T_\sigma)$.
\end{coro}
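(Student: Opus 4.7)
The plan is to reduce the corollary directly to Theorem~5.11 by identifying $T_{\sigma_\lambda}$ with $T_\sigma - \lambda I$. The key observation is that the $\MO$-quantisation is linear in the symbol, so subtracting a constant from $\sigma(x,\xi)$ in the Fourier variable corresponds precisely to subtracting a multiple of the identity from the operator.

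Concretely, I would first note that for any $f \in \mathrm{Span}\{u_\xi\}$ (and, by extension via the density statements, for any $f$ for which the series converges), one has
\begin{align*}
T_{\sigma_\lambda} f(x) &= \sum_{\xi \in \I} \sigma_\lambda(x,\xi)\,\widehat{f}(\xi)\, u_\xi(x) \\
&= \sum_{\xi \in \I} \sigma(x,\xi)\,\widehat{f}(\xi)\, u_\xi(x) - \lambda \sum_{\xi \in \I} \widehat{f}(\xi)\, u_\xi(x) \\
&= T_\sigma f(x) - \lambda f(x),
\end{align*}
where in the last line I invoke the Fourier inversion formula from Proposition~3.7. Hence $T_{\sigma_\lambda} = T_\sigma - \lambda I$ as operators on $L^2(\Omega)$.

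With this identification in hand, the hypothesis of the corollary is that $\sigma_\lambda$ fulfils conditions (i)--(iii) of Theorem~5.11. Applying that theorem to $\sigma_\lambda$ produces the conclusion that $T_{\sigma_\lambda} = T_\sigma - \lambda I$ is an invertible operator on $L^2(\Omega)$ with bounded inverse; by definition of the resolvent set, this is exactly the statement $\lambda \in Res(T_\sigma)$.

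There is no real obstacle here: the only step requiring slight care is justifying the splitting of the series that yields $T_{\sigma_\lambda} = T_\sigma - \lambda I$, which requires that both series converge in the sense in which $T_\sigma$ has already been defined (initially on $\mathrm{Span}\{u_\xi\}$, and then extended through the associated matrix framework developed in Section~5.2). Since the constant symbol $\lambda$ trivially produces the bounded operator $\lambda I$, and $T_\sigma$ is bounded on $L^2(\Omega)$ by Theorem~5.9 (whose hypotheses are automatic under the conclusion of Theorem~5.11), the identity $T_{\sigma_\lambda} = T_\sigma - \lambda I$ extends from $\mathrm{Span}\{u_\xi\}$ to all of $L^2(\Omega)$ by density, and the corollary follows.
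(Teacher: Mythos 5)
Your argument is correct and is precisely the intended reading: the paper states this as an unproved corollary to Theorem~5.11, and the implicit proof is exactly what you wrote, namely that the quantisation is linear in the symbol so $T_{\sigma_\lambda} = T_\sigma - \lambda I$ (via Fourier inversion), whence Theorem~5.11 applied to $\sigma_\lambda$ gives a bounded inverse of $T_\sigma - \lambda I$ and hence $\lambda \in Res(T_\sigma)$. One small remark: the extension from $\mathrm{Span}\{u_\xi\}$ to $L^2(\Omega)$ is perhaps most cleanly seen at the level of the associated matrix, where biorthogonality gives $(M_{\sigma_\lambda})_{\gamma\xi} = (M_\sigma)_{\gamma\xi} - \lambda\delta_{\gamma\xi}$, so $M_{\sigma_\lambda} = M_\sigma - \lambda I$ on $\ell^2(\MO)$ directly; but your density argument reaches the same conclusion.
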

As an immediate consequence of Lemma 5.6 we have:
\begin{teo}
Let $\sigma : \Ombar \times \I \to \C$ be a measurable function such that $\sigma (\cdot , \xi) \in L^2 (\Omega)$ for each $\xi \in \I$, and let $T_\sigma$ be its associated pseudo-differential operator. Let $M_\sigma$ be the associated matrix. Assume that

\begin{enumerate}
    \item[(i)] $\int_{\Omega} \sigma (x, \xi) u_\xi (x) \overline{v_\xi (x)} dx \neq 0$ for all $\xi \in \I$, 
    \item[(ii)]$\lim_{|\xi| \to \infty}\big| \int_{\Omega} \sigma (x, \xi) u_\xi (x) \overline{v_\xi (x)} dx \big| = \infty$,
    \item[(iii)] Rows of $M_\sigma$ are in $\ell^2 (\I)$ and the columns are in $\ell^1 (\I)$,
    \item[(iv)]$\sup_{\xi \in \I} \Big( \Big| \int_{\Omega} \sigma (x, \xi) u_\xi (x) \overline{v_\xi (x)} dx \Big|^{-1} \sum_{ \zeta \neq \xi} |( T_\sigma u_\xi , v_\zeta )_{L^2(\Ombar)}| \Big) < 1$. 
\end{enumerate}
Then $T_\sigma$ is a closed operator and the spectrum $Spec(T_\sigma)$ is nonempty and consists of discrete nonzero eigenvalues, lying in the set $$\bigcup_{\xi \in \I} \overline{B_\C (a_{\xi} , r_\xi)},$$
where $$a_{\xi} = \int_\Omega \sigma (x, \xi) u_\xi (x) \overline{v_\xi (x)} dx \esp \esp \text{and} \esp \esp r_\xi :=\sum_{ \zeta \neq \xi} |( T_\sigma u_\xi , v_\zeta )_{L^2(\Omega)}|. $$
Furthermore, any set of $n$ Gershgorin discs whose union is disjoint from all other Gersgorin discs intersects $Spec(T_\sigma)$ in a finite set of eigenvalues of $T_\sigma$ with
total algebraic multiplicity $n$.
\end{teo}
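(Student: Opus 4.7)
The plan is to reduce the statement to a direct application of Lemma 5.6 (Farid--Lancaster) for the associated matrix $M_\sigma$, and then transfer the spectral conclusion back to $T_\sigma$ on $L^2(\Omega)$ through the commutative diagram of Subsection 5.2. The bookkeeping is essentially a translation of hypotheses, with one genuine analytic step hidden in verifying hypothesis (iii) of Lemma 5.6.

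First I would make the dictionary explicit. Since $(M_\sigma)_{\gamma\xi} = ( T_\sigma u_\xi, v_\gamma)_{L^2(\Omega)}$, the diagonal entries are exactly $(M_\sigma)_{\xi\xi} = a_\xi$, while $r_\xi = \sum_{\gamma\neq\xi}|(M_\sigma)_{\gamma\xi}|$ is the off-diagonal column sum of $M_\sigma$ at $\xi$. Hypotheses (i)--(ii) of the theorem then yield $\inf_\xi|a_\xi|>0$ and $|a_\xi|\to\infty$, which is hypothesis (i) of Lemma 5.6; hypothesis (iv) gives $s:=\sup_\xi r_\xi/|a_\xi| < 1$, so setting $s_\xi := r_\xi/|a_\xi| \in [0,s]$ verifies hypothesis (ii) of Lemma 5.6 with a common bound $s<1$.

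Next I would verify hypothesis (iii) of Lemma 5.6. Writing $M_\sigma = D_\sigma + F_\sigma$ as the split into its diagonal and off-diagonal parts, one has
\[
(F_\sigma D_\sigma^{-1})_{\gamma\xi} = \begin{cases} 0, & \gamma = \xi,\\ a_\xi^{-1}\, ( T_\sigma u_\xi, v_\gamma)_{L^2(\Omega)}, & \gamma\neq\xi. \end{cases}
\]
Exactly as in the proof of Theorem 5.11, the $\ell^1(\I)\to\ell^1(\I)$ operator norm of $F_\sigma D_\sigma^{-1}$ equals the supremum of the $\ell^1$ norms of its columns, which by hypothesis (iii)--(iv) of the theorem is precisely $\sup_\xi r_\xi/|a_\xi| = s < 1$. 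Hence for every $\mu\in(0,1]$ one has $\|\mu F_\sigma D_\sigma^{-1}\|_{\mathcal L(\ell^1(\I))}\leq s<1$, and a Neumann-series argument produces the bounded inverse $(I+\mu F_\sigma D_\sigma^{-1})^{-1}\in\mathcal L(\ell^1(\I))$. Combining the column-$\ell^1$ bound just obtained with the row-$\ell^2$ assumption of (iii) through a Schur-test interpolation argument (as was used in Theorem 5.11 to pass from $\ell^1$ and $\ell^\infty$ to $\ell^2$) then gives the corresponding boundedness on $\ell^2(\I)$, verifying hypothesis (iii) of Lemma 5.6 in the form compatible with the $p=2$ statement that we ultimately need.

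Finally, Lemma 5.6 delivers that $M_\sigma$ is a closed operator whose spectrum is nonempty and consists of discrete nonzero eigenvalues lying in $\bigcup_{\xi\in\I}\overline{B_\C(a_\xi,r_\xi)}$, with the claimed multiplicity statement for any union of Gershgorin discs isolated from the rest (nonzero because $|a_\xi|-r_\xi\geq(1-s)|a_\xi|>0$, and discrete because $|a_\xi|\to\infty$ forces the discs to accumulate only at infinity). The transfer to $T_\sigma$ is then automatic: the diagram of Subsection 5.2 shows that $\mathcal F_\MO$ intertwines $T_\sigma$ on $L^2(\Omega)$ with $M_\sigma$ on $\ell^2(\MO)$, and by Lemma 3.8 the $\ell^2(\MO)$ and $\ell^2(\I)$ norms are equivalent, so closedness, spectra, resolvent sets and algebraic multiplicities are preserved. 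I expect the main technical obstacle to be precisely the reconciliation between the column-$\ell^1$ datum used to run the Neumann series and the $\ell^2$-setting required for the spectral statement on $L^2(\Omega)$; everything else is essentially a translation via the matrix/operator correspondence.
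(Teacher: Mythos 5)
Your high-level strategy — read off the dictionary between $M_\sigma$ and the Farid--Lancaster data, apply Lemma 5.6, then transfer through the commutative diagram of Subsection 5.2 and the norm equivalence of Lemma 3.8 — is exactly what the paper intends when it states Theorem 5.13 as ``an immediate consequence of Lemma 5.6.'' The identification $(M_\sigma)_{\xi\xi}=a_\xi$, $r_\xi = \sum_{\gamma\neq\xi}|(M_\sigma)_{\gamma\xi}|$, and the translation of hypotheses (i), (ii), (iv) into Lemma 5.6's hypotheses (i)--(ii) with a uniform $s<1$ are all correct.

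The gap is in your verification of hypothesis (iii) of Lemma 5.6 in the form needed for $p=2$. You assert that one can combine the column $\ell^1$ bound on $F_\sigma D_\sigma^{-1}$ (which gives $\|F_\sigma D_\sigma^{-1}\|_{\mathcal L(\ell^1(\I))}\leq s<1$) with the ``row-$\ell^2$'' datum of hypothesis (iii) via a Schur-test interpolation to get boundedness, and hence invertibility of $I+\mu F_\sigma D_\sigma^{-1}$, on $\ell^2(\I)$. This does not work. The interpolation used in the proof of Theorem 5.11 is the $\ell^1$--$\ell^\infty$ Schur test: $\|A\|_{\mathcal L(\ell^2)}\leq \bigl(\|A\|_{\mathcal L(\ell^1)}\,\|A\|_{\mathcal L(\ell^\infty)}\bigr)^{1/2}$, and the $\ell^\infty$-operator norm of $F_\sigma D_\sigma^{-1}$ is the supremum over $\gamma$ of the $\ell^1$-norms of its rows, i.e.\ $\sup_\gamma\sum_{\xi\neq\gamma}|a_\xi|^{-1}|(M_\sigma)_{\gamma\xi}|$. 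Hypothesis (iii) of the theorem only puts the rows of $M_\sigma$ in $\ell^2$ — and even that is a statement about $M_\sigma$, not $F_\sigma D_\sigma^{-1}$ — which gives neither the needed $\ell^1$ row summability nor the uniformity in $\gamma$. (Rows of $M_\sigma$ in $\ell^2$ are what guarantee that $M_\sigma$ acts componentwise on $\ell^2$ at all; they are not an $\ell^\infty$ operator bound.) As it stands, the column bound alone gives the Neumann inverse of $I+\mu F_\sigma D_\sigma^{-1}$ only in $\mathcal L(\ell^1(\I))$, i.e.\ Lemma 5.6 with $p=1$, whose spectral conclusion concerns $\ell^1(\I)$ and does not transfer automatically to the $\ell^2(\MO)$ setting where $T_\sigma$ lives. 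To close the gap you would need either an additional uniform row-$\ell^1$ bound on $F_\sigma D_\sigma^{-1}$ (paralleling hypothesis (ii) of Theorem 5.11, which makes the Schur test legitimate there), or a separate argument identifying the $\ell^1$- and $\ell^2$-spectra under the stated hypotheses. The paper's own ``immediate consequence'' assertion does not supply either, so your attempt exposes a step that the text glosses over.
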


\begin{rem}
All the analysis made in this section can be done analogously for the $\MO^*$-case, using the $\MO^*$-Fourier transform, and the infinite matrix associated to a $\MO^*$-pseudo-differential operator with symbol $\tau (x , \xi)$.  
\end{rem}

\subsection{Examples} We can use Theorem 5.13 to localise the spectrum of  operators with $\MO$-symbols of the form $\alpha(\xi) + V(x)$, in the context  some of the examples presented in Section 2.

\begin{enumerate}
    \item[(i).]  In the context of Example 2.1, consider functions $\alpha : \Z^d \to \C$  and $V \in \mathcal{F}_{\T^d}^{-1} ( \ell^1 (\Z^d))$ such that $$\alpha (\xi) \neq - \int_{\Td} V(x) dx , \esp \esp \text{for all }\esp \esp \xi \in \Z^d.$$ The associated matrix to the symbol $\sigma (x , \xi) = \alpha (\xi) + V (x)$ has entries 
\[ (M_\sigma)_{\gamma \xi} =(T_\sigma e^{i x \cdot \xi} , e^{i x \cdot \gamma}) =\widehat{\sigma} (\gamma-\xi,\xi) = 
\begin{cases} 
      \widehat{V}(\gamma-\xi), & \gamma \neq \xi, \\
      \alpha(\xi) + \int_{\Td} V(x) dx ,& \gamma=\xi,
   \end{cases}
\] and then the hypotheses that the symbol must satisfy in order to apply the Theorem 5.13 are:

\begin{enumerate}
    \item[(i)] $\alpha(\xi) + \int_{\Td} V(x) dx \neq 0$ for all $\xi \in \I$, 
    \item[(ii)]$\lim_{|\xi| \to \infty}|\alpha(\xi)| = \infty$,
    \item[(iii)] $V \in \mathcal{F}^{-1}_{\Td} (\ell^1 (\Z^d))$,
    \item[(iv)]$\sum_{ \zeta \neq \xi} |( T_\sigma e^{i x \cdot \xi }, e^{i x \cdot \zeta})_{L^2(\Td)}| = ||\mathcal{F}_{\Td} V||_{\ell^1 (\Z^d)} - \big| \int_{\Td} V(x) dx \big|< \big| \alpha (\xi) + \int_{\Td} V(x) dx \big|,$ for all $\xi \in \Z^d$.  
\end{enumerate}
Under this hypothesis the spectrum of the toroidal pseudo-differential operator associated to the symbol $\sigma (x , \xi) = \alpha (\xi) + V (x)$ is contained in the set $$\bigcup_{\xi \in \Z^d} \overline{B_\C (a_{\xi} , r)},$$
where $$a_{\xi} =\alpha (\xi) +  \int_{\Td} V(x) dx \esp \esp \text{and} \esp \esp r :=||\mathcal{F}_{\Td} V ||_{\ell^1 (\Z^d)} - \big| \int_{\Td} V(x) dx \big|,  $$ as a consequence of Theorem 5.13. This shows that the spectrum of the operator is purely discreet and the eigenvalues grow as the function $\alpha(\xi)$.   
\item[(ii).] Let us take functions $\alpha: \Z^d \to \C$ and $V: [0,2\pi]^d \to \C$ such that $\alpha (\xi)$ tends to infinity and grow at most polynomially, $V \in C^{d+1} [0,2\pi]^d$ and $\alpha (\xi) \neq - \int_{[0,2\pi]^d} V(x) dx$ for all $\xi \in \Z^d$. One can see that, for symbols $\sigma (x , \xi) = \alpha (\xi) + V (x)$, the associated matrix in the contexts of Examples 2.1 and 2.2 coincide, even when the operators are different. For this reason, as before, if we have $$\sum_{\zeta \neq 0} \big| \int_{(0,2\pi)^d} V(x) e^{-i \zeta \cdot x} dx \big| <\big| \alpha(\xi) +  \int_{(0,2\pi)^d} V(x) dx \big|,  $$ for all $\xi \in \Z^d$, then in the context of Example 2.2 the $\MO$-symbol $\sigma (x , \xi) = \alpha(\xi) + V(x)$ satisfies the conditions of Theorem 5.13, thus, as in the previous example, the spectrum of the associated $\MO$-pseudo-differential operator is contained in the set $$\bigcup_{\xi \in \Z^d} \overline{B_\C (a_{\xi} , r)},$$
where $$a_{\xi} =\alpha (\xi) +  \int_{(0, 2\pi)^d} V(x) dx,  $$ and $$ r := \sum_{\zeta \neq 0 } \big|\int_{(0, 2\pi)^d} V(x) e^{-i\zeta \cdot  x} dx \big|. $$
\item[(iii).] In the context of Example 2.5, for symbols $\sigma (x , \xi) = \alpha(\xi) + V(x)$, $\alpha: \N_0 \to \R$ and $V \in C^2 [0,1]$, we have, first $$(T_\sigma u_n , v_k)_{L^2 (\Omega)} =\alpha (n) \delta_{nk} + \int_0^1 V(x) u_n (x) v_k (x) dx, $$ second \[(T_\sigma u_n , v_n)_{L^2 (\Omega)} = 
\begin{cases}
\alpha(n) + \int_0^1 2x V(x) dx, & \text{if } \esp \esp n=0, \\
\alpha(n) + \int_0^1 4 (1-x) \sin^2 (2 \pi m x) V(x) dx, & \text{if} \esp \esp n = 2m - 1,\\ \alpha (n) + \int_0^1 4x \cos^2 (2 \pi m x) V(x) dx, & \text{if} \esp \esp n= 2m,
\end{cases}
\]and third, the sum  
$$\sum_{k \neq n} |(T_\sigma u_n , v_k)_{L^2 (\Omega)}| ,$$ is equal to $$  \sum_{k \geq 1} \big| \int_0^1 4x (1-x) \sin (2 \pi kx) V(x) dx\big| + \big| \int_0^1 4x \cos (2 \pi k x) V(x) dx \big| $$ if $n=0$; to \begin{align*}
     \big| \int_0^1 2\sin (2 \pi m x) V(x) dx \big| + \sum_{k \neq m} &\big| \int_0^1 4 (1-x) \sin (2 \pi kx) \sin(2 \pi m x) V(x) dx\big|\\ &+ \sum_{k \in \N_0}  \big| \int_0^1 4 \cos (2 \pi k x) \sin (2 \pi mx) V(x) dx \big|
\end{align*}

if $n= 2m-1$; and to 
 \begin{align*}
    \big| \int_0^1 2 x \cos (2 \pi m x) V(x) dx \big| &+ \sum_{k \in \N_0}  \big| \int_0^1 4 x (1-x) \sin (2 \pi kx) \cos(2 \pi m x) V(x) dx\big|\\ &+ \sum_{k \neq m} \big| \int_0^1 4x \cos (2 \pi k x) \cos (2 \pi mx) V(x) dx \big|
\end{align*}

if $n=2m$. In any case the above quantities are equal to $$||\mathcal{F}_L V(x) u_n (x)||_{\ell^1 (\I)} - \big| \int_0^1 V(x) u_n (x) v_n (x) dx \big|,$$so if $$\big| \alpha (n) + \int_0^1 V(x) u_n (x) v_n (x) dx \big| > || \mathcal{F}_L V(x) u_n (x)||_{\ell^1 (\I)} - \big| \int_0^1 V(x) u_n (x) v_n (x) dx \big|,$$ for all $n \in \N_0$ then the spectrum of the associated $\MO$-pseudo-differential operator $T_\sigma$ is contained in the set $$\bigcup_{n \in \N_0} \overline{B_\C (a_{n} , r_n)},$$
where $$a_{n} =\alpha (n) +  \int_{(0, 1)} V(x) u_n (x) v_n (x)dx  $$ and $$ r_n :=|| \mathcal{F}_\MO V(x) u_n (x)||_{\ell^1 (\N_0)} - \big| \int_0^1 V(x) u_n (x) v_n (x) dx \big| .$$ As before, this shows that eigenvalues of $T_\sigma$ grow as $\alpha(n)$. 
\end{enumerate}

\begin{rem}
We note that, if the eigenfunctions $(w_\xi)_{\xi \in \I}$, with corresponding eigenvalues $(\chi_\xi)_{\xi \in \I}$, of the pseudo-differential operator associated with the $\MO$-symbol $\sigma(x , \xi):= \alpha (\xi) + V (x)$ form a basis in $L^2 (\Omega)$, then one can construct the solutions  to the equation 
\begin{align*}
    \frac{\partial f}{ \partial t}  + T_\sigma f =0, \esp \esp f(0,x) = f_0 = \sum_{\xi \in \I} f_\xi w_\xi (x) \in L^2 (\Omega), \tag{HE}
\end{align*}as  $$f (t,x) = \sum_{\xi \in \I} f_\xi e^{- \chi_\xi t} w_\xi (x) .$$ This in fact is true for every pseudo-differential operator, and for elliptic $\MO$-symbols in a Hörmander class it is possible to ensure smoothness of solutions. We dedicate the following subsection to prove this fact. Part of it is the adaptation of the the work of M. Pirhayati in \cite{Pirhayati2011} to the present setting.
\end{rem}

\subsection{An application to generalised heat equations}  To begin with we have the following straightforward results.

\begin{pro}
Let $0 \leq \rho \leq 1$. Let $T_\sigma$ be a pseudo-differential operator with $\MO$-symbol $\sigma  \in  S^{m}_{\rho , 0} (\Ombar \times \I) ,$ for some $m<0$. Then if $T_\sigma$ has an eigenfunction, it is in $C^\infty_\MO (\Ombar)$. 
\end{pro}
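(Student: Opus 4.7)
The plan is a standard elliptic-regularity-type bootstrap: since $T_\sigma$ has symbol of strictly negative order, it gains regularity in the Sobolev scale, and iterating the eigenvalue equation transfers this gain onto the eigenfunction itself.

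Let $f \in L^2(\Omega)$ satisfy $T_\sigma f = \lambda f$, with $\lambda \neq 0$ (the case $\lambda = 0$ being a separate matter). The key input I would use is the Sobolev mapping property
\[
T_\sigma : \mathcal{H}^s_\MO(\Omega) \longrightarrow \mathcal{H}^{s-m}_\MO(\Omega), \qquad s \in \R,
\]
derived from the symbolic calculus of Section 3. Concretely, denoting by $m_0$ the order of $\MO$ from Assumption (A), the conjugated operator $\MO^{(s-m)/m_0} \circ T_\sigma \circ \MO^{-s/m_0}$ has, via the composition formula (Lemma 3.29), an $\MO$-symbol in $S^0_{1,0}(\Ombar \times \I)$, hence is $L^2$-bounded; this translates into the displayed Sobolev bound.

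Granted this mapping property, the bootstrap is immediate. Starting from $f \in L^2(\Omega) = \mathcal{H}^0_\MO(\Omega)$, the identity $f = \lambda^{-1} T_\sigma f$ forces $f \in \mathcal{H}^{-m}_\MO(\Omega)$, and iterating gives $f \in \mathcal{H}^{-Nm}_\MO(\Omega)$ for every $N \in \N$. Because $m < 0$, the exponents $-Nm$ tend to $+\infty$, so $f$ belongs to every space in the $\mathcal{H}^s_\MO$ scale.

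To conclude, I would invoke the identification $\bigcap_{s \in \R} \mathcal{H}^s_\MO(\Omega) = C^\infty_\MO(\Ombar)$. By Plancherel (Proposition 3.10) and the definition $\langle \xi \rangle = (1+|\lambda_\xi|^2)^{1/(2m_0)}$, the condition $f \in \mathcal{H}^{k m_0}_\MO(\Omega)$ is equivalent to $\MO^k f \in L^2(\Omega)$, with boundary conditions respected termwise in the eigenfunction expansion built into Assumption (A). Hence $f \in \bigcap_{k} Dom(\MO^k) = C^\infty_\MO(\Ombar)$. The main obstacle is the very first step, namely the Sobolev continuity of $T_\sigma$ in the non-harmonic setting: it ultimately reduces to the $L^2$-boundedness of the class $S^0_{1,0}(\Ombar \times \I)$ (obtained in this paper through the Gohberg-type analysis of Section 6) together with the composition formula, which itself presupposes $\rho > 0$, while the limiting case $\rho = 0$ would demand a separate Calderón--Vaillancourt-type adaptation to the biorthogonal pair $(u_\xi, v_\xi)_{\xi \in \I}$.
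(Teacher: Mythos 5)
Your proof matches the paper's argument exactly: both rest on the Sobolev mapping property $T_\sigma^l(L^2(\Omega)) \subset \mathcal{H}^{-lm}_\MO(\Omega)$ (the paper simply cites Corollary 14.2 of the reference for this, whereas you re-derive it from the composition formula) and then bootstrap via $f = \lambda^{-l} T_\sigma^l f$ to land in every $\mathcal{H}^s_\MO$, i.e.\ in $C^\infty_\MO(\Ombar)$. Your explicit remark that the argument needs $\lambda \neq 0$, and your caveat about $\rho=0$ in the composition formula, are fair observations about assumptions the paper's proof also makes implicitly, but they do not change the substance.
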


\begin{proof}
Suppose $\sigma (x,\xi) \in S^{m}_{\rho , 0} (\Ombar \times \I)$, $m<0$. Then by Corollary 14.2 in \cite{N-H.AnalysisRT1} we have  $T_\sigma^l (L^2 (\Omega)) \subset \mathcal{H}_\MO^{-lm} (\Omega)$ for all $l \in \N$. This proves that if $f$ is an eigenfunction of $T_\sigma$ with corresponding eigenvalues $\lambda$ then $$f = \frac{1}{\lambda^l} T_\sigma^l f \in \mathcal{H}_\MO^{-lm} (\Omega),$$ for all $l \in \N$, thus $f \in C^\infty_\MO (\Ombar)$.  
\end{proof}
 An analogous result can be proved for some symbols in a positive Hörmander class, but we need ellipticity. First we study solutions of (HE) for quantizable operators.
 
\begin{pro}
Let $T_\sigma$ be a pseudo-differential operator with $\MO$-symbol $\sigma (x , \xi)$ such that $\sigma (\cdot , \xi) \in L^2 (\Omega)$ for every $\xi \in \I$. Suppose that the eigenfunctions of $T_\sigma$ form a Riesz basis in $\mathcal{H}_\MO^{s} (\Omega)$, and that the real parts of the correspondent eigenvalues $(\chi_\xi )_{\xi \in \I}$ are uniformly bounded from below by a constant. Then for an initial condition $f_0 \in \mathcal{H}_\MO^{s} (\Omega)$ the solution $f (t, \cdot) $ in the time $t$ of (HE) stay in $\mathcal{H}_\MO^{s} (\Omega)$ for all $t>0.$ 
\end{pro}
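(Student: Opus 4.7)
The plan is to argue purely by expanding $f_0$ in the Riesz basis of eigenfunctions of $T_\sigma$ and then estimating the coefficients of the formal solution at time $t$. Since the operator (HE) is diagonalised by $\{w_\xi\}_{\xi\in\I}$ and commutes (formally) with the flow $e^{-tT_\sigma}$, the solution is explicitly available and the whole question becomes a coefficient estimate.

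First I would write $f_0 = \sum_{\xi \in \I} f_\xi\, w_\xi$ and use the lower Riesz basis bound in $\mathcal{H}^s_\MO(\Omega)$ to conclude
\[
\sum_{\xi \in \I} |f_\xi|^2 \leq \frac{1}{c}\,\|f_0\|_{\mathcal{H}^s_\MO(\Omega)}^2 < \infty,
\]
where $c>0$ is the lower Riesz constant. Then I would verify directly, by differentiating term by term, that
\[
f(t,x) := \sum_{\xi \in \I} f_\xi\, e^{-\chi_\xi t}\, w_\xi(x)
\]
satisfies $\partial_t f + T_\sigma f = 0$ with $f(0,\cdot)=f_0$, which is the expression already anticipated in Remark 5.16.

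Next comes the core estimate. By hypothesis there exists a constant $M \in \mathbb{R}$ such that $\operatorname{Re}(\chi_\xi) \geq -M$ for every $\xi \in \I$, hence
\[
|e^{-\chi_\xi t}|^2 = e^{-2\operatorname{Re}(\chi_\xi)\, t} \leq e^{2Mt}, \qquad t > 0, \; \xi \in \I.
\]
Applying now the upper Riesz basis bound in $\mathcal{H}^s_\MO(\Omega)$ to the partial sums of the series defining $f(t,\cdot)$, one gets
\[
\|f(t,\cdot)\|_{\mathcal{H}^s_\MO(\Omega)}^2 \leq C \sum_{\xi \in \I} |f_\xi|^2\, |e^{-\chi_\xi t}|^2 \leq C\, e^{2Mt} \sum_{\xi \in \I} |f_\xi|^2 \leq \frac{C}{c}\, e^{2Mt}\,\|f_0\|_{\mathcal{H}^s_\MO(\Omega)}^2,
\]
which shows at once that $f(t,\cdot) \in \mathcal{H}^s_\MO(\Omega)$ for every $t>0$, together with a quantitative growth bound.

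There is no genuine obstacle, since the Riesz basis hypothesis exactly converts the problem into a weighted $\ell^2$-question for the coefficients $(f_\xi)_{\xi \in \I}$. The only bookkeeping point to check is that the partial sums $\sum_{|\xi|\le N} f_\xi e^{-\chi_\xi t} w_\xi$ form a Cauchy sequence in $\mathcal{H}^s_\MO(\Omega)$, so that the series defines a bona-fide element of the Sobolev space and differentiation in $t$ can be passed under the sum; this however follows immediately from the two Riesz inequalities applied to tails $\sum_{N<|\xi|\le N'} |f_\xi|^2 |e^{-\chi_\xi t}|^2 \le e^{2Mt}\sum_{N<|\xi|\le N'} |f_\xi|^2 \to 0$ as $N,N'\to\infty$.
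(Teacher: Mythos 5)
Your proof is correct and follows essentially the same route as the paper: expand in the Riesz basis, use the upper Riesz inequality to reduce to a weighted $\ell^2$ estimate for the coefficients, and bound the factor $|e^{-\chi_\xi t}|^2$ by the uniform lower bound on $\mathfrak{Re}(\chi_\xi)$. You add some helpful bookkeeping (explicit constants, verification of the solution formula, and the Cauchy-tail argument) that the paper leaves implicit, but the substance is identical.
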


\begin{proof}
Just recall that, by definition of Riesz basis, there exists contants $k,K >0$ such that $$k \Big( \sum_{\xi \in \I} |f_\xi|^2  \Big) \leq ||\sum_{\xi \in \I} f_\xi w_\xi (x)||^2_{\mathcal{H}_\MO^{s} (\Omega)} \leq K \Big( \sum_{\xi \in \I} |f_\xi|^2 \Big),$$ 
for every $f \in \mathcal{H}_\MO^{s} (\Omega).$ With this  

\begin{align*}
    ||f(t,\cdot)||_{\mathcal{H}_\MO^{s} (\Omega)}^2&=  ||\sum_{\xi \in \I} e^{- \chi_\xi t} f_\xi w_\xi (x)||^2_{\mathcal{H}_\MO^{s} (\Omega)}\\ &\leq K \Big( \sum_{\xi \in \I} |e^{-\chi_\xi t} |^2 |f_\xi|^2 \Big) \\ &\leq K \sup_{\xi \in \I} |e^{-\mathfrak{Re}(\chi_\xi)t}|^2 \Big( \sum_{\xi \in \I} |f_\xi|^2 \Big) < \infty,
\end{align*}
finishing the proof.
\end{proof}

Now let us see that pseudo-differential operators with $\MO$-symbol in a a Hörmander class are closable. The following is an adaptation of the standard argument.

\begin{pro}
Let $0 \leq \delta < \rho \leq 1$, $m \in \R$. Let $T_\sigma \in Op_\MO (S^m_{\rho, \delta} (\Ombar \times \I))$. Then $T_\sigma : L^2 (\Omega) \to L^2 (\Omega)$ is closable with dense domain containing $C^\infty_\MO (\Ombar)$.
\end{pro}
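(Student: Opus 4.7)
The plan is to apply the standard Hilbert-space closability criterion: a densely defined linear operator on a Hilbert space is closable if and only if its adjoint has dense domain. Accordingly, two things need to be checked for $T_\sigma$ acting on $L^2(\Omega)$: first, that $C^\infty_\MO(\Ombar) \subset Dom(T_\sigma)$ and is dense in $L^2(\Omega)$; and second, that $Dom(T_\sigma^*)$ contains a dense subspace, which will be $C^\infty_{\MO^*}(\Ombar)$.

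The density of $C^\infty_\MO(\Ombar)$ and of $C^\infty_{\MO^*}(\Ombar)$ in $L^2(\Omega)$ is Remark 3.2, inherited directly from the Riesz basis property of $\{u_\xi\}_{\xi \in \I} \subset C^\infty_\MO(\Ombar)$ and $\{v_\xi\}_{\xi \in \I} \subset C^\infty_{\MO^*}(\Ombar)$. The inclusion $C^\infty_\MO(\Ombar) \subset Dom(T_\sigma)$ follows from the $\MO$-quantization formula
\begin{equation*}
T_\sigma f(x) = \sum_{\xi \in \I} \sigma(x,\xi)\, \widehat{f}(\xi)\, u_\xi(x),
\end{equation*}
since the symbol estimate $|\sigma(x,\xi)| \leq C \langle \xi \rangle^m$ together with the Schwartz decay of $\widehat{f}$ for $f \in C^\infty_\MO(\Ombar)$ (Proposition 3.5) ensures $T_\sigma f \in L^2(\Omega)$, and in fact $T_\sigma f \in C^\infty_\MO(\Ombar)$ via the symbolic calculus.

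The essential step is producing the adjoint domain via Lemma 3.31. Choosing the strongly admissible family $\tilde{q}_j(x,y) := \overline{q_j(x,y)}$ (which is strongly $\MO$-admissible by hypothesis), Lemma 3.31 supplies an $\MO^*$-admissible operator $A^* \in Op_{\MO^*}(\tilde{S}^m_{\rho,\delta}(\Ombar \times \I))$ that is the formal adjoint of $T_\sigma$. Since the $\MO^*$-quantization of Theorem 3.20 applies to $A^*$, we have $A^* g \in C^\infty_{\MO^*}(\Ombar) \subset L^2(\Omega)$ for every $g \in C^\infty_{\MO^*}(\Ombar)$, together with the identity
\begin{equation*}
( T_\sigma f, g )_{L^2(\Omega)} = ( f, A^* g )_{L^2(\Omega)}, \qquad f \in C^\infty_\MO(\Ombar),\ g \in C^\infty_{\MO^*}(\Ombar).
\end{equation*}
This places every such $g$ in $Dom(T_\sigma^*)$ with $T_\sigma^* g = A^* g$, so $Dom(T_\sigma^*)$ contains the dense subspace $C^\infty_{\MO^*}(\Ombar)$, and closability follows.

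The only delicate verification is that the formal adjoint identity above is a genuine equality of $L^2$-inner products rather than merely a symbolic-calculus asymptotic. This reduces to a Fubini argument on the double sum $\sum_\xi \sigma(x,\xi)\widehat{f}(\xi) u_\xi(x)\overline{g(x)}$ integrated against $dx$, which is absolutely summable because $\widehat{f}(\xi)$ decays faster than any polynomial while $\sigma(x,\xi)u_\xi(x)$ grows at most polynomially in $\xi$ uniformly in $x$; everything else is an appeal to Lemma 3.31 and Remark 3.2.
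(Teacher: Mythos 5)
Your proposal is correct and rests on exactly the same two ingredients the paper uses: the formal adjoint identity $( T_\sigma f, \psi )_{L^2} = ( f, T_{\sigma^*}\psi )_{L^2}$ furnished by Lemma 3.31, and the density of $C^\infty_{\MO^*}(\Ombar)$ from Remark 3.2. The only difference is packaging: you invoke the abstract criterion ``closable $\iff$ $T_\sigma^*$ densely defined,'' whereas the paper verifies the sequential definition of closability directly ($\phi_k \to 0$, $T_\sigma\phi_k \to f$ implies $f=0$ by pairing against $\psi$), and your extra remark about the Fubini justification of the adjoint identity is a detail the paper quietly takes for granted.
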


\begin{proof}
Let $(\phi_k)_{k \in \N}$ be a sequence in $C^\infty_\MO (\Ombar)$ such that $\phi_k \to 0$ and $T_\sigma \phi_k \to f$ for some $f$ in $L^2 (\Omega)$ as $k \to \infty$. We only need to show that $f=0$. We have $$(T_\sigma \phi_k , \psi) = (\phi_k , T_{\sigma^*} \psi) , \esp \esp \psi \in C^\infty_{\MO^*} (\Ombar).$$
Let $k \to \infty$, then $(f , \psi) =0$ for all $\psi \in C^\infty_{\MO^*} (\Ombar)$. By the density of $C^\infty_{\MO^*} (\Ombar)$ in $L^2 (\Omega)$, it follows that $f=0$.
\end{proof}

Consider $T_\sigma : L^2 (\Omega) \to L^2 ( \Omega)$ with domain containing $C^\infty_\MO (\Ombar)$. Then by the previous result it has a closed extension. Let $T_{\sigma,0}$ be the minimal operator for $T_\sigma$, which is the smallest closed extension of $T_\sigma$. Then the domain $Dom(T_{\sigma,0})$ of $T_{\sigma,0}$ consists of all
functions $g \in L^2 (\Omega)$ for which there exists a sequence $(\phi_k)_{k \in \N}$ in $C^\infty_\MO (\Ombar)$ such
that $\phi_k \to g$ in $L^2 (\Omega)$ and $T_\sigma \phi_k \to f$ for some $f \in L^2 (\Omega)$ as $k \to \infty$.
It can be shown that $f$ does not depend on the choice of $(\phi_k)_{k \in \N}$ and $T_{\sigma , 0} g = f$. We define the linear operator $T_{\sigma , 1}$ on $L^2 (\Omega)$ with domain $Dom(T_{\sigma , 1})$ by the
following. Let $f$ and $g$ be in $L^2 (\Omega)$. Then we say that $g \in Dom(T_{\sigma , 1})$  and  $T_{\sigma , 1} g = f$
if and only if $$(g, T_{\sigma}^* \psi) = (f , \psi), \esp \esp \text{for all} \esp \esp \psi \in C^\infty_{ \MO^*} (\Ombar).$$
It can be proved that $T_{\sigma , 1}$ is a closed linear operator from $L^2 (\Omega)$ into $L^2 (\Omega)$ with
domain $Dom (T_{\sigma , 1})$ containing $C^\infty_\MO (\Ombar)$. In fact, $C^\infty_{\MO^*} (\Ombar)$ is contained in the domain $Dom(T_{\sigma , 1}^t)$ of the transpose $T_{\sigma , 1}^t$ of $T_{\sigma , 1}$. Furthermore, $T_{\sigma , 1} g = T_\sigma g$ for all $g$ in
$Dom (T_{\sigma , 1})$.

It is easy to see that $T_{\sigma , 1}$ is an extension of $T_{\sigma , 0}$. In fact $T_{\sigma , 1}$ is the largest
closed extension of $T_\sigma$ in the sense that if $B$ is any closed extension of $T_\sigma$ such that $C^\infty_{\MO^*} (\Ombar) \subseteq Dom(B^t)$, then $T_{\sigma , 1}$ is an extension of $B$. Such $T_{\sigma , 1}$ is called the maximal operator of $T_\sigma$. 

\noindent Now we recall the definition of ellipticity.

\begin{defi}\normalfont We say that $\sigma_A \in S^m_{\rho , 0 } (\Ombar \times \I)$ is elliptic if there exist constants $C_0 >0$ and $N_0 \in \N$ such that $$|\sigma_A  (x, \xi)| \geq C_0 \langle \xi \rangle^m,$$ for all $(x, \xi) \in \Ombar \times \I$ for which $\langle \xi \rangle \geq N_0$; this is equivalent to assuming that there exists $\sigma_B \in S^{-m}_{\rho , 0 } (\Ombar \times \I)$ such that $I - AB $, $I - BA $ are in $Op_\MO (S^{- \infty} (\Ombar \times \I))$.
\end{defi}

The following theorem is an analogue of Agmon-Douglis-Nirenberg in \cite{agmon}.

\begin{pro}
Let $T_\sigma$ be a pseudo-differential operator with $\MO$-symbol $\sigma \in S^m_{\rho , 0 } (\Ombar \times \I)$, $m >0$. Assume that $\sigma$ elliptic. Then there exist positive constants $C$ and $D>0$ such that $$C ||g||_{\mathcal{H}_\MO^{ m } (\Omega)} \leq ||T_\sigma g||_{ L^2 (\Omega)} + ||g||_{L^2 (\Omega)} \leq D ||g||_{\mathcal{H}_\MO^{ m } (\Omega)}.$$
\end{pro}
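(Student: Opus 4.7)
The upper inequality is the easy half. Since $\langle \xi \rangle \geq 1$, the trivial embedding $\mathcal{H}^m_\MO(\Omega) \hookrightarrow L^2(\Omega)$ gives $\|g\|_{L^2(\Omega)} \leq \|g\|_{\mathcal{H}^m_\MO(\Omega)}$. For the $T_\sigma g$ term, I would invoke the Sobolev mapping property for the Hörmander classes in the non-harmonic setting: any operator in $Op_\MO(S^m_{\rho,0}(\ol\Omega\times\I))$ maps $\mathcal{H}^m_\MO(\Omega)$ continuously into $L^2(\Omega)$. This follows from the $\mathcal{H}_\MO^s\to\mathcal{H}_\MO^{s-m}$ boundedness of symbols of order $m$ (the same result underlying Corollary 14.2 of \cite{N-H.AnalysisRT1} cited in Proposition 5.15), applied with $s=m$. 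Adding the two estimates yields the upper bound with $D=\|T_\sigma\|_{\mathcal{H}^m_\MO\to L^2}+1$.

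For the lower inequality the key tool is ellipticity. By Definition 5.17 there exists $\sigma_B\in S^{-m}_{\rho,0}(\ol\Omega\times\I)$ such that the remainder $R:=I-T_{\sigma_B}T_\sigma$ lies in $Op_\MO(S^{-\infty}(\ol\Omega\times\I))$; that is, $T_{\sigma_B}$ is a left parametrix for $T_\sigma$. I would then write the identity
\begin{equation*}
g=T_{\sigma_B}T_\sigma g+R g,
\end{equation*}
and take the $\mathcal{H}^m_\MO$-norm of both sides, applying the triangle inequality:
\begin{equation*}
\|g\|_{\mathcal{H}^m_\MO(\Omega)}\leq \|T_{\sigma_B}T_\sigma g\|_{\mathcal{H}^m_\MO(\Omega)}+\|R g\|_{\mathcal{H}^m_\MO(\Omega)}.
\end{equation*}
Using again the Sobolev mapping property: since $\sigma_B\in S^{-m}_{\rho,0}$, the operator $T_{\sigma_B}$ sends $L^2(\Omega)$ continuously into $\mathcal{H}^m_\MO(\Omega)$, so the first term is bounded by a constant times $\|T_\sigma g\|_{L^2(\Omega)}$. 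The remainder $R\in Op_\MO(S^{-\infty})$ maps $L^2(\Omega)$ continuously into $\mathcal{H}^s_\MO(\Omega)$ for every $s$, in particular for $s=m$, so the second term is bounded by a constant times $\|g\|_{L^2(\Omega)}$. Combining and dividing by the sum of the two resulting constants gives the asserted lower bound with constant $C>0$.

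The main obstacle, and the only nontrivial input, is the Sobolev continuity statement for operators in the Hörmander classes adapted to $\MO$: namely, that $T_\tau:\mathcal{H}^s_\MO(\Omega)\to \mathcal{H}^{s-k}_\MO(\Omega)$ continuously whenever $\tau\in S^k_{\rho,0}(\ol\Omega\times\I)$. This is the non-harmonic counterpart of the classical Sobolev boundedness and is available in \cite{N-H.AnalysisRT1} from the composition formula (Lemma 3.30) together with $L^2$-boundedness of zero-order symbols: indeed, if $\Lambda^s$ denotes an $\MO$-Fourier multiplier with symbol $\langle\xi\rangle^s$, then $\Lambda^{s-k}T_\tau\Lambda^{-s}$ has symbol in $S^0_{\rho,0}$ by the composition formula, and the $L^2$-boundedness of such operators transfers the mapping property to the Sobolev scale. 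Once this is granted, the two estimates above are immediate and the norms are equivalent.
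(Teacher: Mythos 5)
Your proof is correct and follows essentially the same approach as the paper, which simply cites \cite[Theorem 14.3]{N-H.AnalysisRT1} for the lower (elliptic) estimate and \cite[Corollary 14.2]{N-H.AnalysisRT1} for the Sobolev boundedness giving the upper estimate; your parametrix argument $g = T_{\sigma_B}T_\sigma g + Rg$ is exactly the standard mechanism underlying that referenced theorem, and your conjugation-by-$\Lambda^s$ derivation of the Sobolev mapping property is the usual way the corollary is established.
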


\begin{proof}
The inequality $$C ||g||_{\mathcal{H}_\MO^{m } (\Omega)} \leq ||T_\sigma g||_{L^2 (\Omega)} + ||g||_{L^2 (\Omega)}$$ is given by \cite[Theorem 14.3]{N-H.AnalysisRT1}. The inequality $$||T_\sigma g||_{L^2 (\Omega)} + ||g||_{L^2 (\Omega)} \leq D ||g||_{\mathcal{H}_\MO^{ m } (\Omega)},$$ is given by the boundedness of $T_\sigma : \mathcal{H}_\MO^{ m } (\Omega) \to \mathcal{H}_\MO^{0} (\Omega) = L^2 (\Omega)$, see \cite[Corollary 14.2]{N-H.AnalysisRT1}.
\end{proof}

\begin{pro}
Let $T_\sigma$ be a pseudo-differential operator with $\MO$-symbol $\sigma \in S^m_{\rho , 0 } (\Ombar \times \I)$ $m >0$, and assume it is elliptic. Then $Dom(T_{\sigma , 0}) = \mathcal{H}_\MO^{ m } (\Omega)$.
\end{pro}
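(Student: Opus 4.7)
The plan is to extract the result directly from the two-sided estimate in Proposition 5.20, exploiting completeness of $\mathcal{H}^m_\MO(\Omega)$ and the density of finite Fourier sums (hence of $C^\infty_\MO(\Ombar)$) in $\mathcal{H}^m_\MO(\Omega)$. We prove the two inclusions separately.

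First, to show $\mathcal{H}^m_\MO(\Omega) \subseteq \mathrm{Dom}(T_{\sigma,0})$, take $g \in \mathcal{H}^m_\MO(\Omega)$ and consider the truncated Fourier series $\phi_k := \sum_{|\xi| \leq k} \widehat{g}(\xi)\, u_\xi$. Each $\phi_k$ is a finite linear combination of eigenfunctions of $\MO$, so $\phi_k \in C^\infty_\MO(\Ombar)$. From the definition of $\mathcal{H}^m_\MO(\Omega)$ via weighted Fourier coefficients, $\phi_k \to g$ in $\mathcal{H}^m_\MO(\Omega)$ and in particular in $L^2(\Omega)$. Applying the upper estimate from Proposition 5.20 to the differences $\phi_k-\phi_j$, we get
\[
\|T_\sigma \phi_k - T_\sigma \phi_j\|_{L^2(\Omega)} \leq D\,\|\phi_k-\phi_j\|_{\mathcal{H}^m_\MO(\Omega)} \to 0,
\]
so $(T_\sigma \phi_k)$ is Cauchy in $L^2(\Omega)$ and converges to some $f \in L^2(\Omega)$. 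By the very definition of $T_{\sigma,0}$, this shows $g \in \mathrm{Dom}(T_{\sigma,0})$ with $T_{\sigma,0} g = f$.

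For the reverse inclusion $\mathrm{Dom}(T_{\sigma,0}) \subseteq \mathcal{H}^m_\MO(\Omega)$, let $g \in \mathrm{Dom}(T_{\sigma,0})$ and pick a sequence $(\phi_k) \subset C^\infty_\MO(\Ombar)$ with $\phi_k \to g$ in $L^2(\Omega)$ and $T_\sigma \phi_k \to f$ in $L^2(\Omega)$. Apply the lower estimate from Proposition 5.20 to $\phi_k-\phi_j$:
\[
C\,\|\phi_k-\phi_j\|_{\mathcal{H}^m_\MO(\Omega)} \leq \|T_\sigma(\phi_k-\phi_j)\|_{L^2(\Omega)} + \|\phi_k-\phi_j\|_{L^2(\Omega)} \longrightarrow 0.
\]
Thus $(\phi_k)$ is Cauchy in $\mathcal{H}^m_\MO(\Omega)$, which is a Hilbert space and hence complete, so $\phi_k \to h$ in $\mathcal{H}^m_\MO(\Omega)$ for some $h \in \mathcal{H}^m_\MO(\Omega)$. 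Since $\mathcal{H}^m_\MO(\Omega) \hookrightarrow L^2(\Omega)$ continuously (the weights $\langle \xi \rangle^{2m}$ are bounded below by a positive constant), convergence in $\mathcal{H}^m_\MO(\Omega)$ implies convergence in $L^2(\Omega)$; combined with $\phi_k \to g$ in $L^2(\Omega)$, uniqueness of limits gives $g=h \in \mathcal{H}^m_\MO(\Omega)$. This completes the proof.

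The main technical point is simply the two-sided a~priori estimate in Proposition 5.20, which in turn uses ellipticity in an essential way; once that estimate is in hand, the domain identification is a soft functional-analytic argument and the only minor thing to check is that truncated Fourier sums converge in $\mathcal{H}^m_\MO(\Omega)$, which is immediate from the very definition of $\|\cdot\|_{\mathcal{H}^m_\MO(\Omega)}$ as a weighted Fourier-side norm.
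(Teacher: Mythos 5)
Your proposal is correct and follows essentially the same approach as the paper: both directions hinge on the two-sided a priori estimate of Proposition 5.20, density of $C^\infty_\MO(\Ombar)$ in $\mathcal{H}^m_\MO(\Omega)$, and completeness of $\mathcal{H}^m_\MO(\Omega)$. The only cosmetic difference is that you realize the density explicitly via truncated Fourier sums, whereas the paper simply invokes density abstractly.
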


\begin{proof}
Let $g \in\mathcal{H}_\MO^{ m } (\Omega)$. Then by using the density of $C^\infty_\MO (\Ombar)$ in $\mathcal{H}_\MO^{ m } (\Omega)$, there exists a sequence $(\phi_k)_{k \in \N}$ in $C^\infty_\MO (\Ombar)$ such that $\phi_k \to g$ in $\mathcal{H}_\MO^{ m } (\Omega)$ and therefore in $L^2 (\Omega)$ as $k \to \infty$. By Proposition 5.20, $(\phi_k)_{k \in \N}$ and $(T_\sigma \phi_k)_{k \in \N}$ are Cauchy sequences in $L^2 (\Omega)$. Therefore $\phi_k \to g$ and $T_\sigma \phi_k \to f$ for some $f \in L^2 (\Omega)$ as $k \to \infty$. This implies that $g \in Dom(T_{\sigma , 0})$ and $T_{\sigma , 0} g = f.$ Now assume that $g \in Dom(T_{\sigma , 0})$. Then there exists a sequence $(\phi_k)_{k \in \N}$ in $C^\infty_\MO (\Ombar)$ such that $\phi_k \to g$ in $L^2 (\Omega)$ and $T_\sigma \phi_k \to f$, for
some $f \in L^2 (\Omega)$. So, by Proposition 5.20, $(\phi_k)_{k \in \N}$ is a Cauchy sequence in $\mathcal{H}_\MO^{ m } (\Omega)$. Since $\mathcal{H}_\MO^{ m } (\Omega)$ is complete, there exists $h \in \mathcal{H}_\MO^{ m } (\Omega)$ such that $\phi_k \to h$ in $\mathcal{H}_\MO^{ m } (\Omega)$. This implies $\phi_k \to h$ in $L^2 (\Omega)$ which implies that $h = g \in \mathcal{H}_\MO^{ m } (\Omega) .$
\end{proof}

The following theorem shows that the closed extension of an elliptic pseudo-differential operator on $L^2 (\Omega)$  with $\MO$-symbol $\sigma \in S^m_{\rho , 0 } (\Ombar \times \I)$, $m >0,$ is unique, and moreover, by Proposition 5.21 its domain is $\mathcal{H}_\MO^{ m } (\Omega)$.

\begin{teo}
Let $T_\sigma$ be a pseudo-differential operator with $\MO$-symbol $\sigma \in S^m_{\rho , 0 } (\Ombar \times \I)$, $m >0$, and assume it is elliptic. Then $T_{\sigma , 0} = T_{\sigma , 1}$.
\end{teo}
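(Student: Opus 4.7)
The strategy is to reduce the identity $T_{\sigma,0} = T_{\sigma,1}$ to a pure containment of domains. Since the text already records that $T_{\sigma,1}$ is an extension of $T_{\sigma,0}$, it suffices to prove the reverse inclusion $Dom(T_{\sigma,1}) \subseteq Dom(T_{\sigma,0})$; by Proposition 5.21 this is the same as showing $Dom(T_{\sigma,1}) \subseteq \mathcal{H}^m_\MO(\Omega)$, and the two operators will then automatically agree on their common domain.

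To produce the needed regularity I would exploit ellipticity via a left parametrix. By Definition 5.22 there exists an $\MO$-symbol $\tau \in S^{-m}_{\rho,0}(\Ombar \times \I)$ such that
\[
T_\tau T_\sigma = I + R, \qquad R \in Op_\MO(S^{-\infty}(\Ombar \times \I)).
\]
Fix $g \in Dom(T_{\sigma,1})$ with $T_{\sigma,1} g = f \in L^2(\Omega)$, and let $\phi \in C^\infty_{\MO^*}(\Ombar)$ be arbitrary. The adjoint formula (Lemma 3.29) places $T_\tau^* \in Op_{\MO^*}(\Tilde{S}^{-m}_{\rho,0})$, so $T_\tau^* \phi$ lies in $C^\infty_{\MO^*}(\Ombar)$ and is a legitimate test function in the definition of the maximal operator. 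Expanding $(g, T_\sigma^* T_\tau^* \phi)$ in two different ways — on one hand as $(f, T_\tau^* \phi) = (T_\tau f, \phi)$ using the maximal-operator characterization of $g$, on the other as $(g,\phi) + (R g, \phi)$ using $T_\sigma^* T_\tau^* = (T_\tau T_\sigma)^* = I + R^*$ (with the pseudo-differential composition/adjoint calculus of Lemmas 3.28 and 3.29 justifying this on test functions) — I obtain the distributional identity
\[
g \;=\; T_\tau f \;-\; R g.
\]

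From here the regularity conclusion follows by standard mapping properties recalled in Section 3. Since $T_\tau$ has $\MO$-symbol in $S^{-m}_{\rho,0}$, it is bounded $L^2(\Omega) = \mathcal{H}^0_\MO(\Omega) \to \mathcal{H}^m_\MO(\Omega)$ by Corollary 14.2 of \cite{N-H.AnalysisRT1}, hence $T_\tau f \in \mathcal{H}^m_\MO(\Omega)$. Likewise $R \in Op_\MO(S^{-\infty})$ is smoothing, so $Rg \in \bigcap_{s} \mathcal{H}^s_\MO(\Omega) = C^\infty_\MO(\Ombar) \subseteq \mathcal{H}^m_\MO(\Omega)$. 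Adding, $g \in \mathcal{H}^m_\MO(\Omega) = Dom(T_{\sigma,0})$, which is exactly what was needed.

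The main obstacle lies in making the duality step rigorous: one must be careful that $T_\tau^* \phi$ genuinely lives in $C^\infty_{\MO^*}(\Ombar)$ so that the identity defining $T_{\sigma,1}$ can be applied with it, and that the composition $T_\sigma^* T_\tau^* = (T_\tau T_\sigma)^*$ really holds when tested against $C^\infty_{\MO^*}(\Ombar)$ (as opposed to merely as an asymptotic expansion of symbols). Both facts are built into the symbolic calculus of Section 3, but they are the technical crux of the argument; once they are in place, the rest is bookkeeping and the continuous mapping properties of $T_\tau$ and $R$ between Sobolev spaces.
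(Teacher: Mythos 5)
Your proof follows the same strategy as the paper's: reduce to showing $Dom(T_{\sigma,1}) \subseteq \mathcal{H}^m_\MO(\Omega)$ (via Proposition 5.21 and the fact that $T_{\sigma,1}$ extends $T_{\sigma,0}$), then use a left parametrix $T_\tau$ with $T_\tau T_\sigma = I + R$, $R$ smoothing, to write $g = T_\tau T_{\sigma,1}g - Rg$ and invoke the Sobolev mapping properties of $T_\tau$ and $R$. The paper simply asserts the identity $g = T_\tau T_\sigma g - Rg$ for $g$ in the maximal domain; your duality argument (pairing against $T_\tau^*\phi$ for $\phi \in C^\infty_{\MO^*}(\Ombar)$ and using $T_\sigma^* T_\tau^* = I + R^*$) is a genuine and correct justification of that step, which the paper leaves implicit, so your write-up is if anything tighter than the original.
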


\begin{proof}
Since $T_{\sigma , 1}$ is a closed extension of $T_{\sigma , 0}$, by Proposition 5.21 it is enough to
show that $Dom(T_{\sigma ,1 }) \subseteq \mathcal{H}_\MO^{ m } (\Omega)$. Let $g \in Dom(T_{\sigma ,1 })$. By ellipticity of $\sigma$, there exists $\tau \in S^{-m}_{\rho , 0 } (\Ombar \times \I)$ such that $$g = T_\tau T_\sigma g - R g , $$
where $R \in Op_\MO (S^{- \infty} (\Ombar \times \I))$ is an infinitely smoothing operator. Since $T_\sigma g = T_{\sigma , 1} g \in L^2 (\Omega)$, by \cite[Corollary 14.2]{N-H.AnalysisRT1}, it follows that $g \in \mathcal{H}_\MO^{ m } (\Omega)$, which completes the proof. 
\end{proof}

As an immediate consequence of this theorem we get:

\begin{coro}
 Let $T_\sigma$ be a pseudo-differential operator with $\MO$-symbol $\sigma \in S^m_{\rho , 0 } (\Ombar \times \I)$, $m >0$, and assume it is elliptic. Then if $T_\sigma$ has an eigenfunction, it is in $C^\infty_\MO (\Ombar)$.
\end{coro}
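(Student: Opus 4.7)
The plan is to mimic the argument of Proposition 5.18, replacing the direct smoothing provided by a negative-order symbol with a parametrix furnished by ellipticity. Let $f \in L^2(\Omega)$ be an eigenfunction of $T_\sigma$ with eigenvalue $\lambda$, so that $T_\sigma f = \lambda f$. The first step is to observe that $f$ lies in the domain of the maximal operator $T_{\sigma,1}$ (this is what it means for $f$ to be an eigenfunction in $L^2(\Omega)$), and by Theorem 5.23 we have $Dom(T_{\sigma,1}) = Dom(T_{\sigma,0}) = \mathcal{H}_\MO^{m}(\Omega)$; in particular, $f \in \mathcal{H}_\MO^{m}(\Omega)$ is already a baseline gain in regularity.

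Next I invoke ellipticity via Definition 5.19: there exists $\tau \in S^{-m}_{\rho,0}(\Ombar \times \I)$ such that $R := I - T_\tau T_\sigma$ belongs to $Op_\MO(S^{-\infty}(\Ombar \times \I))$. Since $R$ has symbol of order $-\infty$, it maps $L^2(\Omega)$ continuously into $\mathcal{H}_\MO^{k}(\Omega)$ for every $k \in \R$, and hence into $C^\infty_\MO(\Ombar)$. Applying $T_\tau$ to the eigenvalue equation and rearranging gives the key identity
\[
f \;=\; T_\tau T_\sigma f + Rf \;=\; \lambda\, T_\tau f + Rf.
\]

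The bootstrap step uses the Sobolev mapping properties recorded in \cite[Corollary 14.2]{N-H.AnalysisRT1}: the operator $T_\tau$ maps $\mathcal{H}_\MO^{s}(\Omega)$ continuously into $\mathcal{H}_\MO^{s+m}(\Omega)$ for every $s \in \R$. Starting from the known regularity $f \in \mathcal{H}_\MO^{m}(\Omega)$, the identity above yields $\lambda T_\tau f \in \mathcal{H}_\MO^{2m}(\Omega)$ and $Rf \in C^\infty_\MO(\Ombar)$, hence $f \in \mathcal{H}_\MO^{2m}(\Omega)$. An induction on $k$ then shows $f \in \mathcal{H}_\MO^{km}(\Omega)$ for every $k \in \N$, and since $m > 0$ we conclude
\[
f \;\in\; \bigcap_{k \in \N} \mathcal{H}_\MO^{km}(\Omega) \;=\; C^\infty_\MO(\Ombar).
\]

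No real obstacle is expected: the heavy lifting has already been done in Theorem 5.23 (which identifies the domain of any closed extension with $\mathcal{H}_\MO^{m}(\Omega)$) and in the symbolic calculus that provides the left parametrix. The only point worth being a little careful about is that the bootstrap must start from the fact that $f$ is \emph{a priori} in $\mathcal{H}_\MO^{m}(\Omega)$ rather than merely $L^2(\Omega)$; without Theorem 5.23 one would have to justify the action of $T_\sigma$ on $f$ separately. The case $\lambda = 0$ requires no modification, since the identity $f = \lambda T_\tau f + Rf = Rf$ immediately gives $f \in C^\infty_\MO(\Ombar)$.
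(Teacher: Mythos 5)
Your proof is correct and follows essentially the same approach as the paper's one-line proof, which observes that $T_\sigma f=\lambda f$ forces $f\in Dom(T_\sigma^l)\subseteq Dom(T_{\sigma,1}^l)=\mathcal{H}_\MO^{lm}(\Omega)$ for every $l$. Where the paper invokes $Dom(T_{\sigma,1}^l)=\mathcal{H}_\MO^{lm}(\Omega)$ without comment (this identity is itself an elliptic-regularity bootstrap), you unpack the same content explicitly via the parametrix identity $f=\lambda T_\tau f+Rf$, which is a slightly more careful presentation of the same argument and even correctly flags the a-priori step $f\in\mathcal{H}_\MO^m(\Omega)$ needed to legitimize the parametrix identity on $f$.
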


\begin{proof}
We just have to note that $T_\sigma f = \lambda f$ implies $f \in Dom (T_\sigma^l) \subseteq Dom(T_{\sigma , 1}^l) = \mathcal{H}_\MO^{ lm } (\Omega)$ for all $l \in \N$.
\end{proof}

And with this corollary we can provide a sufficient condition for smoothness of solutions to the equation (HE).

\begin{teo}
Let $T_\sigma$ be a pseudo-differential operator with symbol $\sigma \in S^m_{\rho , 0 } (\Ombar \times \I)$, $m >0$, and assume it is elliptic. Suppose that eigenfunctions $(w_\xi)_{\xi \in \I}$ (without loss of generality indexed by $\I$ and normalized) with corresponding eigenvalues $(\chi_\xi)_{\xi \in \I}$  form a Schauder basis of $L^2 (\Omega)$. Suppose that the real parts of eigenvalues of $T_\sigma$ grow at least as $\langle \xi \rangle^{\varepsilon}$ for some $\varepsilon >0$, and that $|\chi_\xi| \leq C \langle \xi \rangle^{\mu_1}$, for some $\mu_1 > 0$. Then the solution $f(t,x)$ in the time $t$ to the equation (HE) is in $C^\infty_\MO (\bar\Omega)$ for all $t>0$.
\end{teo}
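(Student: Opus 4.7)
The plan is to establish that, for every fixed $t>0$, the series representation
\[
f(t,x) = \sum_{\xi \in \I} f_\xi \, e^{-\chi_\xi t}\, w_\xi(x)
\]
given in Remark 5.15 converges absolutely in every Sobolev space $\mathcal{H}^s_\MO(\Omega)$. Since $\text{Dom}(\MO^l) = \mathcal{H}^{lm}_\MO(\Omega)$ and $C^\infty_\MO(\Ombar) = \bigcap_{l \in \N} \text{Dom}(\MO^l) = \bigcap_{s > 0} \mathcal{H}^s_\MO(\Omega)$, this places $f(t,\cdot)$ in $C^\infty_\MO(\Ombar)$; the absolute convergence also justifies termwise application of $\partial_t$ and $T_\sigma$, showing $f$ satisfies (HE) classically for $t>0$.

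The argument rests on three quantitative ingredients. First, by Corollary 5.24 each eigenfunction $w_\xi$ lies in $C^\infty_\MO(\Ombar)$; to get a quantitative bound I iterate the elliptic a priori estimate of Proposition 5.20 via the parametrix identity $BT_\sigma = I + R$ with $B \in Op_\MO(S^{-m}_{\rho,0}(\Ombar \times \I))$ and $R \in Op_\MO(S^{-\infty}(\Ombar \times \I))$. Evaluating on $w_\xi$, the identity reads $w_\xi = \chi_\xi Bw_\xi - Rw_\xi$, and using the mapping properties $B:\mathcal{H}^{s-m}_\MO \to \mathcal{H}^s_\MO$ and $R: L^2 \to \mathcal{H}^s_\MO$ (Corollary 14.2 of \cite{N-H.AnalysisRT1}) together with $\|w_\xi\|_{L^2}=1$, induction on $l$ yields
\[
\|w_\xi\|_{\mathcal{H}^{lm}_\MO(\Omega)} \,\lesssim_l\, (1+|\chi_\xi|)^l \,\lesssim\, \langle \xi \rangle^{l \mu_1}.
\]
Second, because $(w_\xi)$ is a Schauder basis of $L^2(\Omega)$ with $\|w_\xi\|_{L^2}=1$, the biorthogonal coordinate functionals are uniformly bounded by the basis constant, giving $|f_\xi| \leq K \|f_0\|_{L^2(\Omega)}$ for all $\xi$. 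Third, the hypothesis $\mathrm{Re}(\chi_\xi) \geq c \langle \xi \rangle^{\varepsilon}$ for large $\langle\xi\rangle$ yields the super-polynomial decay $|e^{-\chi_\xi t}| \leq e^{-c \langle \xi \rangle^\varepsilon t}$.

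Combining these for fixed $s>0$ and any integer $l$ with $lm \geq s$,
\[
\sum_{\xi \in \I} |f_\xi|\, |e^{-\chi_\xi t}|\, \|w_\xi\|_{\mathcal{H}^{lm}_\MO(\Omega)} \,\lesssim\, \|f_0\|_{L^2(\Omega)} \sum_{\xi \in \I} e^{-c \langle \xi \rangle^\varepsilon t}\, \langle \xi \rangle^{l \mu_1}.
\]
Splitting $e^{-c\langle\xi\rangle^\varepsilon t} = e^{-c\langle\xi\rangle^\varepsilon t/2}\cdot e^{-c\langle\xi\rangle^\varepsilon t/2}$, the first factor is dominated by $C_{t,N}\langle\xi\rangle^{-N}$ for arbitrarily large $N$, while the product of the second factor with $\langle\xi\rangle^{l\mu_1}$ is uniformly bounded in $\xi$. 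Taking $N \geq s_0 + l\mu_1$ and invoking Assumption (B) makes the sum finite, so the series converges absolutely in $\mathcal{H}^s_\MO(\Omega)$ for every $s>0$, completing the proof.

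The main obstacle is the first step: extracting a polynomial bound on $\|w_\xi\|_{\mathcal{H}^{lm}_\MO}$ requires an elliptic regularity estimate at every Sobolev level, which is not stated explicitly in the excerpt but follows from repeated application of the parametrix identity. A secondary point is that the Schauder basis is not assumed unconditional, so one cannot directly multiply each term by $e^{-\chi_\xi t}$ and expect $L^2$-convergence; this is bypassed by working in a smaller space where the series converges absolutely, which then implies convergence (to the same function $f(t,\cdot)$) in the ambient $L^2$ as well.
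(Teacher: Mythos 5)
Your proof is correct and follows essentially the same route as the paper: both arguments hinge on the polynomial bound $\|w_\xi\|_{\mathcal{H}^{lm}_\MO(\Omega)} \lesssim \langle\xi\rangle^{l\mu_1}$ obtained from ellipticity, the uniform bound on the coefficients $f_\xi$ coming from the Schauder basis, and absolute convergence of the Sobolev-norm series forced by the super-polynomial decay $e^{-\mathrm{Re}(\chi_\xi)t} \lesssim e^{-c\langle\xi\rangle^\varepsilon t}$. The only cosmetic difference is that you iterate the parametrix identity $BT_\sigma = I - R'$ to get the eigenfunction bound, whereas the paper invokes Proposition 5.20 directly with $T_\sigma^l$ in place of $T_\sigma$ (implicitly using the composition calculus to see $T_\sigma^l$ is elliptic of order $lm$), so your write-up actually fills in a step the paper leaves terse.
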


\begin{proof}
As we said before, the solution of (HE) has the form $$f_t = \sum_{\xi \in \I} e^{- \chi_\xi t} f_\xi w_\xi (x),$$ where $f_\xi$ is the $\xi$-component of $f$ with respect to the basis $(w_\xi)_{\xi \in \I}$. Let us show that $f(t,x) \in \bigcap_{l \in \N} \mathcal{H}_\MO^{ lm } (\Omega)$. By Proposition 5.20 we have $$||w_\xi||_{\mathcal{H}_\MO^{ lm } (\Omega)} \leq C_1 (||T_\sigma^{l} w_\xi||_{L^2 (\Omega)} + ||w_\xi||_{L^2 (\Omega)}) \leq C \langle \xi \rangle^{l \mu_1}, $$ thus $$||f_t||_{\mathcal{H}_\MO^{ lm } (\Omega)} \leq \sum_{\xi \in \I} e^{- \mathfrak{Re}(\chi_\xi) t} |f_\xi| \cdot ||w_\xi||_{\mathcal{H}_\MO^{ lm } (\Omega)} \leq C \sum_{\xi \in \I} e^{- \mathfrak{Re}(\chi_\xi)t} \langle \xi \rangle^{l \mu_1} < \infty,$$  which completes the proof.
\end{proof}

\begin{coro}
 Let $T_\sigma$ be a self-adjoint elliptic $\MO$-pseudo-differential operator. Suppose that the real parts of eigenvalues $(\chi_\xi)_{\xi \in \I}$ of $T_\sigma$ grow at least linearly, and that $|\chi_\xi| \leq C \langle \xi \rangle^{\mu_1}$, for some $\mu_1 > 0$. Then the solution $f(t,x)$ in the time $t$ to the equation (HE) is in $C^\infty_\MO (\Ombar)$ for all $t>0$.
\end{coro}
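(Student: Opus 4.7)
The plan is to derive the corollary as an immediate consequence of Theorem 5.24 by verifying its two nontrivial hypotheses in the self-adjoint setting, namely that the eigenfunctions form a Schauder basis of $L^2(\Omega)$ and that the eigenvalues satisfy the required lower-growth bound $\mathfrak{Re}(\chi_\xi) \gtrsim \langle \xi \rangle^\varepsilon$ for some $\varepsilon > 0$.

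First, I would observe that the hypothesis ``the eigenvalues grow at least linearly'' already forces the order $m$ of $\sigma$ to be strictly positive; otherwise, if $m \leq 0$, Theorem 4.2 (or the boundedness/compactness statements in \cite{N-H.AnalysisRT1}) would prevent the eigenvalues of $T_\sigma$ from escaping to infinity, contradicting linear growth. Hence Theorem 5.24 applies in principle provided the basis and growth hypotheses hold.

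Next, I would justify the Schauder basis property. By Proposition 5.22, since $\sigma \in S^m_{\rho,0}(\Ombar \times \I)$ is elliptic with $m > 0$, the unique closed extension $T_{\sigma,0} = T_{\sigma,1}$ has domain $\mathcal{H}^m_\MO(\Omega)$. Since $\Ombar$ is compact, the embedding $\mathcal{H}^m_\MO(\Omega) \hookrightarrow L^2(\Omega)$ is compact (this follows from Assumption~(A), (B), and the definition of $\mathcal{H}^s_\MO$ via decay of Fourier coefficients), so the resolvent of $T_\sigma$ is a compact operator. Being self-adjoint with compact resolvent, the spectral theorem provides an orthonormal basis $(w_\xi)_{\xi \in \I}$ of $L^2(\Omega)$ consisting of eigenfunctions of $T_\sigma$, which is in particular a Schauder basis. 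Finally, the growth condition is immediate: ``at least linearly'' means $|\chi_\xi| \geq c \langle \xi \rangle$ for large $|\xi|$, so the hypothesis $\mathfrak{Re}(\chi_\xi) \geq c' \langle \xi \rangle^\varepsilon$ of Theorem 5.24 holds with $\varepsilon = 1$, while the upper bound $|\chi_\xi| \leq C \langle \xi \rangle^{\mu_1}$ is given.

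With these verifications in place, Theorem 5.24 yields $f(t,\cdot) \in C^\infty_\MO(\Ombar)$ for every $t > 0$, which is exactly the claim. The only delicate step is the spectral decomposition: one must be careful that the self-adjointness is understood with respect to the maximal/minimal domain $\mathcal{H}^m_\MO(\Omega)$ produced in Section~5.4, and that the orthonormal basis obtained this way indeed coincides with the system $(w_\xi)_{\xi \in \I}$ used in Theorem 5.24. This identification, together with the compactness of the embedding $\mathcal{H}^m_\MO(\Omega) \hookrightarrow L^2(\Omega)$, is the main technical point; everything else is a direct invocation of the already-established machinery.
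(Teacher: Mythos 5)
Your proof is correct and fills in exactly the reasoning the paper leaves implicit: the paper states the corollary without proof as an ``immediate consequence'' of Theorem~5.24, and the only genuinely new hypothesis to verify is that the eigenfunctions form a Schauder basis, which you obtain via the standard route (ellipticity and $m>0$ give $Dom(T_{\sigma,0})=\mathcal{H}^m_\MO(\Omega)$ by Theorem 5.22 and Proposition 5.21, the embedding $\mathcal{H}^m_\MO(\Omega)\hookrightarrow L^2(\Omega)$ is compact because the diagonal multiplier $\langle\xi\rangle^{-m}\to 0$, hence the resolvent is compact, and self-adjointness then yields an orthonormal eigenbasis, which in particular is Schauder). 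Your observation that linear growth of the eigenvalues forces $m>0$ is a useful point that the paper does not spell out. One small infelicity: you write the growth hypothesis as $|\chi_\xi|\geq c\langle\xi\rangle$ before concluding $\mathfrak{Re}(\chi_\xi)\geq c'\langle\xi\rangle$, whereas the corollary's hypothesis already concerns the real parts directly (and for a self-adjoint operator $\chi_\xi\in\R$ so these coincide); this is cosmetic rather than a gap.
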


\begin{rem}
Theorem 5.24 provides a sufficient condition to ensure that solutions to the equation (HE) are in $C^\infty_\MO (\Ombar)$ for all $t>0$. We want to remark that, in many cases, this implies the smothness of solutions, since the model operator is a diferential operator so $C^\infty_\MO (\Ombar) \subseteq C^\infty (\Omega)$ could be a natural assumption. We will exploit this fact in the next subsection. 
\end{rem}

\subsection{Stability of solutions}
In this subsection we use the scheme of the proof from \cite{munoz}  and \cite{cm2013} to give sufficient conditions to ensure that the solution $f(t,x)$ at the time $t$ of the pseudo-differential equation (HE) eventually becomes (and remains) a Morse function with distinct critical values for ``arbitrary" initial conditions. Until the end of the subsection all functions are assumed to be real valued. We start by recalling the concepts of Morse function and stabiliy for functions defined in a compact smooth manifold. Throughout this subsection we will use the following notation:
\begin{align*}
    L_\R^2(X):= \{f \in  L^2 (X) : \esp \esp \mathfrak{Im}(f)=0 \} \esp \esp \text{and} \esp \esp  C^\infty_\R (X):= \{f \in  C^\infty (X) : \esp \esp \mathfrak{Im}(f)=0 \}.
\end{align*}

\begin{defi}\normalfont
Let $\Omega$ be a smooth manifold. A smooth real-valued function on $\Omega$ is a Morse function if it has no degenerate critical points.
\end{defi}

\begin{defi}\normalfont
Let $\Ombar$ be a compact smooth manifold and let $f \in C^\infty (\Ombar)$. Then $f$ is said to be stable if there exist a neighbourhood $W_f$ of $f$ in the Whitney $C^\infty$ topology such that for each $f' \in W_f$ there exist diffeomorphisms $g,h$ such that the following diagram commutes 

\begin{center}
\begin{tikzcd}
\Ombar \arrow{r}{f} \arrow[swap]{d}{g} & \R \arrow{d}{h}  \\%
\Ombar \arrow{r}{f'}& \R 
\end{tikzcd}
\end{center}
\end{defi}

The corollary to the following fundamental theorem gives a simple characterization of stable functions which will be the key to what follows. See \cite[pp. 79-80]{stablemappigs}.

\begin{teo}[Stability theorem] Let $\Ombar$ be a smooth compact manifold and let $f \in C^\infty (\Ombar)$. Then $f$ is a Morse function with distinct critical values if and only if it is stable. 
\end{teo}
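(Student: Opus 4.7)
The plan is to handle the two implications of the Stability Theorem separately, leaning on the Morse lemma, the implicit function theorem, and a partition-of-unity/gradient-flow patching argument.

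For the forward direction, suppose $f$ is Morse with distinct critical values $c_1 < \dots < c_N$ at points $p_1, \dots, p_N$ (finite in number by compactness of $\Ombar$). The Morse lemma provides coordinates near each $p_i$ in which $f(x) = c_i + \sum_{j \leq k_i}(-x_j^2) + \sum_{j > k_i} x_j^2$. Non-degeneracy of the Hessian and the inequalities $c_i \neq c_j$ are open conditions in the $C^2$-topology, so any $f'$ sufficiently close to $f$ in the Whitney $C^\infty$-topology has exactly $N$ Morse critical points $p_i'$ (produced by solving $df' = 0$ via the implicit function theorem) with the same Morse indices and with critical values $c_i'$ preserving the original ordering. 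Choose an increasing diffeomorphism $h \colon \R \to \R$ with $h(c_i') = c_i$. Near each $p_i$, both $f$ and $h \circ f'$ reduce, after a further application of the Morse lemma, to the same non-degenerate quadratic form, producing a local diffeomorphism $g_i$ satisfying $f \circ g_i = h \circ f'$. Away from the critical set $f$ is a submersion, and the gradient flow of $f$ can be used to transport $f'$ to $f$. A controlled partition of unity blends the local diffeomorphisms $g_i$ with the flow-based piece into a single global diffeomorphism $g$ of $\Ombar$ realising $h \circ f' = f \circ g$.

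For the backward direction, I would argue contrapositively. If $f$ has a degenerate critical point $p$, then for a generic linear perturbation $\ell$ the function $f + \epsilon \ell$ has, near $p$, only non-degenerate critical points whose total number (possibly zero) differs from one. Since the number of critical points is an invariant of the right-left equivalence relation defining stability, $f$ cannot belong to any Whitney neighbourhood lying inside its equivalence class. Similarly, if two critical points of $f$ share the same value $c$, an arbitrarily small perturbation separates their images, and this changes the topology of the sublevel set $\{f \leq c\}$, which is also invariant under left-right equivalence, so $f$ is again unstable.

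The hardest part is the patching in the forward direction: gluing the Morse normal forms near each $p_i$ to the gradient-flow transport on the complement into a single $C^\infty$-diffeomorphism $g$ of $\Ombar$ requires careful estimates at the interfaces. A cleaner way to proceed is through infinitesimal stability: one shows that the linearised equation $\dot f = v(f) + w \circ f$, for a vector field $v$ on $\Ombar$ and a vector field $w$ on $\R$, is solvable for every $\dot f \in C^\infty(\Ombar)$ precisely under the Morse plus distinct-critical-values hypothesis, and then integrates the resulting vector fields via Thom--Mather type arguments to obtain the one-parameter families of diffeomorphisms realising the equivalence.
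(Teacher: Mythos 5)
The paper does not prove this theorem: it is stated as a quoted classical result with the citation ``See \cite[pp. 79-80]{stablemappigs}'' (Golubitsky--Guillemin, \emph{Stable Mappings and Their Singularities}), and the paper uses only its corollary. So there is no in-paper proof against which to match the proposal; the comparison must be against the cited source, which establishes the theorem via the infinitesimal-stability criterion and Thom--Mather integration, exactly the ``cleaner way'' you name in your final paragraph.

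Your sketch of the forward direction has the right skeleton (implicit function theorem to track critical points, reparametrisation $h$ of the target, Morse lemma normal forms giving local conjugating diffeomorphisms $g_i$), and you correctly identify that the real work is gluing the $g_i$ to the flow transport on the complement. That step is not a routine partition-of-unity argument: a convex combination of diffeomorphisms need not be a diffeomorphism, and the gradient-flow piece does not automatically extend across the boundary of the Morse charts compatibly with the prescribed local conjugations. The classical resolution is precisely not to attempt a direct patch but to pass to the linearised problem $\dot f = v(f) + w\circ f$ and integrate; your proposal concedes this, so the forward direction as written has a genuine gap that the alternative route you mention would close. In the backward direction there is some imprecision: since a right-left equivalence $f\circ g = h\circ f'$ with $g,h$ diffeomorphisms sends critical points to critical points and preserves non-degeneracy of the Hessian, the simplest invariant is ``all critical points non-degenerate,'' and Morse functions are dense, so any $f$ with a degenerate critical point is immediately unstable without a count of how many critical points the perturbation produces. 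Similarly, for coincident critical values the cleaner invariant is the \emph{number of distinct critical values} (preserved because $h$ is a bijection of $\R$), not ``the topology of the sublevel set $\{f\leq c\}$'' at a fixed $c$, since $h$ reparametrises the levels and only the family of diffeomorphism types of sublevel sets, not each one at a fixed height, is preserved. These are fixable, but the forward-direction gluing is the substantive gap.
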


\begin{coro}
If $\Ombar$ is a smooth compact manifold and $f$ is a Morse function with distinct critical values, then there exists a neighborhood of $f$ in the $C^\infty$ topology such that $g$ is a Morse function with distinct critical values and the same number of critical points as $f$ for all $g$ in that neighborhood. In particular since $\Ombar$ is compact, there exist $r$ and $\varepsilon>0$ such that $g$ is a Morse function with distinct critical values and the same number of critical points as $f$ whenever $\norm{f-g}_{C^r (\Ombar)} < \varepsilon$ with $\norm{\cdot}_{C^r (\Ombar)}$ being a fixed norm for the $C^r$ topology.
\end{coro}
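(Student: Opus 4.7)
The plan is to deduce the corollary directly from the Stability Theorem together with two purely formal observations: that the diagrammatic equivalence of smooth functions preserves the Morse property, the number of critical points, and the distinctness of critical values; and that on a compact manifold the Whitney $C^\infty$ topology is metrisable by the $C^r$-seminorms. First I would invoke the Stability Theorem to conclude that, since $f$ is a Morse function with distinct critical values, $f$ is stable. By the very definition of stability recalled in Definition 5.28, this produces a neighbourhood $W_f$ of $f$ in the Whitney $C^\infty$ topology such that every $g \in W_f$ fits into a commutative square $h\circ f=g\circ \varphi$ for some diffeomorphisms $\varphi\colon\Ombar\to\Ombar$ and $h\colon\R\to\R$, i.e. $g=h\circ f\circ \varphi^{-1}$.

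The key step would then be to verify that this representation transfers all the relevant qualitative properties from $f$ to $g$. Since $\varphi$ is a diffeomorphism of $\Ombar$, the set of critical points of $g=h\circ f\circ\varphi^{-1}$ is exactly $\varphi(\mathrm{Crit}(f))$, so $g$ has the same (finite) number of critical points as $f$. At a critical point $x_0=\varphi(p_0)$ the Hessian of $g$ is conjugate via $d\varphi_{p_0}$ and multiplied by $h'(f(p_0))\neq 0$ to the Hessian of $f$ at $p_0$, so non-degeneracy is preserved and $g$ is a Morse function. Finally, the critical values of $g$ are the images under the diffeomorphism $h$ of the critical values of $f$; because $h$ is injective, distinctness of critical values is preserved. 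This finishes the first assertion.

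For the second assertion I would use that $\Ombar$ is compact: on compact manifolds the Whitney $C^\infty$ topology coincides with the topology induced by the countable family of $C^r$-norms, so a fundamental system of neighbourhoods of $f$ is given by the balls $\{g\in C^\infty(\Ombar): \norm{f-g}_{C^r(\Ombar)}<\varepsilon\}$ for $r\in\N_0$ and $\varepsilon>0$. Applying this to the neighbourhood $W_f$ produced above yields some $r$ and $\varepsilon>0$ with $\{g: \norm{f-g}_{C^r(\Ombar)}<\varepsilon\}\subseteq W_f$, which is exactly the quantitative statement. The only mildly subtle point is the Hessian conjugation argument, since nondegeneracy at a critical point is a coordinate-free statement and one must either work in a chart or note that the bilinear form $\mathrm{Hess}_{x_0}(g)$ is intrinsically defined precisely because $dg_{x_0}=0$; after that, the chain rule together with $h'\neq 0$ does all the work.
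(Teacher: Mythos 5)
Your proof is correct and takes essentially the same route as the paper, which states this corollary as an immediate consequence of the Stability Theorem and simply cites the textbook rather than giving a separate argument. Your verification that left--right equivalence by diffeomorphisms preserves the Morse property, the critical-point count, and the distinctness of critical values, followed by the observation that on a compact manifold the Whitney $C^\infty$ topology is generated by the $C^r$-norms, is exactly the standard bookkeeping that the cited reference supplies.
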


Now with this we can extend Lemma 2.1 in \cite{munoz} to pseudo-differential operators using the same scheme of proof that the authors in that paper. 

\begin{lema}
Let $\Ombar$ be a smooth compact manifold, and let $T_\sigma: L^2_\R (\Ombar) \to L^2_\R (\Ombar)$ be a linear operator acting on real valued functions, with the property that solutions to (HE) are in $C^\infty_\R (\Ombar)$. Suppose that the following conditions hold:

\begin{enumerate}
    \item[(i)] Eigenfunctions of $T_\sigma$ constitute a Schauder basis of $L_\R^2(\Ombar)$, and belong to $C^\infty_\R (\Ombar)$, 
        \item[(ii)] There exists $m \in \N_0$ and a basis $\mathcal{B}=\{\varphi_j\}$ of the direct sum of the first $m+1$ $\chi_j$-spaces $E_j$ $$ \Lambda_m :=\bigoplus_{0 \leq j \leq m} E_j \esp, \esp \esp dim(E_j) := d_j,$$ with the following property: the set $B$ of $l$-tuples   $(c_1,...,c_{l}) \in \R^l$, $l:= d_0 + ...+d_m$,  such that $\sum_j c_j \varphi_{j}$ is a Morse function with distinct critical values and $n$ critical points (for some $n$) is an open dense subset of $\R^{l}$. If constant functions are in some of the first $\chi_j$-spaces then the condition must hold with $\mathcal{B}$ basis of the orthogonal complement of constant functions in the direct sum of the first $m+1$ $\chi_j$-spaces,
        \item[(iii)] If the sequence $(\chi_j)_{j \in \N}$  is arranged in such a way that $j \leq k$ implies $\chi_j \leq \chi_k$, then $\chi_j$ grow at least as $j^{\varepsilon}$ for some $\varepsilon>0$, and $\chi_j>0$ for $j > m$. 
    \item[(iv)] For each $f \in C^\infty_\R (\Ombar)$ and every $r \in \N$ there exist $N,C$ such that the projection $h_j = \pi_j(f)$ of $f$ into the $j$-th eigenspace satisfies $$\norm{h_j}_{C^r (\Ombar)} \leq C (1+j^{N(r)}).$$
\end{enumerate}

Then there exist a set $S \subset L^2_\R (\Ombar)$, that is dense and open in the $L^2$ topology, such that for any initial condition $f_0 \in S$ if $f(t,x)$ is the corresponding solution to the equation $$\frac{\partial f}{ \partial t} + T_\sigma f= 0, \esp \esp f(0,x) = f_0,$$  on $\Ombar$ at time $t$, then there exist $T>0$ such that for $t \geq T$, $f(t,x)$ is a Morse function with distinct critical values on $\Ombar$ and $n$ critical points.
\end{lema}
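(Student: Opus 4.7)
The plan is to adapt the argument of \cite{munoz, cm2013} to this pseudo-differential setting. I would split the solution into its projection onto $\Lambda_m$ and a high-frequency tail, show that the tail decays in every $C^r$-norm, identify the asymptotic $C^r$-shape of the (rescaled) low-frequency part in $\Lambda_m$, and then invoke the stability of the Morse property established in Corollary 5.29.

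Concretely, write $f_0 = \sum_j \pi_j f_0$ in the eigenspace projections provided by (i), so that the solution splits as
\[
f(t,x) = g(t,x) + h(t,x), \quad g(t,x) := \sum_{j \leq m} e^{-\chi_j t}\pi_j f_0(x), \quad h(t,x) := \sum_{j > m} e^{-\chi_j t}\pi_j f_0(x),
\]
with $g(t,\cdot) \in \Lambda_m$. Hypothesis (iv) applied to $f_0$ gives $\norm{\pi_j f_0}_{C^r(\Ombar)} \leq C(1+j^{N(r)})$; combining with $\chi_j \geq c\,j^\varepsilon$ for $j>m$ from (iii), I obtain
\[
\norm{h(t,\cdot)}_{C^r(\Ombar)} \leq C\sum_{j > m} e^{-cj^\varepsilon t}\bigl(1+j^{N(r)}\bigr),
\]
which decays super-exponentially, as does $e^{\chi_* t}\norm{h(t,\cdot)}_{C^r(\Ombar)}$ for any fixed $\chi_* \leq \chi_m$. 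Setting $\chi_* := \min_{0 \leq j \leq m}\chi_j$ and using that $\Lambda_m$ is finite-dimensional (all norms equivalent), one has
\[
e^{\chi_* t} g(t,\cdot) \;=\; P_* f_0 + \sum_{\substack{j \leq m \\ \chi_j > \chi_*}} e^{-(\chi_j - \chi_*)t}\pi_j f_0 \;\xrightarrow[t\to\infty]{C^r(\Ombar)}\; P_* f_0,
\]
where $P_* f_0$ is the projection of $f_0$ onto the dominant subspace $\bigoplus_{\chi_j = \chi_*} E_j$; combined with the tail estimate, $e^{\chi_* t}f(t,\cdot) \to P_* f_0$ in every $C^r(\Ombar)$.

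Finally define
\[
S := \{f_0 \in L^2_\R(\Ombar) : \text{the $\mathcal{B}$-coordinates of $P_* f_0$ (extended by zero to $\R^l$) lie in $B$}\},
\]
with the convention from (ii) that constants are stripped when the dominant subspace contains them. Since $B$ is open and dense in $\R^l$ and the map $f_0 \mapsto $ $\mathcal{B}$-coordinates of $P_* f_0$ is a continuous linear surjection onto the relevant subspace, $S$ is open and dense in $L^2_\R(\Ombar)$, provided the restriction of $B$ to the dominant subspace is itself open and dense there. For $f_0 \in S$, $P_* f_0$ is a Morse function with distinct critical values and exactly $n$ critical points, so by Corollary 5.29 there is a $C^r$-neighbourhood $W$ of $P_* f_0$ in which the same property holds. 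Choose $T$ so that $e^{\chi_* t}f(t,\cdot) \in W$ for $t \geq T$; since multiplication by the positive scalar $e^{-\chi_* t}$ preserves non-degeneracy of critical points, their number, and the distinctness of critical values, $f(t,\cdot) = e^{-\chi_* t}\bigl(e^{\chi_* t}f(t,\cdot)\bigr)$ is itself a Morse function with distinct critical values and $n$ critical points for every $t \geq T$. The main obstacle is the genericity step — checking that $B$ restricted to the dominant subspace is open and dense — which is automatic when all $\chi_j$ with $j \leq m$ coincide (so the dominant subspace equals $\Lambda_m$) and in general is ensured when the non-Morse locus is a proper subvariety of $\Lambda_m$ that does not contain the dominant subspace.
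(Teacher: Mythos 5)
Your proposal takes a genuinely different route from the paper, and in one respect it is cleaner. The paper's proof defines $S$ via the projection onto all of $\Lambda_m$, and then estimates
\[
\norm{f(t,\cdot) - H_m(t,\cdot)}_{C^r(\Ombar)} \to 0, \qquad H_m(t,\cdot) := \sum_{j=0}^m e^{-\chi_j t} h_j,
\]
and appeals to Corollary 5.30 to conclude. But that corollary gives a $C^r$-neighbourhood with a \emph{fixed} radius around a \emph{fixed} Morse function, whereas $H_m(t,\cdot)$ is a moving target: if some $\chi_j$ with $j\leq m$ is strictly positive (or the dominant one dominates strictly), $H_m(t,\cdot)$ degenerates as $t\to\infty$ and its Morse radius shrinks to zero, so the raw $C^r$-convergence to $H_m(t,\cdot)$ does not by itself yield the conclusion. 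Your rescaling by $e^{\chi_* t}$, $\chi_* := \min_{j\leq m}\chi_j$, replaces the moving target by the fixed function $P_* f_0$, makes the invocation of the stability corollary legitimate, and then transports the Morse data back through the positive scalar $e^{-\chi_* t}$. This is the step the paper's argument actually needs and does not carry out explicitly.

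The trade-off, which you correctly identify as the main obstacle, is that the rescaled limit $P_* f_0$ lives in the dominant subspace $\bigoplus_{\chi_j = \chi_*} E_j$, not in all of $\Lambda_m$, while hypothesis (ii) only provides openness and density of the Morse locus $B$ with respect to the full basis $\mathcal{B}$ of $\Lambda_m$. Consequently your $S$ is not the same set as the paper's $S$, and the openness/density of $S$ in $L^2_\R(\Ombar)$ hinges on $B$ remaining open and dense after restriction to the coordinate subspace corresponding to the dominant eigenvalue, which is not formally guaranteed by (ii). When all $\chi_j$ with $j\leq m$ coincide — which is the case in every example the paper treats (Examples 5.33, 5.34, 5.37, 5.38) and is evidently the intended regime — the dominant subspace equals $\Lambda_m$, your $S$ coincides with the paper's $S$, and both arguments go through; in the general case both proofs need the restriction of the non-Morse locus to be thin, a point the paper leaves implicit and you make explicit. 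So: your decomposition and estimate are essentially the paper's, but the rescaling step is a genuine improvement, and the residual genericity caveat you flag applies to the paper's argument as well.

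Two small notational points: the corollary you want is 5.30, not 5.29; and you should check that $\chi_*>0$ is not needed for your final step (you only use that $e^{-\chi_*t}$ is a positive scalar, which is fine even if $\chi_*\leq 0$, since the Morse property with distinct critical values and the critical point count are invariant under multiplication by a nonzero constant).
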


\begin{proof}
Without loss of generality, we will consider the case when there are no constant functions in the first $\chi_j$-spaces. Let $S$ be the set of functions $f\in L^2_\R (\Ombar)$ whose projection onto the direct sum of the first $m+1$ $\chi_j$-eigenspaces is a Morse function, with distinct critical values, and $n$ critical points. Let $f \in S$. Let $P$ be the orthogonal projection into the subspace $\Lambda_m$, and $P^\bot$ the projection into its orthogonal complement $\Lambda_m^\bot$. Since the norms $$ || f||_{L^2_\R (\Ombar)} \esp \esp \text{and} \esp \esp ||Pf||_{L^2_\R (\Ombar)} + ||P^\bot f||_{L^2_\R (\Ombar)} $$ are equivalent, is clear that functions in a sufficiently small neighbourhood $U$ of $f$ in the $L^2$ topology will have their coefficients (with respect to any fixed basis of the direct sum of the first $m+1$ $\chi_j$-eigenspaces ) as close as desired to those of the projection of $f$ into the direct sum of the first $m+1$ $\chi_j$-eigenspaces. Then if we take a neighbourhood $U$ of $f \in S$ small enough, by condition $(ii)$ we have that $U \subset S$, hence $S$ is open. Now let $f \in L^2_\R (\Ombar)$, let $\pi_j$ be the projection onto the $\chi_j$-space, and let $g$ be obtained from $f$ such that $\pi_j (g) = \pi_j (f)$ for $j \geq m+1$ and $\pi_j (g)$ comes from slightly modifying the coefficients of each $\pi_j (f)$ with respect to $\mathcal{B}$ so that $\sum_{j=0}^m  \pi_j (g)$ is a Morse function with distinct critical values and $n$ critical points (this is again possible by condition $(ii)$). If the modification is slight enough, $g$ will be as close as desired to $f$ in $L^2_\R(\Ombar)$. Thus $S$ is dense. Next we check that if $f \in S$ and $f(t,x) = h_0 + h_1 + ...$ with $h_k = \pi_k (f)$, then $f = h_0 + e^{-\chi_1 t} h_1+...$ is a Morse function with distinct critical values and $n$ critical points for large $t$. By Corollary 5.30 it is enough to prove that for each $r$ $$\norm{f(t,\cdot) - H_m }_{C^r (\Ombar)} \to 0 \esp \esp \text{as} \esp \esp t \to \infty \esp \esp \text{where} \esp \esp H_m :=\sum_{j=0}^m e^{-\chi_j t} h_j,$$ and $m$ is as in the hypothesis $(ii)$. For fixed $t$ one has

\begin{align*}
    \norm{f(t,\cdot) - H_m}_{C^r (\Ombar)} &= \norm{e^{- \chi_{m+1} t} h_{m+1} + e^{-\chi_{m+2} t} h_{m+2} + ...}_{C^r (\Ombar)} \\ &= e^{-\chi_{m+1} t} \norm{\sum_{j=m+1}^{\infty} e^{(\chi_{m+1} - \chi_j)t} h_j}_{C^r (\Ombar)} \\ & \leq e^{-\chi_{m+1}t}  \sum_{j=m+1}^{\infty} e^{(\chi_{m+1} - \chi_j)t} \norm{h_j}_{C^r (\Ombar)} \\ & \leq C e^{-\chi_{m+1}t}  \sum_{j=m+1}^{\infty} e^{(\chi_{m+1} - \chi_j)t} (1+j^{N(r)}).
\end{align*}

In virtue of (3) $\chi_j$ grow at least as $j^{\varepsilon}$ for some $\varepsilon>0$, so the series $$ C e^{-\chi_{m+1}t}  \sum_{j=m+1}^{\infty} e^{(\chi_{m+1} - \chi_j)t} (1+j^{N(r)}),$$ is clearly convergent and a decreasing function of $t$. Since the first factor tends to zero as $t \to \infty$ the proof is complete. 
\end{proof}

\begin{rem}
The above lemma is about smooth functions on compact smooth manifolds, with (smooth) boundary or without any boundary (closed manifolds). However, we have used the notation $\Ombar$ here to suggest that it can be applied in the setting of the non-harmonic analysis, but for this it is necessary to have the smoothness of the boundary $\partial \Omega$, and the condition $C^\infty_\MO (\Ombar) \subseteq C^\infty (\Omega)$.   
\end{rem}

The motivation for Lemma 5.31 is the fact that the solutions of the heat equation in a wide class of manifolds become minimal Morse functions with distinct critical values. This is Lemma 2.1 in \cite{munoz} where in particular the cases $\mathbb{RP}^d$ and $\mathbb{CP}^d$ were treated. See \cite{cm2013} for the cases $\mathbb{S}^d$ and $\Td$. In our setting the case $\Td$ correspond to the periodic boundary value problem associated to the Laplacian. 

We note that, in order to apply Lemma 5.31, it is necessary to ensure three things: first, eigenvalues of $T_\sigma$ grow at a reasonable rate, second, Morse functions are dense in the first non-trivial eigenspaces, and third, the $C^r$-norm of the projection of a function in each $\chi_j$-eigenspace is bounded by some polynomial in $j$. The Laplacian is particulary nice because it is self-adjoint and its eigenvalues are well known in many cases. Moreover, on some manifolds as in those examples given in Section 2 there exist enough informaton about the basis of the first non-trivial eigenspace, and about the basis of each eigenspace. However, for more general operators it is a non-trivial problem to obtain information about its eigenfunctions, but one can use the spectrum localisation achieved by Theorem 5.13 to give at least one of the necessary conditions, in some cases. Now we give some examples where Lemma 5.31 can be applied:

\begin{exa}
Eigenfunctions of the model operator in Examples 2.1 and 2.7 coincide with the eigenfunctions of Laplace operator in $\Td$ and $\mathbb{S}^2$, respectively. In \cite{cm2013} the authors show that Lemma 5.31 applies for the heat equation and its solutions become and remain as minimal Morse functions, but much more can be said. Since we know how eigenfunctions of Fourier multipliers should be, then we can check (more or less easily in some cases)  if Lemma 5.31 applies for the equation determined by a given Fourier multiplier. For example, let $\sigma: \Z^d \to \R$ be a positive function that grows at least linearly and takes its minimum value only in integer vectors $\xi \in \Z^d $ of the form $\xi = k e_j$. Then, by the same arguments as in \cite{cm2013}, the solution $f_t$ at the time $t$ for the Cauchy problem $$ \frac{\partial f}{\partial t} + T_\sigma f = 0, \esp \esp \esp f(0,x) = f_0 ,$$ on the torus becomes and remains a Morse function with distinct critical values in view of Lemma 5.31.
\end{exa}

\begin{exa}
Consider the equation $$ \frac{\partial f}{ \partial t}+ \MO_k f = 0 , \esp \esp f(0,x) = f_0 ,$$ where $$\MO_k:= \frac{\partial^{2k}}{\partial x^{2k}} + \frac{\partial^{2k}}{\partial y^{2k}},$$in the Möbius strip with the Dirichlet boundary conditions (Example 2.9). A function in the first non trivial eigenspace associated to the operator $\MO_k$ has the form  $$h(x,y)= c \sin{(x/2)} \sin (2y),$$ which is Morse for $c \neq 0$. Each eigenspace is one-dimensional and the $C^{r}$ norm of the projection of a smooth function onto the $j$-th eigenspace is bounded by a constant times $1+j^r$. 
\end{exa}

\begin{rem}
We have used the fact that functions in Examples $5.33$ and $5.34$ can be identified with functions on a compact smooth manifold, but Lemma 5.31 works in a wider class of domains. To see this consider the Example 2.3. Since the domain in consideration is $(0 , 2 \pi)^d$ at first appearance Lemma 5.31 does not apply, but one can see for the model operator that, after ordering the eigenvalues in non-decreasing order, functions in each $\chi_j$-space are very similar to the functions in the $\chi_j$-space of the model operator in Example 5.33. So, it is reasonable to think that this difficulty can be avoided. Certainly the eigenfunctions of $\MO_{h,d}$ and consequently their linear combinations can be extended to a larger domain (a ball for example) containing $(0 , 2 \pi)^d$. We can choose this domain in such way that it is a compact smooth manifold with boundary where Lemma 5.31 applies. Moreover, since critical points of a Morse function in a compact smooth manifold are finite then we can choose an extended domain where the extension of the functions have the same number of critical points as the original functions, for example in a cube of rounded corners tight to $(0,2 \pi)^d$. This observation is the motivation of the following corollary of Lemma 5.31.
\end{rem}
\begin{coro}
 Let $\Omega \subseteq \R^d$ be an open set, and let $T_\sigma: L^2_\R (\Ombar) \to L^2_\R (\Ombar)$ be a linear operator acting on real valued functions, with the property that solutions to (HE) are in $C^\infty_\R (\Ombar)$, and such that its eigenfunctions $w_j$ are smooth in $\Omega$ and form a Schauder basis of $L_\R^2 (\Omega)$. Assume that the corresponding eigenvalues grow at least linearly, and suppose that there exists a open subset $\Omega' \subset \R^d$ such that:

\begin{enumerate}
    \item[(i)] $\Omega \subset \Omega'$ and $\overline{\Omega'}$ is a compact smooth manifold with boundary.
    \item[(ii)] Each eigenfunction $w_j$ of the operator $T_\sigma$ extends to a smooth function $w_j'$ in $\overline{\Omega'}$.
    \item[(iii)] There exists $m \in \N_0$ and a basis $\mathcal{B}=\{\varphi_j\}$ of the direct sum of the first $m+1$ $\chi_j$-spaces $E_j$ $$ \Lambda_m :=\bigoplus_{0 \leq j \leq m} E_j \esp, \esp \esp dim(E_j) := d_j,$$ with the following property: the set $B$ of $l$-tuples   $(c_1,...,c_{l}) \in \R^l$, $l:= d_0 + ...+d_m$,  such that $\sum_j c_j \varphi_{j}$ extend to a Morse function in $\Ombar'$ with distinct critical values and $n$ critical points (for some $n$) is an open dense subset of $\R^{l}$. If constant functions are in some of the first $\chi_j$-spaces then the condition must hold with $\mathcal{B}$ basis of the orthogonal complement of constant functions in the direct sum of the first $m+1$ $\chi_j$-spaces,
    \item[(iv)] For each function $f$ in the $\chi_j$-eigenspace and every $r \in \N$ there exist $N,C$ such that  $$||f||_{C^r (\overline{\Omega'})} \leq C (1+j^{N(r)}).$$
    \end{enumerate}
Then there exist a set $S \subset L^2_\R (\Omega)$ that is open and dense in $L^2_\R (\Omega)$ in the $L^2$ topology such that, for any initial condition $f_0 \in S$, if $f(t,x)$ is the corresponding solution to the equation
$$ \frac{\partial f}{ \partial t}+T_\sigma f = 0, \esp \esp f(0,x) = f_0,$$  on $\Omega$ at time $t$, then there exist $T>0$ such that for $t \geq T$, $f(t,x)$ is a Morse function with distinct critical values on $\Omega$.
\end{coro}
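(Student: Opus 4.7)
The plan is to lift the evolution problem on $\Omega$ to the compact smooth manifold with boundary $\overline{\Omega'}$ via the smooth extensions $w_j'$ of the eigenfunctions, carry out the argument of Lemma 5.31 in the extended setting, and then restrict the conclusion to $\Omega$. For $f_0 = \sum_j f_j w_j \in L^2_\R(\Omega)$, whose solution of (HE) is $f(t,x) = \sum_j e^{-\chi_j t} f_j w_j(x)$, I would define the lifted trajectory $\widetilde{f}(t,x) := \sum_j e^{-\chi_j t} f_j w_j'(x)$ on $\overline{\Omega'}$. By hypotheses (ii) and (iv), together with the at-least-linear growth of $\chi_j$, the exact telescoping argument used in the proof of Lemma 5.31 produces
$$\bigl\|\widetilde{f}(t,\cdot) - \widetilde{H}_m(t,\cdot)\bigr\|_{C^r(\overline{\Omega'})} \longrightarrow 0 \quad\text{as}\quad t \to \infty,$$
where $\widetilde{H}_m(t,\cdot) := \sum_{j \leq m} e^{-\chi_j t} f_j w_j'$ is the lifted projection onto the direct sum of the first $m+1$ extended eigenspaces.

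Next I would define $S \subset L^2_\R(\Omega)$ as the set of $f_0$ whose first $m+1$ coefficients, expressed in the basis $\mathcal{B}$ from hypothesis (iii), lie in the open dense subset $B \subset \R^l$. Openness of $S$ follows from openness of $B$ together with the continuity of the coordinate map on the finite-dimensional space $\Lambda_m$ (all norms being equivalent there); density follows by arbitrarily small perturbations of only those first $l$ coefficients of any given $f_0 \in L^2_\R(\Omega)$, exactly as in Lemma 5.31.

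Finally, for $f_0 \in S$, hypothesis (iii) together with Corollary 5.30 and the $C^r$-convergence above yields that $\widetilde{f}(t,\cdot)$ is a Morse function on $\overline{\Omega'}$ with distinct critical values and $n$ critical points for all sufficiently large $t \geq T$. To descend to $\Omega$, any critical point $x \in \Omega$ of the restriction $f(t,\cdot) = \widetilde{f}(t,\cdot)|_\Omega$ is also a critical point of $\widetilde{f}(t,\cdot)$ with the same Hessian, hence non-degenerate, and its value belongs to the set of distinct critical values of $\widetilde{f}(t,\cdot)$, so $f(t,\cdot)$ is a Morse function on $\Omega$ with distinct critical values. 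The principal obstacle, to my mind, is the verification that $\widetilde{H}_m(t,\cdot)$ remains Morse with the required properties throughout the relevant time interval rather than only at $t=0$; this is the same issue already present in Lemma 5.31 and is handled by the structure of the first $m+1$ eigenspaces (in particular by the fact that $\chi_j > 0$ only for $j > m$) combined with Corollary 5.30.
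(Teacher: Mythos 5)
Your proposal takes essentially the same route as the paper: define $S$ by the projection-onto-$\Lambda_m$ condition (via the open dense set $B$ from hypothesis (iii)), lift the evolution to $\overline{\Omega'}$ using the smooth extensions $w_j'$, run the $C^r$-convergence argument of Lemma 5.31 there, and restrict back to $\Omega$ by noting that critical points and Hessians of the restriction coincide with those of the extension. The paper's own proof is a single sentence appealing to ``as before,'' so your fuller unpacking---including the explicit descent step and the observation that the time-dependence of $\widetilde{H}_m$ is an issue already inherited from Lemma 5.31 rather than introduced here---is a faithful elaboration of the same argument.
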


\begin{proof}
Let $S$ be the set of functions $f \in L^2_\R (\Omega)$ whose projection onto the direct sum of the first $m+1$ $\chi_j$-spaces $E_0 \oplus .. \oplus E_m$  extend to a Morse function in $\Ombar'$. Then, as before, $S$ is dense and open in the $L^2_\R (\Omega)$ topology, and if $f_0 \in S$ then $f(t,x)$ is eventually very close to a Morse function with distinct critical values.
\end{proof}

\begin{exa}
In Example 2.3 the first eigenspace of the operator $\MO_{h,d}$ is not trivial but the gradient of every non-zero function is non-zero in $(0,2 \pi)^d$ so, we have to consider the direct sum of the first two eigenspaces, let us call them $E_0$ and $E_1$. A function in $E_0 \oplus E_1$ can be extended to any subset or $\R^d$ containing $(0,2\pi)^d$ and can be written in the form \begin{align*}
    f(x) &= h^{x/2\pi}( a_0 + \sum_{j=1}^d a_j \cos (x_j) + b_j \sin (x_j) ),\\  
\end{align*}

thus \begin{align*}
    \frac{\partial f}{\partial x_j}(x)  &=\frac{\ln (h_j)}{2\pi}f(x)  + h^{x/2 \pi} \big(   b_j \cos(x_j) - a_j \sin (x_j)\big) \\ &= \frac{\ln (h_j)}{2\pi}f(x)  + h^{x/2 \pi} A_j \sin(x_j + \phi_j),
\end{align*}

where $$A_j = \sqrt{a_j^2 + b_j^2}, \esp \esp \phi_j = atan2(b_j , -a_j) ,$$

where $atan2$ is the two argument arctangent function, defined as the angle in the Euclidean plane, given in radians, between the positive x-axis and the ray to the point $(x,y)$. By the direct calculation 
\begin{align*}
    \frac{\partial^2 f}{\partial x_k \partial x_j} (x) =& \frac{\ln(h_k) \ln(h_j)}{4 \pi^2} f(x) +  \frac{\ln(h_k)}{2 \pi} h^{x/2 \pi} \big(   b_j \cos(x_j) - a_j \sin (x_j)\big) \\ & +\frac{\ln (h_j)}{2 \pi} h^{x/ 2 \pi} (b_k \cos (x_k) - a_k \sin(x_k) ) \\ & -\delta_{jk} h^{x/2\pi} (a_j \cos (x_j) + b_j \sin (x_j)) .
\end{align*}

Now let us suppose that $x^0$ is a critical point, then $$-\frac{\ln (h_j)}{2\pi}f(x^0) = h^{x^0/2\pi} \sin (x_j^0 + \phi_j), $$
 and 
\begin{align*}
    \frac{\partial^2 f}{\partial x_k \partial x_j} (x^0) &= \frac{\ln (h_j)}{2 \pi} h^{x^0/ 2 \pi} (b_k \cos (x_k^0) - a_k \sin(x_k^0) ) \\ & -\delta_{jk} h^{x^0/2\pi} (a_j \cos (x_j^0) + b_j \sin (x_j^0)).
\end{align*}
For the case $d=2$ we obtain

\begin{align*}
    &det (\mathbb{H}_f (x^0))\\ &= \Big( \Big(1 + \frac{\ln (h_1)}{2\pi}\Big)\Big( 1 + \frac{\ln (h_2)}{2\pi} \Big) - \frac{\ln(h_1) \ln(h_2)}{4 \pi^2} \Big)A_1 A_2 \sin (x_1^0 + \phi_1) \sin (x_2^0 + \phi_2),
\end{align*}

\noindent and from this we can see that the function $f$ is a Morse function if and only if $f(x^0) = 0$. To finish we just have to note that for any given function $f$ in $E_0 \oplus E_1$ a slight modification of the coefficients $a_j,b_j$ makes $f$ a Morse function, if it is not Morse yet. In summary, Lemma 5.31 applies in this case and in conclusion, there exist a dense set $S \subset L^2_\R (\Omega)$ such that for any $f_0 \in S$ the solution $f(t,x)$ in the time $t$ to the equation $$\MO_{h,2} f + \frac{\partial f}{\partial t}=0,  \esp \esp f(0,x) = 0 $$ become and remains as a Morse function with distinct critical values.  
\end{exa}

\begin{exa}
Consider Example 2.4. Let $\sigma : \N_0 \times \N_0 \to \R$  a positive function such that $\sigma$ takes its minimum value in a single point $(n_0,m_0) \in \N_0 \times \N_0$, $(n_0,m_0) \neq (0,0)$.  Then, for $f_0$ in a dense subset of $L^2_\R (\Omega)$ solutions to the equation $$ \frac{\partial f}{ \partial t} +T_\sigma =0, \esp \esp f(0,x) = f_0,$$ become and remain Morse function with different critical values and the same number of critical points as $$\cos\big( \frac{n_0 x}{a} \big) \sin \big( \frac{m_0 y}{b} \big).$$ 
\end{exa}

\section{{Gohberg's lemma}}

This section is dedicated to the proof of Gohberg's Lemma (Theorem 4.1) in the context of the non-harmonic analysis of boundary value problems.

\begin{proof}[Proof of Gohberg's lemma:]
Our proof consists of three parts.

\textbf{First:} since $\sigma (x , \xi)$ is bounded in $\Ombar$ then for each $\xi \in \I$ we can take a $x_\xi \in\Omega$ such that the value $|\sigma(x_\xi , \xi)|$ is arbitrarily close to $||\sigma(\cdot , \xi)||_{L^\infty (\Omega)}$. Now by definition of $d_\sigma$ we can take a subcollection $\{ (x_{\xi_k} , \xi_k) \}_{k \in \N}$ of $\{ (x_\xi , \xi)\}_{\xi \in \I}$ so that $$\lim_{k \to \infty} |\sigma (x_{\xi_k} , \xi_k)| = d_\sigma.$$ By the compactness of $\Ombar$ the collection $\{x_{\xi_k}\}$ must have an acumulation point $x_0$. This implies that each neigbourhood $V$of $x_0$ contain infinitely many points of $\{x_{\xi_k}\}$. Thus there exists a subsequence $\{x_{\xi_{k_l}}\}_{l \in \N}$ of points in the set $V$ that satisfy $$\lim_{l \to \infty} |\sigma (x_{\xi_{k_l}} , \xi_{k_l})| = d_\sigma.$$ For simplicity we will rename this sequence as the original $\{x_{\xi_k}\}$. Now let $\varepsilon$ be an arbitrary positive real number. Let us take $V_\varepsilon$ and $f \in C^\infty (\Omega)$ a smooth bounded bump function so that
$$|\sigma(x , \xi_k) - \sigma(x_{\xi_k} , \xi_k ) | < \frac{\varepsilon}{3} \esp \esp \text{for} \esp \esp x, x_k \in V_\varepsilon,$$  and $$f(x) =0 \esp \esp \text{for} \esp \esp x \notin V_\varepsilon \cap \Omega.$$ If we define $$f_k (x) := f(x) u_{\xi_k} (x),$$ then  \begin{align*}
    \norm{\sigma(x_{\xi_k} , \xi_k) f_k}_{L^2(\Omega)} - &\norm{\sigma(\cdot , \xi_{k}) f_k}_{L^2(\Omega)} \\
    &\leq \norm{\sigma(x_{\xi_k} , \xi_k) f_k - \sigma(x , \xi_k) f_k}_{L^2(\Omega)} \\ &= \Big( \int_{V_\varepsilon \cap \Omega} |\sigma(x_{\xi_k} , \xi_k) - \sigma(x , \xi_k)|^2|f_k (x)|^2 dx \Big)^{1/2} \\
    &\leq \frac{\varepsilon}{3} \norm{f_k}_{L^2(\Omega)} \tag{G1}.
\end{align*}

\textbf{Second:} we assert that the sequence $\{f_k\}_{k \in \N}$ converges to zero weakly. For this we just have to see that given any $g \in C^\infty_{\MO^*} (\Ombar)$ we have that
\begin{align*}
    \int_{\Omega} f(x) g(x) u_{\xi_k} (x) dx 
\end{align*}
is the complex conjugate of the $\MO$-Fourier coefficient $\widehat{h} (\xi_k)$ of the function $h = \overline{fg} \in L^2 (\Omega)$, and obviously $| \widehat{h} (\xi_k) | \to 0$ as $k \to \infty$. Hence for sufficiently large $k$ and any compact operator $K$, we have
\begin{align*}
    \norm{Kf_k}_{L^2(\Omega)} \leq \frac{\varepsilon}{3} \norm{f_k}_{L^2(\Omega)}, \tag{G2}
\end{align*}
because compact operators map weakly convergent sequences into strongly convergent sequences.

\textbf{Third:} we have the following lemma.

 \hfill

\begin{lema}$\esp \esp \norm{\sigma(\cdot , \xi_k)f_k - T_\sigma f_k}_{L^2(\Omega)} \to 0$ as $k \to \infty$.
\end{lema}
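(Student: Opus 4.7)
The natural first move is to recognise the left-hand side as a commutator applied to the eigenfunction $u_{\xi_k}$. Since $T_\sigma u_{\xi_k} = \sigma(\cdot,\xi_k)\, u_{\xi_k}$ by the $\MO$-quantisation and $f_k = f u_{\xi_k}$, one gets
$$\sigma(\cdot,\xi_k)\, f_k - T_\sigma f_k \;=\; f\, T_\sigma u_{\xi_k} - T_\sigma(M_f u_{\xi_k}) \;=\; -[T_\sigma, M_f]\, u_{\xi_k},$$
where $M_f$ denotes multiplication by the smooth cutoff $f$. Thus the problem reduces to showing that $\norm{[T_\sigma, M_f]\, u_{\xi_k}}_{L^2(\Omega)} \to 0$ as $k \to \infty$.

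I would then use the Schwartz kernel $K_\sigma$ of $T_\sigma$ to write
$$[T_\sigma, M_f]\, u_{\xi_k}(x) \;=\; \int_\Omega K_\sigma(x,y)\bigl(f(y) - f(x)\bigr)\, u_{\xi_k}(y)\, dy,$$
and Taylor-expand $f(y)-f(x)$ with respect to the strongly $\MO$-admissible family $\{q_j\}$ via Proposition 3.32. The $\alpha = 0$ contribution cancels; for $1 \le |\alpha| < N$, the identity $\int_\Omega K_\sigma(x,y)\, q^\alpha(x,y)\, u_{\xi_k}(y)\, dy = u_{\xi_k}(x)\, \Delta_q^\alpha \sigma(x,\xi_k)$ from the definition of the difference operators turns each such term into $\tfrac{1}{\alpha!}\, u_{\xi_k}(x)\, (D_y^{(\alpha)} f)(x)\, \Delta_q^\alpha \sigma(x,\xi_k)$. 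Because $\sigma \in S^0_{1,0}(\Ombar \times \I)$, we have the symbol estimate $|\Delta_q^\alpha \sigma(x,\xi_k)| \le C_\alpha \langle \xi_k\rangle^{-|\alpha|}$, so each of these finitely many terms has $L^2(\Omega)$-norm at most $C \langle \xi_k\rangle^{-|\alpha|} \norm{u_{\xi_k}}_{L^2(\Omega)}$, which tends to zero as $k \to \infty$.

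The hard part will be estimating the $N$-th order Taylor remainder, whose kernel is a finite sum of terms of the form $q^\alpha(x,y)\, K_\sigma(x,y)\, g_{N,\alpha}(y)/\alpha!$ with $|\alpha| = N$ and $g_{N,\alpha} \in C^\infty(\Omega)$. Morally, this is the integral kernel of an operator whose symbol lies in $S^{-N}_{1,0}(\Ombar \times \I)$; using the pointwise bound $\norm{u_{\xi_k}}_{L^\infty(\Omega)} \le C_b \langle \xi_k\rangle^{\mu_0}$ from Assumption (A), the summability provided by Assumption (B), and Cauchy--Schwarz, one expects the remainder applied to $u_{\xi_k}$ to have $L^2$-norm of order $\langle \xi_k\rangle^{\mu_0 - N}$. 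Choosing $N$ strictly larger than $\mu_0$ then forces this to tend to zero, and combined with the decay of the principal terms this gives $\norm{[T_\sigma, M_f]\, u_{\xi_k}}_{L^2(\Omega)} \to 0$, as required.
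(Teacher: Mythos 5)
Your proposal follows essentially the same route as the paper's proof: write $T_\sigma$ via its Schwartz kernel, Taylor-expand $f$ around the diagonal with respect to the strongly $\MO$-admissible family $\{q_j\}$ (the paper's Proposition 3.27), convert each term to $\tfrac{1}{\alpha!}\,D_y^{(\alpha)}f(x)\,\Delta_q^\alpha\sigma(x,\xi_k)\,u_{\xi_k}(x)$ via the definition of the difference operator, and invoke the symbol estimate from $\sigma\in S^0_{1,0}(\Ombar\times\I)$. The commutator reformulation $-[T_\sigma,M_f]u_{\xi_k}$ is a tidy repackaging rather than a different argument, and your choice $N>\mu_0$ for the remainder (coming from estimating $u_{\xi_k}$ in $L^\infty$) is slightly more wasteful than estimating each term's $L^2$-norm directly via $\|u_{\xi_k}\|_{L^2(\Omega)}=1$, which is what the paper effectively does and which makes any fixed $N\ge 1$ sufficient.
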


\hfill

If we assume the lemma for a moment then, for sufficiently large $k$ \begin{align*}
    \norm{\sigma(x, \xi_k)f_k }_{L^2(\Omega)} - \norm{T_\sigma f_k}_{L^2(\Omega)} \leq \frac{\varepsilon}{3} \norm{f_k}_{L^2(\Omega)}. \tag{G3}
\end{align*}

So, by (G1), (G2) and (G3) we get for sufficiently large $k$ that

\begin{align*}
    \norm{f_k}_{L^2(\Omega)} \norm{T_\sigma - K}_{\mathcal{L}(L^2(\Omega))} &\geq \norm{(T_\sigma - K)f_k}_{L^2(\Omega)} \\ & \geq \norm{T_\sigma f_k}_{L^2(\Omega)} - \norm{K f_k}_{L^2(\Omega)} \\ & \geq \norm{T_\sigma f_k}_{L^2(\Omega)} - \frac{\varepsilon}{3} \norm{f_k}_{L^2(\Omega)} \\ & \geq \norm{\sigma(\cdot , \xi_k) f_k}_{L^2(\Omega)} - \frac{2 \varepsilon}{3} \norm{f_k}_{L^2(\Omega)} \\ & \geq (|\sigma(x_{\xi_k} , \xi_k)| - \varepsilon) \norm{f_k}_{L^2(\Omega)}.
\end{align*}

Letting $k \to \infty$ we get $$\norm{T_\sigma - K}_{\mathcal{L}(L^2 (\Omega))} \geq d_\sigma - \varepsilon.$$
 Finally, using the fact that $\varepsilon$ is an arbitrary positive number, we have $$\norm{T_\sigma - K}_{\mathcal{L}(L^2 (\Omega))} \geq d_\sigma.$$ The proof is complete. 
\end{proof}

\begin{proof}[Proof of Lemma 6.1:]
In the distribution sense we can write 
\begin{align*}
    T_\sigma f (x) &= \sum_{\xi \in \I} \sigma(x,\xi) \widehat{f} (\xi) u_\xi (x)\\ &= \sum_{\xi \in \I} \int_\Omega \sigma(x,\xi) u_\xi(x) \overline{v_\xi (y)} f(y) dy \\ &= \int_\Omega \Big( \sum_{\xi \in \I}  \sigma(x,\xi) u_\xi(x) \overline{v_\xi (y)} \Big) f(y) dy \\ &=\int_\Omega K(x,y) f(y) dy,
\end{align*}

for any $f \in C^\infty_{\MO^*} (\Ombar)$. In particular for $f_k (y) := f(y) u_{\xi_k} (y)$ we have $$T_\sigma f_k (x) = \int_{\Omega} K(x,y) f(y) u_{\xi_k} (y) dy.$$ 

By Taylor's formula (Proposition 3.27) given any $x \in \Omega$, we have $$f(y) = \sum_{|\alpha|<N} \frac{1}{\alpha!} D^{(\alpha)}_y f(y)|_{y=x} q^\alpha (x,y) + \sum_{|\alpha|=N} \frac{1}{\alpha!} q^\alpha (x,y) f_N(x),$$ in some neighborhood of $x$. Thus

\begin{align*}
    T_\sigma f_k (x) &= f(x) \int_{\Omega} K(x,y) u_{\xi_k} (y) dy \\ & \esp \esp+ \sum_{|\alpha|=1}^N \frac{1}{\alpha!} D^{(\alpha)}_y f(y)|_{y=x}\int_{\Omega} K(x,y) q^\alpha (x,y)u_{\xi_k} (y) dy \\
    &   \esp \esp +\sum_{|\alpha|=N} \int_{\Omega} K(x,y) q^\alpha (x,y) f_N(x) \\ &= \sigma(x , \xi_k) f(x) u_{\xi_k} (x) + \sum_{|\alpha|=1}^N \frac{1}{\alpha!} D^{(\alpha)}_y f(y)|_{y=x} \Delta_q^\alpha \sigma (x, \xi_k) u_{\xi_k} (x) \\
    & \esp \esp + \Delta_{q^N} \sigma (x , \xi_k) u_{\xi_k} (x).
\end{align*}

Then we have 

\begin{align*}
    T_\sigma f_k(x) - \sigma(x , \xi_k) f_k (x) =& \sum_{|\alpha|=1}^N \frac{1}{\alpha!} D^{(\alpha)}_y f(y)|_{y=x} \Delta_q^\alpha \sigma (x, \xi_k) u_{\xi_k} (x) \\ & + \Delta_{q^N} \sigma (x , \xi_k) u_{\xi_k} (x).
\end{align*}

Finally, since $f \in C^\infty_{\MO^*} (\Ombar)$ and $\sigma \in S^0_{1,0} (\Ombar \times \I)$, it is clear that there exist constants $C_1>0$ and $C_2>0$ such that $$\Big|  \sum_{|\alpha|=1}^N \frac{1}{\alpha!} D^{(\alpha)}_y f(y)|_{y=x} \Delta_q^\alpha \sigma (x, \xi_k) u_{\xi_k} (x) \Big| \leq C_1 \langle \xi_k \rangle^{-1},$$and $$ |\Delta_{q^N} \sigma (x , \xi_k) u_{\xi_k} (x)| \leq C_2 \langle \xi_k \rangle^{-N}.$$This implies $$\norm{T_\sigma f_k - \sigma(\cdot , \xi_k) f_k}_{L^2(\Omega)} \to 0,$$ as $k \to \infty$, concluding the proof.
\end{proof}

\section*{{Acknowledgments}}
The first author thanks the support of Carlos Andres Rodriguez Torijano during the development of this work.
\nocite{*}
\bibliographystyle{acm}
\bibliography{main}

\end{document}